\numberwithin{equation}{section}
\newtheorem{Theorem}{Theorem}[section]
\newtheorem{Corollary}[Theorem]{Corollary}
\newtheorem{Lemma}[Theorem]{Lemma}
\newtheorem{Proposition}[Theorem]{Proposition}
\theoremstyle{definition}
\newtheorem{Remark}[Theorem]{Remark}
\newtheorem{Example}[Theorem]{Example}
\newtheorem{Definition}[Theorem]{Definition}
\numberwithin{equation}{section}
\DeclareMathAlphabet\mathbb{U}{msb}{m}{n}
\def\Mod{{{\sf Mod}}}
\def\C{{\mathcal C}}
\def\D{{\mathcal D}}
\def\F{{\mathcal F}}
\def\G{{\mathcal G}}
\def\lim{{\sf lim}}
\def\N{{\sf N\hspace{1pt}}}
\def\B{{\sf B\hspace{1pt}}}
\def\mono{\rightarrowtail}
\def\epi{\twoheadrightarrow}
\def\f{{\bf f}}
\def\r{{\bf r}}
\def\a{{\bf a}}
\def\b{{\bf b}}
\def\Z{{\mathbb Z}}
\def\O{{\mathcal O}}
\begin{document}

\title{Higher limits, homology theories and fr-codes}
\author{Sergei O. Ivanov}
\address{Chebyshev Laboratory, St. Petersburg State University, 14th Line, 29b,
Saint Petersburg, 199178 Russia} \email{ivanov.s.o.1986@gmail.com}

\author{Roman Mikhailov}
\address{Chebyshev Laboratory, St. Petersburg State University, 14th Line, 29b,
Saint Petersburg, 199178 Russia and St. Petersburg Department of
Steklov Mathematical Institute} \email{rmikhailov@mail.ru}

\maketitle

\begin{abstract}
This text is based on lectures given by authors in summer 2015. It
contains an introduction to the theory of limits over the category
of presentations, with examples of different well-known functors
like homology or derived functors of non-additive functors in a
form of derived limits. The theory of so-called ${\bf fr}$-codes
also is developed. This is a method how different functors from
the category of groups to the category of abelian groups, such as
group homology, tensor products of abelianization, can be coded as
sentences in the alphabet with two symbols ${\bf f}$ and ${\bf
r}$.
\end{abstract}

\vspace{1cm}

These are notes based on lectures given by authors in 2015. The
main idea of the theory presented in this text is the following.
Given a group (or analogously, associative algebra, Lie algebra
over a ring, abelian group etc) $G$, we consider the category
${\sf Pres}(G)$. The objects of ${\sf Pres}(G)$ are surjective
homomorphisms $\pi:F\epi G$ where $F$ is a free group and
morphisms $f:(\pi_1:F_1\epi G)\to (\pi_2:F_2\epi G)$ are
homomorphisms $f:F_1\to F_2$ such that $\pi_1=\pi_2f$. The
category ${\sf Pres}(G)$ has coproducts and hence is contractible.
However, cohomology group of ${\sf Pres}(G)$ with coefficients in
functors define invariants of $G$. This provides an approach to
functors in the category of groups (or associative algebras, Lie
algebras, or in general, in the categories with enough
projectives). Here is an illustrative example from
\cite{Ivanov-Mikhailov}. Let $G$ be a 2-torsion-free group. For a
given $\pi: F\epi G$, denote $R:=\ker(\pi)$. Then there is a
natural isomorphism
$$
H^1(B{\sf Pres}(G), \frac{[R,R]}{[[R,R],F]})=\lim^1
\frac{[R,R]}{[[R,R],F]}=H_3(G;\mathbb Z/2),
$$
where $B{\sf Pres}(G)$ is the classifying space of the category
${\sf Pres}(G)$. That is, one can obtain the third homology of $G$
with $\mathbb Z/2$-coefficients as the first derived limit of the
presentation $\frac{[R,R]}{[[R,R],F]}$ over the category ${\sf
Pres}(G)$. A lot of different functors like group homology, cyclic
and Hochschild homology of algebras, homology of Lie algebras,
derived functors in the sense of Dold-Puppe can be obtained in
this way. We refer to \cite{Quillen2}, \cite{Ivanov-Mikhailov},
\cite{EM}, \cite{MP1} for discussion of different aspects of this
approach.

One of the subjects of these notes is the theory of so-called
${\bf fr}$-codes. We consider an alphabet with two symbols ${\bf
f, r}$ and form {\it sentences} with words in this alphabet, for
example, $({\bf rfr}\ {\bf frr}\ {\bf ffff})$ or $({\bf rr}\ {\bf
frf})$. Now, looking at the object $\pi: F\epi G$, we construct an
ideal in $\mathbb Z[F]$ given by the sum of words in a sentence
where we replace $\bf f$ by augmentation ideal of $F$ and $\bf r$
by the ideal $(\ker(\pi)-1)\mathbb Z[F]$. The {\it translation} of
a sentence (or ${\bf fr}$-code) on the language of functors in the
category of groups is the corresponding to that ideal
$\lim^1$-functor. For example, the above sentences give natural
functors:
\begin{align*}
& ({\bf rfr}\ {\bf frr}\ {\bf ffff})\rightsquigarrow\ \lim^1({\bf
rfr}+{\bf frr}+{\bf ffff})={\sf Tor}(G_{ab}\otimes G_{ab},
G_{ab}),\\
& ({\bf rr}\ {\bf frf})\ \rightsquigarrow \lim^1({\bf rr}+{\bf
frf})=H_3(G).
\end{align*}
Here derived limits are taken over ${\sf Pres}(G)$. We give a
table of different ${\bf fr}$-codes at the end of these notes. The
authors thank F. Pavutnitsky for discussions related to the
subject of these notes.

\section{Background}

\subsection{Tarski-Grothendieck set theory}

Through the paper we use the advanced version of axiomatic set
theory called Tarski-Grothendieck set theory.  It is an extension
of the standard Zermelo-Fraenkel set theory with the axiom of
choice (ZFC) by the Tarski's axiom of universes (U). Here we
recall what is it (for details see \cite{Tarski},
\cite{Boubaki_univers}, \cite{Shulman},\cite{Braids}).

A {\bf Grothendieck universe} is a set $U$ with the following properties:
\begin{enumerate}
\item if $x\in U$ and $y\in x,$ then $y\in U;$
\item $\emptyset \in U;$
\item if $x\in U,$ then $\mathcal P(x)\in U$ (where $\mathcal P(x)$ is the power-set);
\item if $\{x_i\}_{i\in I}$ is a family such that $x_i\in U$ and $I\in U,$ then $\bigcup_{i\in I}x_i\in U.$
\end{enumerate}
It is similarly easy to prove that any Grothendieck universe $U$ contains:
\begin{itemize}
\item all singletons of each of its elements;
\item all couples of its elements;
\item all products of all families of elements of $U$ indexed by an element of $U$;
\item all disjoint unions of all families of elements of $U$ indexed by an element of $U$;
\item all intersections of all families of elements of $U$ indexed by an element of $U$;
\item all functions between any two elements of $U$;
\item all subsets of $U$ whose cardinal is an element of $U$.
\end{itemize}

The Tarski's axiom (U) is the following.

\begin{itemize}
\item[(U)] For each set $x$ there is a Grothendieck universe $U$ such that $x\in U.$
\end{itemize}

A Grothendieck universe is a  set closed under all set-theoretical constructions.
It follows that, if $\mathbb N\in U,$ then $\mathbb Z, \mathbb Q, \mathbb R, \mathbb C, \mathbb R^n, \mathcal P(\mathbb R^n), (\mathbb R^n)^{\mathbb R^m}$ and all their subsets are in $U.$ Roughly speaking, if $\mathbb N\in U,$ then all classical mathematics is in $U.$ More precisely, if $\mathbb N\in U$ then $U$ is a model of ZFC set theory. Moreover, we fix a couple of universes $U,U'$ such that $U\in U'$, and start to call elements of $U$ by sets and elements of $U'$ by classes we get a model for NBG class theory. So considering of universes allows to avoid considering classes. Moreover, it is more flexible because we can consider a sequence of universes $U_0\in U_1\in U_2\in \dots$ and call elements of $U_n$ classes of $n$th level and work with them as with sets because they are sets in the Tarski-Grothendieck set theory.

A set $x$ is called $U$-small for some Grothedieck universe $U$ if $x\in U.$ A topological space (group, ring, module,\dots) is called $U$-small if its underlying set is $U$-small. If $U$ is fixed, we call them just small.  A category is called $U$-small if the set of objects and the set of morphisms are $U$-small. In order to avoid using classes we will assume that {\bf all categories are small}. It follows that any category is $U$-small for some Grothendieck universe $U.$ We denote by ${\sf Sets}^U,$ ($ {\sf Top}^U, {\sf Gr}^U, {\sf Rings}^U, {\sf Mod}(R)^U,$ ${\sf Cat}^U$ \dots) categories of $U$-small sets (topological spaces, groups, rings, $R$-modules, categories \dots). If $U\in U',$ where $U'$ is another Grothendieck universe, then all these categories are $U'$-small. Further we will avoid the superscript $U$ and assume that $U$ is a  Grothendieck universe such that $\mathbb N \in U$.

\subsection{Simplicial sets}
In this subsection we recall some basic information about simplicial sets. For more information and proofs see \cite{May}, \cite{Gabriel_Zisman},  \cite{Goerss_Jardine}.

The  {\bf simplicial indexing category} is the category $\Delta$ whose objects are posets
$$[n]=\{0<1<\dots<n\}$$ for $n\geq 0$ and morphisms are order-preserving maps. For $n\geq 1$ and $0\leq i\leq n$ we define $i$th {\bf coface Map} $d^i:[n-1]\to [n]$ as the unique injective order-preserving map whose image does not contain $i.$
For $n\geq 0$ and $0\leq i\leq n$ we define $i$th {\bf codegeneracy Map} $s^i:[n+1]\to [n]$ as the unique sujective order-preserving map such that $i$ has two-element preimage. It is easy to check that they satisfy the following relations:
 (i) $d^jd^i=d^id^{j-1}$ if $i<j$; (ii) $s^js^i=s^{i-1}s^j$ if $i>j;$ (iii) $s^jd^i=d^is^{j-1}$ if $i<j;$  (iv) $s^{i}d^i={\sf id}=s^{i+1}d^i;$ (v) $s^jd^i=d^{i-1}s^j$ if $i>j+1.$
It follows that any order-preserving map $\alpha:[m]\to [n]$ can be written is one and only one way as
$$\alpha=d^{j_l}\dots d^{j_1}s^{i_k}\dots s^{i_1},$$
where $m>i_1>\dots >i_k\geq 0$ and $0 \leq j_1<\dots <j_l\leq n.$ Using this one can prove that $\Delta$ is the path category of the quiver
$$
\xymatrix{
 \dots &
[2]\ar@<-6pt>[r]|{s^0}\ar@<-12pt>[r]|{s^1} &
[1]\ar@<-6pt>[r]|{s^0} \ar@<-12pt>[l]|{d^0}\ar@<-6pt>[l]|{d^1} \ar[l]|{d^2} &
[0]\ar@<-6pt>[l]|{d^0} \ar[l]|{d^1}}
$$
with relations (i)-(v).

Let $\C$ be a category.  A {\bf simplicial object} of $\C$  is a functor $X:\Delta^{\rm op}\to \C.$ We set $X_n=X([n])$, $d_i=X(d^i)$ and $s_i=X(s^i).$ A morphism of simplicial objects is a natural transformation. The category of simplicial objects is denoted by ${\sf s}\C.$ Using that $\Delta$ is a
path category with relations, we can define a simplicial set as a sequence of objects $X_0,X_1,\dots$ together with {\bf face morphisms} $d_i:X_n\to X_{n-1}$ and {\bf degeneracy morphisms} $s_i:X_n\to X_{n+1}$ for $0\leq i\leq n$ satisfying identities:
\begin{itemize}
\item[(s1)] $d_id_j=d_{j-1}d_i$ if $i<j$;
\item[(s2)] $s_is_j=s_js_{i-1}$ if $i>j;$
\item[(s3)] $d_is_j=s_{j-1}d_i$ if $i<j;$
\item[(s4)] $d_is_{i}={\sf id}=d_is_{i+1};$
\item[(s5)] $d_is_j=s_jd_{i-1}$ if $i>j+1.$
\end{itemize}

The most important case of $\C$ is ${\sf Sets}.$ Simplicial objects of ${\sf Sets}$ are called {\bf simplicial sets}. If $X$ is a simplicial set, elements of $X_n$ are called {\bf $n$-simplexes} of $X.$ $n$-simplex $x$ is {\bf degenerate} if $x=s_i(x')$ for some $i$ and $x'\in X_{n-1}.$

   The simplicial set  $\Delta[m]=\Delta(-,[m])$ is called  simplicial $m$-simplex. It defines a functor
$$\Delta[-]:\Delta \longrightarrow {\sf  sSets}$$
$$[m]\longmapsto \Delta[m] $$
which is the Yoneda embedding for the category $\Delta.$ For simplicial sets $X,Y$ we denote by $X\times Y$ their dimension-wise product (which is the product in the categorical sense) and  by ${\sf Map}(X,Y)$ the {\bf function complex} (or {\bf internal hom}) which is the simplicial set defined by
$${\sf Map}(X,Y)={\sf sSets}(\Delta[-]\times X ,Y).$$ It is easy to check that there is a natural isomorphism  ${\sf sSets}(X\times Y,Z)\cong {\sf sSets}(X,{\sf Map}(Y,Z)).$

A (simplicial) {\bf homotopy} between two simplicial maps $f,g:X\to Y$ is an element $H\in {\sf Map}(X,Y)_1$ such that $d_1(H)=f$ and $d_0(H)=g.$ In other words, it is a simplicial map $H:\Delta[1]\times X \to Y$ such that $H(\Delta[d^1]\times {\sf id})=f$ and $H(\Delta[d^0]\times {\sf id})=g.$ Here we identify $\Delta[0]\times X$ with $X.$

Let $\Delta_{\sf Top}[n]$ denotes the standard $n$-simplex in $\mathbb R^{n+1}.$ Then any order-preserving $\alpha:[m]\to [n]$ induces a linear map $\mathbb R^{m+1}\to \mathbb R^{n+1}$ that sends $e_{i+1}$ to $e_{\alpha(i)+1}.$ This map can be restricted to the map $\Delta_{\sf Top}[\alpha]:\Delta_{\sf Top}[m]\to \Delta_{\sf Top}[n].$ Hence, we defined a functor to the category of topological spaces
$$\Delta_{\sf Top}[-]:\Delta\longrightarrow {\sf Top}.$$

There is a pair of adjoint functors between categories ${\sf Top}$ and ${\sf sSets}$ that make these categories very close to each other. First of these functor is the {\bf singular simplicial set} functor
$$  {\sf Sing}:{\sf Top} \longrightarrow {\sf sSets}$$
given by
$${\sf Sing}(X)={\sf Top}\left(\Delta_{\sf Top}[-],X\right).$$
The second is the geometric realisation functor
$$|-|:{\sf sSets}\longrightarrow {\sf Top}.$$
Its constructive definition is little bit more complicated but it can be reconstructed from the equality $|\Delta[-]|=\Delta_{\sf Top}[-]$ and the fact that it commutes with all colimits because any simplicial set is a colimit of simplcial simplexes $\Delta[m]$. Moreover, geometric realisation functor is left adjoint to ${\sf Sing}$ i.e. for a simplicial set $X$ and a topological space $T$ there is a natural isomorphism
$${\sf Top}(|X|,T)\cong {\sf sSets}(X,{\sf Sing}(T)).$$

An important property of geometric realisations is it commutes with some products. For example, if $X$ and $Y$ are simplicial sets and one of them is finite (i.e. there are only finite number of non-degenerate simplexes), then there is a natural isomorphism of topological spaces
\begin{equation}\label{eq_prod}
|X\times Y|\cong |X|\times |Y|.
\end{equation}
In particular, any simplicial homotopy $H:\Delta[1]\times X\to Y$ defines a homotopy $|H|:I\times |X|\to |Y|,$ where $I=\Delta_{\sf Top}[1]$ is the closed interval.

Let $X$ be a simplicial set. Consider the complex ${\sf C}_\bullet(X)$ whose $n$th term ${\sf C}_n(X)$ is the free abelian group generated by $X_n$ and $d=\sum_i (-1)^id_i.$ Then homology and cohomology of $X$ with coefficients in an abelian group $A$ are defined as follows
$$H_n(X,A)=H_n({\sf C}_\bullet(X)\otimes A),\hspace{1cm} H^n(X,A)=H^n({\sf Hom}({\sf C}_\bullet(X),A)).$$
It is well known that
\begin{equation}\label{eq_homology}
H_n(X,A)\cong H_n(|X|,A), \hspace{1cm} H^n(X,A)\cong H^n(|X|,A).
\end{equation}

A simplicial set $X$ is {\bf pointed} if all sets $X_n$ are pointed and face and degeneracy maps preserve base points. In other words, a pointed simplicial set is a simplicial object in the category of pointed sets. The first item of the following proposition can be found in \cite[Prop. 2.17]{Braids}. The second item can be proved similarly.

\begin{Proposition}\label{Prop_Contractible_criterion}{\bf (Contractible criterion)} Let $X$ be a pointed simplicial set.
\begin{enumerate}
\item If there exist point preserving maps $\tilde s_{-1}:X_n\to X_{n+1}$ satisfying simplicial identities \hbox{(s2)-(s5)}, then $|X|$ is contractible.

\item If there exist point preserving maps $\tilde s_{n+1}:X_n\to X_{n+1}$ satisfying  simplicial identities (s2)-(s5), then $|X|$ is contractible.
\end{enumerate}
\end{Proposition}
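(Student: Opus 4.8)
The plan is to show that each hypothesis supplies an \emph{extra degeneracy}, hence a simplicial contraction of $X$ onto its base point, which upon geometric realisation becomes a topological contraction of $|X|$. I would give the argument for (1) in full and deduce (2) from it by the order-reversing symmetry of $\Delta$.

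First I would read off the relations the operator $\tilde s_{-1}$ satisfies once it is adjoined to the degeneracies with index $-1$. Specialising (s2)--(s5) to the index $-1$ yields in particular the \emph{contraction relation} $d_0\tilde s_{-1}={\sf id}$ (the instance of (s4)), together with
$$ d_i\tilde s_{-1}=\tilde s_{-1}d_{i-1}\ (i\ge 1),\qquad s_i\tilde s_{-1}=\tilde s_{-1}s_{i-1}\ (i\ge 0). $$
Since $X$ is pointed and $\tilde s_{-1}$ preserves base points, I would regard the base point as a $(-1)$-dimensional augmentation $X_{-1}=\{\ast\}$, with $d_0\colon X_0\to X_{-1}$ the constant map and $\tilde s_{-1}\colon X_{-1}\to X_0$ the base point inclusion, so that the displayed relations hold for all indices where they make sense.

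Next I would build the contracting homotopy. Define $h_i\colon X_n\to X_{n+1}$, for $0\le i\le n$, by
$$ h_i=s_{i-1}\cdots s_0\,\tilde s_{-1}\,d_0^{\,i}, $$
so that $h_0=\tilde s_{-1}$. A direct check from the three relations above shows that the $h_i$ obey the identities of a simplicial homotopy, that $d_0h_0={\sf id}_X$, and that $d_{n+1}h_n$ equals the constant map $c$ at the base point; assembling them produces a simplicial homotopy $H\colon\Delta[1]\times X\to X$ between ${\sf id}_X$ and $c$. The bookkeeping verifying the homotopy identities is routine. The one delicate point, and the main obstacle, is the top face $d_{n+1}h_n=c$: one pushes $d_{n+1}$ through the degeneracies down to $d_1\tilde s_{-1}$, and it is exactly here that point preservation (the augmentation interpretation of the base point) is needed to land at the base point rather than at a stray vertex.

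Finally I would realise $H$. Since $\Delta[1]$ is finite, \eqref{eq_prod} gives $|\Delta[1]\times X|\cong|\Delta[1]|\times|X|=I\times|X|$, so $|H|\colon I\times|X|\to|X|$ is a homotopy from $|{\sf id}_X|={\sf id}_{|X|}$ to the constant map at $|\ast|$; hence $|X|$ is contractible. For (2) I would pass to the automorphism $r$ of $\Delta$ sending $d^i\mapsto d^{n-i}$ and $s^i\mapsto s^{n-i}$ and precompose: the simplicial set $X^{r}=X\circ r^{\rm op}$ has $d_i^{X^r}=d_{n-i}$ and $s_i^{X^r}=s_{n-i}$, so the hypothesised $\tilde s_{n+1}$ becomes precisely a base-point-preserving extra degeneracy $\tilde s_{-1}$ for $X^{r}$ satisfying (s2)--(s5). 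As $r$ is an isomorphism of categories, $|X^{r}|\cong|X|$, and part (1) applied to $X^{r}$ finishes the proof; the only additional care is matching the indices under the reflection.
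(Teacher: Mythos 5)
The paper does not actually prove this proposition: it cites \cite[Prop.~2.17]{Braids} for item (1) and says item (2) is proved similarly, so your argument can only be measured against the standard argument. Your overall strategy is that standard argument, and most of it is sound: the operators $h_i=s_{i-1}\cdots s_0\,\tilde s_{-1}\,d_0^{\,i}$ do satisfy the simplicial-homotopy identities (the verifications use only the ordinary identities, (s2), (s5), and $d_0\tilde s_{-1}={\sf id}$), and the reduction of (2) to (1) via the order-reversing automorphism of $\Delta$ is legitimate, granted the standard fact $|X^r|\cong|X|$ (which needs the coordinate-flip isomorphism $\Delta_{\sf Top}[-]\circ r\cong \Delta_{\sf Top}[-]$, not just that $r$ is an isomorphism of categories). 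One typographical point you silently corrected: as printed, (s4) reads $d_is_i={\sf id}=d_is_{i+1}$, which is inconsistent with (s3) and has \emph{no} instance involving $\tilde s_{-1}$; only the standard form $d_is_i={\sf id}=d_{i+1}s_i$ yields your contraction relation $d_0\tilde s_{-1}={\sf id}$ as its instance at index $-1$.

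The genuine gap is exactly at the point you call delicate, and your fix does not work as stated. You need $d_{n+1}h_n$ to be constant at the base point, which after pushing $d_{n+1}$ through the degeneracies reduces to the identity $d_1\tilde s_{-1}={\sf const}_*$ on $X_0$, i.e.\ the instance $d_1\tilde s_{-1}=\tilde s_{-1}d_0$ of (s5) \emph{at the augmentation level} $X_{-1}=\{*\}$. You claim this is supplied by point preservation (``the augmentation interpretation''); it is not. Point preservation gives only $\tilde s_{-1}(*)=*$, and the augmentation-level identity is independent of all the hypotheses as literally stated: take $X=\Delta[0]\sqcup\Delta[0]$, based at one of the two points, with $\tilde s_{-1}$ defined componentwise. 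Since each component has a single simplex in every dimension, every instance of (s2)--(s5) that can be formulated inside $X$ holds and $\tilde s_{-1}$ preserves the base point, yet $|X|$ is two points and not contractible; here $d_1\tilde s_{-1}(x)$ is a ``stray vertex'' precisely when $x$ lies in the non-based component. So the proposition is true only under the stronger reading in which the identities are also imposed at the augmentation level, i.e.\ $d_1\tilde s_{-1}$ (resp.\ $d_0\tilde s_{1}$ on $X_0$ in case (2)) is required to be the constant map at $*$. That is the reading the paper implicitly relies on later: for the simplicial functor $E$ used to prove $\lim^n M\cong H^n(\B\C,M)$, one has $d_0\tilde s_1(\beta)=({\sf id}_c)$, the base point of $E_0(c)$. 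Your proof becomes correct once you make this augmentation-level identity part of the hypothesis (part of what ``satisfying (s2)--(s5)'' must mean) instead of presenting it as a consequence of point preservation.
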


\section{Classifying space of a category}

\subsection{Nerve of a category}

We consider posets as categories is a usual way and order-preserving maps as functors. Thus simplicial indexing category is a full subcategory of the category of small categories $\Delta\subset {\sf Cat}.$ The {\bf nerve} of a category $\C$ is a simplicial set $\N \C:\Delta^{\rm op} \to {\sf Sets}$ given by $$\N\C={\sf Cat}(-,\C)\mid_{\Delta^{\rm op}}.$$
For example $\Delta[n]=\N([n]).$
It is easy to see that there is a natural bijection between the set $(\N\C)_n={\sf Cat}([n],\C)$
and the set of all sequences of morphisms $(\alpha_1,\alpha_{2},\dots,\alpha_n)$ such that ${\sf codom}(\alpha_{i+1})={\sf dom}(\alpha_{i})$ in the category $\C.$
$$\bullet \xleftarrow{\ \alpha_1\ } \bullet \xleftarrow{\ \alpha_2\ } \bullet \xleftarrow{\ \alpha_3\ } \dots \xleftarrow{\ \alpha_n\ } \bullet $$
 Then face maps act as follows $d_i(\alpha_n,\dots,\alpha_1)=(\alpha_n,\dots,\alpha_{i+1}\alpha_i,\dots, \alpha_1)$ for $1\leq i< n ,$ $d_0(\alpha_n,\dots,\alpha_1)=(\alpha_n,\alpha_{n-1},\dots,\alpha_2)$ and $d_n(\alpha_n,\alpha_{n-1},\dots,\alpha_1)=(\alpha_{n-1},\dots,\alpha_1).$ Degeneracies act as follows $s_i(\alpha_n,\dots,\alpha_1)=(\alpha_n,\dots,\alpha_{i+1}, {\sf id}_{c_i},\alpha_i, \dots, \alpha_1)),$ where $c_i={\sf codom}(\alpha_i).$
 Roughly speaking,  $(\N\C)_n$ consists of commutative  $n$-simplexes in $\C.$
The nerve $\N\C$ is functorial by $\C$ i.e. it defines a functor  $\N: {\sf Cat}\to {\sf sSets}.$

If $\C,\D$ are categories, we denote by $\D^\C$ the category of functors whose objects are functors and morphisms are natural transformations. It is easy to see and well known that there is a natural isomorphism  ${\sf Cat}(\C\times \D,{\mathcal E})\cong {\sf Cat}(\C,{\mathcal E}^\D).$
\begin{Proposition} Let $\C$ and $\D$ be  categories. Then $\N :{\sf Cat}\to {\sf sSets}$ is a fully faithful functor i.e. it induces a bijection
$${\sf Cat}(\C,\D)\cong {\sf sSets}(\N \C,\N \D).$$
Moreover, there are natural isomorphisms $$\N(\C\times \D)\cong \N\C \times \N \D, \hspace{1cm} \N(\D^\C)\cong {\sf Map}(\N\C,\N\D).$$
\end{Proposition}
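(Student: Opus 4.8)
The plan is to treat the three assertions together, isolating full faithfulness as the real content: the product formula I would prove directly from the universal property of products, and the internal-hom isomorphism I would then deduce from full faithfulness together with the product formula and the exponential law ${\sf Cat}([m]\times\C,\D)\cong{\sf Cat}([m],\D^\C)$ recorded just above the statement. Throughout I use the explicit description of the preceding paragraph: $(\N\C)_0={\sf Ob}(\C)$, $(\N\C)_1={\sf Mor}(\C)$, and in general $(\N\C)_n$ is the set of chains of $n$ composable morphisms, with the stated face and degeneracy maps.

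The product formula is immediate. For each $[n]$ the universal property of the categorical product gives a bijection ${\sf Cat}([n],\C\times\D)\cong{\sf Cat}([n],\C)\times{\sf Cat}([n],\D)$, natural in $[n]\in\Delta$. This is exactly the equality $\N(\C\times\D)_n\cong(\N\C\times\N\D)_n$, compatible with all faces and degeneracies since these are induced by precomposition with maps $[m]\to[n]$, so the two simplicial sets are naturally isomorphic.

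The heart of the matter is that $\Phi\colon{\sf Cat}(\C,\D)\to{\sf sSets}(\N\C,\N\D)$, $F\mapsto\N F$, is a bijection. Injectivity is clear because $\N F$ already records $F$ on objects (level $0$) and on morphisms (level $1$). For surjectivity I would start from a simplicial map $f\colon\N\C\to\N\D$ and define $F$ on objects by $f_0$ and on morphisms by $f_1$; commutation of $f$ with the two face maps $(\N\C)_1\rightrightarrows(\N\C)_0$ forces $F$ to preserve domains and codomains, and commutation with $s_0$ forces $F({\sf id})={\sf id}$. The essential tool, which I would state explicitly, is the Segal condition for the nerve: the spine maps $(\N\C)_n\to(\N\C)_1\times_{(\N\C)_0}\cdots\times_{(\N\C)_0}(\N\C)_1$ are bijections, since a functor $[n]\to\C$ is precisely a composable chain of $n$ morphisms. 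Applied in dimension $2$, this turns the statement that $f$ respects the faces of $2$-simplices into the statement that $F$ respects composition (the unique $2$-simplex with outer edges $\alpha_1,\alpha_2$ has middle edge $\alpha_2\alpha_1$, and $f$ sends it to the unique $2$-simplex with outer edges $f_1\alpha_1,f_1\alpha_2$); applied in every dimension, it shows that any simplicial map out of $\N\C$ is determined by its values on $0$- and $1$-simplices, which yields $\N F=f$.

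Finally, the internal-hom formula follows formally by a level-wise computation: ${\sf Map}(\N\C,\N\D)_m={\sf sSets}(\Delta[m]\times\N\C,\N\D)$, and since $\Delta[m]=\N([m])$ the product formula rewrites the left factor as $\N([m]\times\C)$, so this becomes ${\sf sSets}(\N([m]\times\C),\N\D)$; full faithfulness identifies it with ${\sf Cat}([m]\times\C,\D)$, and the exponential law with ${\sf Cat}([m],\D^\C)=\N(\D^\C)_m$. Tracking naturality in $[m]$ upgrades these bijections to an isomorphism of simplicial sets. I expect the only genuine obstacle to lie in the surjectivity argument, namely in verifying that the low-dimensional simplicial identities for $f$ translate exactly into the functor axioms and that the Segal condition really pins down $f$ in all dimensions; the product and internal-hom isomorphisms are then essentially bookkeeping around these adjunctions.
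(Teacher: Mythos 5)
Your proof is correct and follows essentially the same route as the paper's: both recover the functor from $f_0$ and $f_1$, extract functoriality from the degeneracy $s_0$ and the $d_1$-face of $2$-simplices, and deduce the internal-hom formula from full faithfulness, the product formula, and the exponential law ${\sf Cat}(-\times\C,\D)\cong{\sf Cat}(-,\D^\C)$. The only cosmetic difference is that you package the ``determined in all dimensions by its edges'' step as the Segal/spine bijection, whereas the paper proves the same fact by a direct induction on $n$ using the outer face maps.
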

\begin{proof}
Let $f:\N \C\to \N\D$ be a simplicial map. We need to prove that there is a unique functor $\F:\C\to \D$ such that $f=\N\F.$ If $f=\N\F,$ then $\F(c)=f_0(c)$ and $\F(\alpha)=f_1(\alpha)$ for all objects $c$ and morphisms $\alpha.$ Hence, if it exists, it is unique. Prove that it exists. First we prove by induction on $n$ that $f_n(\alpha_n,\dots,\alpha_1)=(f_1(\alpha_n),\dots,f_1(\alpha_1))$ for $n\geq 1.$ The basis is obvious. Let $f_n(\alpha_n,\dots,\alpha_1)=(\beta_n,\dots,\beta_1).$ Then $(\beta_n,\dots,\beta_2)=d_0(\beta_n,\dots,\beta_1)= d_0f_n(\alpha_n,\dots,\alpha_1)=f_{n-1}(\alpha_n,\dots,\alpha_2)=(f_1(\alpha_n),\dots,f_1(\alpha_2))$  and $(\beta_{n-1},\dots,\beta_{1})=d_0(\beta_{n-1},\dots,\beta_{1})= d_0f_n(\alpha_{n-1},\dots,\alpha_{1})=f_{n-1}(\alpha_{n-1},\dots,\alpha_{1})=(f_1(\alpha_{n-1}),\dots,f_1(\alpha_{1})).$ Hence $(\beta_n,\dots,\beta_1)=(f_1(\alpha_n),\dots,f_1(\alpha_1)).$
It follows that $f$ is defined by $f_0$ and $f_1.$
 By definition we set $\F(c)=f_0(c)$ and $\F(\alpha)=f_1(\alpha)$. Prove that $\F$ is a functor i.e. it respects identity morphisms and composition.  Indeed $\F({\sf id}_c)=f_1({\sf id}_c)=f_1(s_0(c))=s_0(f_0(c))={\sf id}_{\F(c)}$ and $\F(\beta\alpha)=f_1(\beta\alpha)=f_1(d_1(\beta,\alpha))=d_1f_2(\beta,\alpha)=d_1(\F(\beta),\F(\alpha))=\F(\beta)\F(\alpha).$ It follows that $\F$ is the functor such that $f=\N\F.$

The isomorphism $\N(\C\times \D)\cong \N\C\times \N\D$ is obvious. The second isomorphism follows from $\N (\D^\C)={\sf Cat}(-,\D^\C)\cong {\sf Cat}(-\times \C,\D)\cong {\sf sSets}(\Delta[-]\times \N\C,\N\D)={\sf Map}(\N\C,\N\D).$
\end{proof}

\begin{Remark}
This lemma implies that we can consider ${\sf Cat}$ as a full subcategory in ${\sf sSets}$ identifying $\C$ with its nerve. Moreover this embedding respects products and inner hom's.
\end{Remark}

Let $\F,{\mathcal G}:\C\to \D$ be two functors and $\varphi:\F\to \mathcal G$ be a natural transformation. Then $\varphi\in \N({\sf Fun}(\C,\D))_1\cong {\sf Map}(\N\C,\N\D)_1={\sf Cat}(\Delta[1]\times \N\C,\N\D).$ Hence, the natural transfomation $\varphi:\F\to \mathcal G$ defines a simplicial homotopy between $\N\F$ and $\N\mathcal G$ that we denote by
\begin{equation}\label{eq_Nphi}
\N\varphi: \Delta[1]\times \N\C\longrightarrow \N\D,\hspace{1cm} \N\F\ \overset{\N\varphi}\sim\ \N\mathcal G.
\end{equation}

Here we use the symbol '$\sim$' for simplicial homotopy relation, however, it is not an equivalence relation for arbitrary simplicial sets.

\subsection{Classifying space of a category}

The {\bf classifying space} $\B\C$ of a category $\C$ is the geometric realisation of its nerve
$$\B\C=|\N\C|.$$
A category $\C$ is said to be {\bf contractible} if $\B\C$ is contractible.
If we consider a group $G$ as a category with one object, the classifying space $\B G$ is the classifying space of this group. This construction defines a functor
$$\B:{\sf Cat} \longrightarrow {\sf Top}.$$
Using \eqref{eq_prod} and \eqref{eq_Nphi} we get that if $\F,{\mathcal G}:\C\to\D$ are functors, then a natural transformation $\varphi:\F\to {\mathcal G}$ defines a homotopy
\begin{equation}\label{eq_homotopy_B}
\B\varphi: I\times \B\C\longrightarrow \B\D,\hspace{1cm} \B \F\ \overset{\B\varphi}\sim\ \B\mathcal G.
\end{equation}

\subsection{Classifying space and adjoint functors}

An {\bf adjunction} between categories $\C$ and $\D$ is a pair of functors $\F:\C \rightleftarrows \D:\G$ with a natural isomorphism
$$\D(\F c,d)\cong \C(c,\G d).$$ In this case the functor $\F$ is called a left adjoint functor, the functor $\G$ is called a right adjoint functor and the adjunction is denoted by $\F \dashv \G$.  Then there is an natural isomorphism $\D(\F c,\F c)\cong \C( c,\G\F c)$ and the image of ${\sf id}_{\F c}$ under this isomorphism is called the {\bf unit of adjunction} $\eta_c: c\to \G\F c.$ Similarly the isomorphism $\C(\G d,\G d)\cong \D(\F\G d,d)$ defines the {\bf counit of adjuntion} $\varepsilon_d:\F\G d\to d.$ These morphisms define a natural transformations
$$\eta : {\sf Id}_{\C} \longrightarrow \G\F, \hspace{1cm} \varepsilon :\F\G \longrightarrow {\sf Id}_{\D}.$$

\begin{Proposition}
If there is an adjunction between categories $\C$ and $\D$, then their classifying spaces $\B\C$ and $\B\D$ are homotopy equivalent.
\end{Proposition}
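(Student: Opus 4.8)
The plan is to show that the induced continuous maps $\B\F:\B\C\to\B\D$ and $\B\G:\B\D\to\B\C$ are mutually homotopy inverse. The whole argument runs entirely through the homotopy \eqref{eq_homotopy_B}: a natural transformation of functors induces a homotopy of the realised maps. The key structural fact I would use is that $\B$ is a functor (it is the composite of the functorial nerve $\N:{\sf Cat}\to{\sf sSets}$ with the geometric realisation $|-|:{\sf sSets}\to{\sf Top}$), so that $\B(\G\F)=\B\G\circ\B\F$, $\B(\F\G)=\B\F\circ\B\G$, and $\B({\sf Id}_\C)={\sf id}_{\B\C}$, $\B({\sf Id}_\D)={\sf id}_{\B\D}$.

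First I would invoke the unit $\eta:{\sf Id}_\C\to\G\F$. This is a natural transformation between the two functors ${\sf Id}_\C$ and $\G\F$ from $\C$ to $\C$, so by \eqref{eq_homotopy_B} it yields a homotopy $\B\eta:I\times\B\C\to\B\C$ with $\B({\sf Id}_\C)\overset{\B\eta}\sim\B(\G\F)$; by functoriality of $\B$ this reads ${\sf id}_{\B\C}\sim\B\G\circ\B\F$. Symmetrically, the counit $\varepsilon:\F\G\to{\sf Id}_\D$ is a natural transformation between functors $\D\to\D$, and \eqref{eq_homotopy_B} produces a homotopy $\B\F\circ\B\G=\B(\F\G)\sim\B({\sf Id}_\D)={\sf id}_{\B\D}$. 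Combining the two, $\B\F$ and $\B\G$ are homotopy inverse to one another, whence $\B\C$ and $\B\D$ are homotopy equivalent.

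The one point that deserves care — and which I would flag explicitly — is the direction of the natural transformations: $\eta$ points from ${\sf Id}_\C$ towards $\G\F$, while $\varepsilon$ points from $\F\G$ towards ${\sf Id}_\D$, and neither is required to be invertible. This is harmless here, because \eqref{eq_homotopy_B} delivers an actual topological homotopy $I\times\B\C\to\B\D$ regardless of the direction of the transformation, and topological homotopy of maps \emph{is} a genuine (in particular symmetric) equivalence relation. Thus the earlier caveat that the simplicial relation '$\sim$' fails to be symmetric on arbitrary simplicial sets evaporates once we pass to geometric realisations, and the two one-sided homotopies above suffice to conclude a two-sided homotopy equivalence.

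Consequently I do not expect a genuine obstacle in this proof; the content is already packaged in \eqref{eq_homotopy_B}. The only substantive checks are the purely formal identities $\B(\G\F)=\B\G\circ\B\F$ and $\B({\sf Id})={\sf id}$, which follow immediately from functoriality of $\N$ and of $|-|$, together with the observation just made about symmetry of topological homotopy. I would therefore keep the write-up short, stating the two homotopies coming from $\eta$ and $\varepsilon$ and concluding that $\B\F$ is a homotopy equivalence with inverse $\B\G$.
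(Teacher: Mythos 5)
Your proof is correct and is essentially the paper's own argument: the paper likewise applies \eqref{eq_homotopy_B} to the unit $\eta$ and counit $\varepsilon$ to obtain the homotopies ${\sf id}_{\B\C}\sim(\B\G)(\B\F)$ and $(\B\F)(\B\G)\sim{\sf id}_{\B\D}$. Your added remarks on the functoriality of $\B$ and on the symmetry of topological (as opposed to simplicial) homotopy are sound clarifications of points the paper leaves implicit.
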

\begin{proof}
If $\F:\C\rightleftarrows \D :\G $ is an adjunction, then $\B \eta $ and $\B\varepsilon$ constructed in \eqref{eq_homotopy_B} are homotopies ${\sf id}_{\B \C}\sim(\B\G)(\B\F) $ and $(\B\F)(\B\G)\sim {\sf id}_{\B\D}.$
\end{proof}
\begin{Corollary}
If a category $\C$ has an initial  or a terminal object, it is contractible.
\end{Corollary}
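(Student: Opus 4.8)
The plan is to deduce this directly from the preceding Proposition by exhibiting an adjunction between $\C$ and the terminal category $[0]$ (the one-element poset $\{0\}$, regarded as a category with a single object and only its identity morphism). First I would record that $\B[0]$ is contractible: since $\N[0]=\Delta[0]$, its realisation is $|\Delta[0]|=\Delta_{\sf Top}[0]$, a single point. Thus it suffices to produce an adjunction between $\C$ and $[0]$, for then the Proposition just proved gives a homotopy equivalence $\B\C\simeq\B[0]$, and the right-hand side is a point.

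There is always a unique functor $P:\C\to[0]$. If $\C$ has a terminal object $t$, let $T:[0]\to\C$ be the functor sending $0$ to $t$; I claim $P\dashv T$. Indeed, for every object $c$ of $\C$ the set $\C(c,T0)=\C(c,t)$ is a singleton by the defining property of the terminal object, and $[0](Pc,0)$ is a singleton because $[0]$ has only identity morphisms, so the two hom-sets are canonically isomorphic. Dually, if $\C$ has an initial object $i$, let $I:[0]\to\C$ send $0$ to $i$; then for every $c$ the sets $\C(I0,c)=\C(i,c)$ and $[0](0,Pc)$ are both singletons, which gives $I\dashv P$.

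The only point requiring attention is the naturality of these bijections, but it is automatic: the hom-isomorphism relates singleton sets in each variable, and any square of maps between singletons commutes. Hence in either case the preceding Proposition applies and yields $\B\C\simeq\B[0]$, a point, so $\C$ is contractible. I expect no genuine obstacle here; all the content sits in the already-proved Proposition on adjunctions, and the remaining work reduces to the trivial observation that maps into a terminal object, and maps out of an initial object, form singleton hom-sets.
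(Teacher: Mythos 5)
Your proof is correct and follows essentially the same route as the paper: the paper also reduces the statement to the preceding Proposition by noting that an initial (resp.\ terminal) object is equivalent to an adjunction between $\C$ and the one-object, one-morphism category $*$, whose classifying space is a point. The only difference is that you spell out the singleton hom-set verification that the paper leaves implicit.
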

\begin{proof}
Let $*$ be a category with one object and one morphism. Then $\C$ has a initial object if and only if there is an adjunction $\F:*\rightleftarrows \C:\G.$ Dually, it has a terminal object if and only if there is an adjunction $\F:\C\rightleftarrows *:\G.$
\end{proof}

\subsection{Classifying space of a category with pairwise coproducts}

A {\bf category  with pairwise coproducts} is a category $\C$ such that
for any objects  $c_1,c_2\in \mathcal{C}$ there exists the coproduct $c_1 \overset{i_1}{\longrightarrow} c_1\sqcup c_2 \overset{i_2}{\longleftarrow} c_2$ in $\mathcal{C}$ ($\mathcal{C}$ does not necessarily have an initial or terminal object). A category with pairwise products is defined dually.

\begin{Proposition}\label{lemma_coproduct_contractible} A category $\mathcal{C}$ with pairwise (co)products is contractible.
\end{Proposition}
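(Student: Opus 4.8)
The plan is to show that the identity map on $\B\C$ is homotopic to a constant map, using the coproduct structure to interpolate through an auxiliary endofunctor. I will treat the case of pairwise coproducts; the case of pairwise products then follows by duality, applying the coproduct case to $\C^{\rm op}$ together with the standard homeomorphism $\B\C\cong\B(\C^{\rm op})$ (reversal of arrows identifies $\N\C$ with the opposite simplicial set of $\N(\C^{\rm op})$, and geometric realisation is invariant under this).

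First, fix an object $c_0\in\C$. For every object $c$ choose a coproduct diagram $c\xrightarrow{\,i^c_1\,}c\sqcup c_0\xleftarrow{\,i^c_2\,}c_0$, and define an endofunctor $T:\C\to\C$ by $T(c)=c\sqcup c_0$. On a morphism $\alpha:c\to c'$ let $T(\alpha)$ be the unique morphism determined by $T(\alpha)\,i^c_1=i^{c'}_1\alpha$ and $T(\alpha)\,i^c_2=i^{c'}_2$; functoriality, i.e. preservation of identities and composition, is immediate from the uniqueness clause in the universal property of the coproduct.

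Next I would exhibit two natural transformations with target $T$. The first inclusions assemble into a natural transformation $\eta:{\sf Id}_\C\to T$ with $\eta_c=i^c_1$, naturality being exactly the defining relation $T(\alpha)\,i^c_1=i^{c'}_1\alpha$. The second inclusions assemble into a natural transformation $\theta:{\sf const}_{c_0}\to T$ with $\theta_c=i^c_2$, where ${\sf const}_{c_0}$ is the constant functor at $c_0$; here naturality is the relation $T(\alpha)\,i^c_2=i^{c'}_2$. The point of passing through $T$ is that, absent an initial or terminal object, there is no direct natural transformation between ${\sf Id}_\C$ and a constant functor, whereas the coproduct, receiving both $c$ and $c_0$, receives maps from both.

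Finally, applying \eqref{eq_homotopy_B} to each of these natural transformations produces genuine topological homotopies $\B({\sf Id}_\C)\sim\B T$ and $\B({\sf const}_{c_0})\sim\B T$. Since $\B({\sf Id}_\C)={\sf id}_{\B\C}$ by functoriality of $\N$ and $|-|$, while $\B({\sf const}_{c_0})$ is the constant map at the $0$-cell of $\B\C$ determined by $c_0$, symmetry and transitivity of topological homotopy give ${\sf id}_{\B\C}\sim\B T\sim\B({\sf const}_{c_0})$, so $\B\C$ is contractible. The main point to handle with care is precisely this chaining: as warned after \eqref{eq_Nphi}, simplicial homotopy is not transitive, so it is essential that \eqref{eq_homotopy_B} yields honest homotopies $I\times\B\C\to\B\C$ of topological maps, for which the homotopy relation is an equivalence relation and the two homotopies may legitimately be composed.
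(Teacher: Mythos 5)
Your proof is correct and follows essentially the same route as the paper: fix $c_0$, form the endofunctor $c\mapsto c\sqcup c_0$, use the two coproduct inclusions as natural transformations ${\sf Id}_\C\to T\leftarrow {\sf const}_{c_0}$, apply \eqref{eq_homotopy_B}, and conclude by transitivity of topological homotopy, with the product case handled by duality. Your added care about why chaining the two homotopies is legitimate (topological, not merely simplicial, homotopy) is a point the paper glosses over, but it is the same argument.
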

\begin{proof} Assume that $\C$ is a category with pairwise coproducts.
 Chose an object $c_0\in
\mathcal{C}$ and consider the functor $\F:\mathcal{C}\to
\mathcal{C}$ given by the formula $\F(-)=-\sqcup c_0.$ By the
definition of coproduct $c\to c\sqcup c_0 \leftarrow c_0$, we obtain natural transformations  $\mathsf{Id}_\mathcal{C}\to \F \leftarrow \mathsf{Const}_{c_0}.$ Using \eqref{eq_homotopy_B}, we get that these natural transformations induce homotopies
$\mathsf{id}_{\mathsf{B}\mathcal{C}}\sim \B\F \sim \mathsf{const}_{c_0}.$
Hence, $\mathcal{C}$ is contractible. The case of a category with pairwise products is dual.
\end{proof}

\section{Representations of categories and higher limits.}\label{section_Representations_of_categories}

\subsection{Limits}

Recall \cite{Mac_Lane} that the limit  $\lim \: \mathcal F$ of a functor $\F:\C\to \mathcal D$ is an object of $\mathcal D$ together with a universal collection of morphisms $\{\varphi_c:\lim\: \F \to \F(c)\}_{c\in \C}$ such that  $\F(f) \varphi_c=\varphi_{c'}$ for any morphism $f:c\to c'.$ Universality means that for any object $d\in \mathcal D$ and any collection of morphisms $\{\psi_c:d\to \F(c)\}_{c\in \C}$ such that $\F(f) \psi_c=\psi_{c'}$ for any morphism $f:c\to c'$ there exists a unique morphism $\alpha:d\to \lim\:\F$ such that $\psi_c=\varphi_c\alpha.$ The limit is not always exists but if it exists it is unique up to unique isomorphism that commutes with morphisms $\varphi_c.$

Denote by ${\sf diag}:\D \to \D^\C$ the diagonal functor given by ${\sf diag}(d)(c)=d$ and ${\sf diag}(d)(\alpha)={\sf id}_d.$ For the sake of simplicity we will denote ${\sf diag}(d)$ by the symbol $d.$
If limits  of all functors $\C\to \D$ exist, they define the functor
$$\lim: \D^\C\longrightarrow \D$$ which is right  adjoint to the diagonal functor ${\sf diag} \dashv \lim$. In other words there is a natural isomorphism
$$\D^\C(d,\F)\cong \D(d,\lim\: \F).$$

\subsection{Limits over strongly connected categories}
Throughout the paper $k$ denotes a commutative ring.
Let $\F:\C\to \Mod(k)$ be a functor. Consider the subfunctor  ${\sf inv}\:\F\subseteq \F$ of 'invariant elements' given by the formula
$${\sf inv}\:\F(c)=\{x\in \F(c)\mid \forall c', \forall f_1,f_2:c\to c'\ \ \ \F(f_1)(x)=\F(f_2)(x)\}.$$

A category $\mathcal C$ is said to be {\bf strongly connected} if for any $c,c'\in \mathcal C$ the hom-set $ \mathcal C(c,c')$ is non-empty. For example, a category with one object is always strongly connected but the category of the ordered set $\{0<1\}$ is  not strongly connected because $\mathcal C(1,0)=\emptyset$.

\begin{Proposition}\label{Propostion_inv} Let $\C$ be a strongly connected category and $\F:\C\to  \Mod(k)$ be a functor. Then   $\lim\:\mathcal F$ exists and for any $c\in \C$ the morphism $\varphi_{c}:\lim\:\F\to \F(c)$ is a monomorphism and it induces the isomorphism
$$\lim\: \F\cong {\sf inv}\:\F(c).$$
\end{Proposition}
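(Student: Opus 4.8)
The plan is to use the standard construction of limits in $\Mod(k)$ as a subobject of a product and then exploit strong connectivity to collapse this subobject onto a single component. First I would recall that $\Mod(k)$ is complete, so $\lim \F$ exists and may be realised concretely as
$$\lim \F = \Big\{ (x_c)_{c\in\C} \in \prod_{c\in\C} \F(c) \ \Big|\ \F(f)(x_c) = x_{c'} \text{ for every } f\colon c\to c' \Big\},$$
with $\varphi_c$ the restriction of the projection onto the $c$-th factor. This already settles existence, and the remaining task is to identify this object through any one of the components $\varphi_c$.

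Next I would show that each $\varphi_c$ is a monomorphism. Suppose $(x_{c'})$ and $(y_{c'})$ lie in $\lim \F$ and agree in the $c$-component, $x_c = y_c$. For an arbitrary object $c'$, strong connectivity guarantees a morphism $f\colon c\to c'$, and the compatibility condition forces $x_{c'} = \F(f)(x_c) = \F(f)(y_c) = y_{c'}$. Hence the two families coincide everywhere, so $\varphi_c$ is injective.

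The heart of the argument is identifying the image of $\varphi_c$ with ${\sf inv}\,\F(c)$. The inclusion $\varphi_c(\lim\F)\subseteq {\sf inv}\,\F(c)$ is immediate: for $(x_{c'})\in\lim\F$ and any parallel pair $f_1,f_2\colon c\to c'$ the compatibility relations give $\F(f_1)(x_c)=x_{c'}=\F(f_2)(x_c)$, so $x_c$ is invariant. For the reverse inclusion I would start from $y\in{\sf inv}\,\F(c)$ and build a compatible family: for each object $c'$ choose, using strong connectivity, some $g_{c'}\colon c\to c'$, take $g_c={\sf id}_c$, and set $x_{c'}=\F(g_{c'})(y)$, so that $x_c=y$.

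The step that requires care, and where all the hypotheses meet, is checking that this family is well defined and actually lies in $\lim\F$. Independence of the choice of $g_{c'}$ is precisely the invariance of $y$: any two morphisms $c\to c'$ send $y$ to the same element. Granting this, compatibility is automatic, since for $f\colon c'\to c''$ we have $\F(f)(x_{c'})=\F(fg_{c'})(y)=x_{c''}$, the last equality holding because $fg_{c'}\colon c\to c''$ is one of the admissible morphisms defining $x_{c''}$. Thus $(x_{c'})\in\lim\F$ and $\varphi_c((x_{c'}))=y$, so $\varphi_c$ is onto ${\sf inv}\,\F(c)$; combined with injectivity this gives the isomorphism $\lim\F\cong{\sf inv}\,\F(c)$. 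I expect the only genuine subtlety to be this well-definedness check, with everything else being bookkeeping against the universal property.
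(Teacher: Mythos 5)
Your proof is correct, and it reaches the statement by a slightly different route than the paper, though the crux is identical. You take existence of $\lim\:\F$ for granted from completeness of $\Mod(k)$, realising it concretely as the submodule of compatible families inside $\prod_{c\in\C}\F(c)$ (legitimate here, since the paper assumes all categories are small, so this product exists in the category of $U$-small modules), and then identify the image of the projection $\varphi_c$ with ${\sf inv}\:\F(c)$: injectivity and the inclusion of the image into the invariants come from strong connectivity and compatibility, and surjectivity comes from propagating an invariant element $y$ along chosen morphisms $g_{c'}:c\to c'$, with invariance guaranteeing independence of the choices. The paper instead never invokes a pre-existing construction of the limit: it fixes $c_0$, shows that ${\sf inv}\:\F$ carries every morphism to an isomorphism that is independent of the particular morphism chosen between two given objects, uses this to define a cone $\varphi_c:{\sf inv}\:\F(c_0)\to\F(c)$, and verifies the universal property directly, so that ${\sf inv}\:\F(c_0)$ is the limit by definition. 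Your version is more elementary and element-wise; the paper's is self-contained (no appeal to completeness or products) and records the extra structural fact that the subfunctor ${\sf inv}\:\F$ has all its transition maps canonical isomorphisms, which is what makes the identification independent of the chosen object. Note that your well-definedness check (any two morphisms $c\to c'$ send $y$ to the same element) is exactly the paper's independence-of-$f$ claim, so the two arguments share the same key step, merely packaged as ``identify the concrete limit'' versus ``verify the universal property.''
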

\begin{proof}
Fix an object $c_0\in \C$ and consider a morphism $f:c_0\to c.$
Since $\C$ is strongly connected, there exists a morphism $g:c\to
c_0.$ Using the definition of ${\sf inv}\:\F,$ we get ${\sf
inv}\:\F(f)={\sf inv}\:\F(g)^{-1}.$ Hence, ${\sf inv}\:\F(f)$ is
an isomorphism for any $f.$ Moreover, for any two morphisms
$f_1,f_2:c_0\to c$ we have ${\sf inv}\:\F(f_1)={\sf
inv}\:\F(g)^{-1}={\sf inv}\:\F(f_2).$ It follows that the
isomorphism   ${\sf inv}\:\F(c_0)\cong {\sf inv}\:\F(c)$ is
independent of the choice of $f.$ We denote by $\varphi_{c}:{\sf
inv}\:\F(c_0)\to  \F(c)$ the composition of this isomorphism with
the embedding $ {\sf inv}\:\F(c)\subseteq \F(c).$ It is easy to
see that $\F(f)\varphi_c=\varphi_{c'}$ for any morphism $f:c\to
c'.$ Prove the universal property of this collection of morphisms.
Assume that we have a module $M$ and a collection $\{\psi_c:M\to
\F(c)\}$ with the property $\F(f)\psi_c=\psi_{c'}$ for any
morphism $f:c\to c'.$ Thus
$\F(f_1)\psi_{c_0}=\psi_{c}=\F(f_2)\psi_{c_0}$ for any two
morphisms $f_1,f_2:c_0\to c,$ and hence, ${\rm
Im}\:\psi_{c_0}\subseteq {\sf inv}\:\F(c_0).$ Denote by
$\alpha:M\to {\sf inv}\:\F(c_0)$ the homomorphism induced by
$\psi_{c_0}.$ It is easy to check that $\alpha$ is the unique
homomorphism satisfying the property $\psi_c=\varphi_c\alpha.$
Hence $\lim\:\F={\sf inv}\:\F(c_0).$
\end{proof}

\begin{Example}
Let $G$ be a group considering as a category with one object. Then a functor $M:G\to {\sf Mod}(k)$ is a $kG$-module. Denote by $M^G$ submodule of invariants of $M.$ Then Proposition \ref{Propostion_inv} implies $$\lim\: M=M^G.$$
\end{Example}

\subsection{Higher limits of representations of categories.}
 A {\it representation} of a category $\mathcal{C}$  is
 a functor $\mathcal{F}:\mathcal{C}\to \mathsf{Mod}(k).$ Here we have to consider the category of 'big enough' modules. More presicily, if $\C$ is $U$-small for a Grothendieck universe $U,$ then by ${\sf Mod}(k)$ we denote the category of $U$-small modules.  It is well-known that  the category of representations $\mathsf{Mod}(k)^\mathcal{C}$ is an abelian category with enough projective and injective objects, and a sequence in this category is exact if and only if it is exact objectwise.

For any $c\in\C$ we denote by $P_{c}$ the representation given by $P_{c}=k[\C(c,-)],$
where $k[-]$ denotes the functor of free $k$-module.
The following isomorphism is a $k$-linear analogue of the Yoneda lemma:
\begin{equation}\label{eq_Yoneda_lemma}
{\sf Hom}_{{\sf Mod}(k)^\C}(P_c,\F)\cong \F(c).
\end{equation}
It follows that the functor ${\sf Hom}_{{\sf Mod}(k)^\C}(P_c,-)$ exact, and hence, $P_c$ is a projective representation.

All limits of all representations $\C\to {\sf Mod}(k)$ exist.
Therefore, we have a functor $\lim:{\sf Mod}(k)^\C\to {\sf Mod}(k)$ with the natural isomorphism:
\begin{equation} {{\sf Hom}_{{\sf Mod}(k)^\C}}(M,\mathcal{F})\cong \mathsf{Hom}_k(M,\lim\, \mathcal{F})
\end{equation} for any representation
$\mathcal{F}$ and any $k$-module $M.$ In particular, we have a natural isomorphism
\begin{equation}\label{eq_lim_hom}
\lim\: \F= {{\sf Hom}_{{\sf Mod}(k)^\C}}(k,\mathcal{F}).
\end{equation}

 Since $\lim$ is a right
adjoint functor, it is a left exact additive functor. Hence it has
right derived functors.
\begin{equation}\lim^n={\bf R}^n \: \lim.
\end{equation}
Using \eqref{eq_lim_hom} we get
\begin{equation} \lim^n \F=\mathsf{Ext}_{{\sf Mod}(k)^\C}^n(k,\mathcal{F}).
\end{equation}
It is well-known that if $G$ is a group and $M$ is a $k$-module, then $\lim^n M={\sf Ext}^n_{kG}(k,M)=H^n(G,M)=H^n(\B G,M).$ This can be generalised as follows
\begin{Proposition} Let $\C$ be a category and $M$ be a $k$-module. Then $$\lim^n M \cong H^n(\B \C,M).$$
\end{Proposition}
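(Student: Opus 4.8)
The plan is to compute $\lim^n M = {\sf Ext}^n_{{\sf Mod}(k)^\C}(k,M)$ by writing down an explicit projective resolution of the constant functor $k$ built directly from the nerve $\N\C$, and then to check that applying ${\sf Hom}_{{\sf Mod}(k)^\C}(-,M)$ turns this resolution into the cochain complex ${\sf Hom}({\sf C}_\bullet(\N\C),M)$ that computes $H^*(\N\C,M)\cong H^*(\B\C,M)$.

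First I would set, for each $n\geq 0$,
$$
P_n=\bigoplus_{c_0\to c_1\to\cdots\to c_n} P_{c_n},
$$
the coproduct running over all $n$-simplices $(c_0\to\cdots\to c_n)$ of $\N\C$, where $P_{c_n}=k[\C(c_n,-)]$ is the projective representation of \eqref{eq_Yoneda_lemma} attached to the last vertex. Each $P_n$ is projective, being a coproduct of projectives, and the face and degeneracy maps of $\N\C$ make $P_\bullet$ a simplicial object of ${\sf Mod}(k)^\C$. The associated chain complex (alternating sum of faces), augmented by $P_0=\bigoplus_{c_0}P_{c_0}\to k$, is the candidate resolution $\cdots\to P_1\to P_0\to k\to 0$. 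Here the unique face dropping the last vertex $c_n$ is induced, via the contravariant dependence of $P_c$ on $c$, by the last morphism $c_{n-1}\to c_n$, while the remaining faces keep $c_n$ fixed and act through the canonical identifications of summands.

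Next I would show that this augmented complex is a resolution. Since exactness in ${\sf Mod}(k)^\C$ is detected objectwise, it suffices to evaluate at an arbitrary $c\in\C$. A basis of $P_n(c)=\bigoplus_{c_0\to\cdots\to c_n}k[\C(c_n,c)]$ is given by the chains $c_0\to\cdots\to c_n\to c$, that is, by the $n$-simplices of the nerve of the slice category $\C/c$ of objects over $c$; thus $P_\bullet(c)\cong {\sf C}_\bullet(\N(\C/c))$ as augmented complexes of $k$-modules. The category $\C/c$ has a terminal object ${\sf id}_c$, so appending it to a chain, $(c_0\to\cdots\to c_n\to c)\mapsto(c_0\to\cdots\to c_n\to c\xrightarrow{{\sf id}_c}c)$, is an extra degeneracy of the type in Proposition \ref{Prop_Contractible_criterion} and supplies an explicit contracting homotopy of $P_\bullet(c)\to k$. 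Hence this complex is exact; as $c$ is arbitrary, $P_\bullet\to k$ is a projective resolution.

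Finally I would apply ${\sf Hom}_{{\sf Mod}(k)^\C}(-,M)$ and invoke \eqref{eq_Yoneda_lemma}, which turns the coproduct into a product,
$$
{\sf Hom}_{{\sf Mod}(k)^\C}(P_n,M)\cong\prod_{c_0\to\cdots\to c_n}M(c_n)=\prod_{(\N\C)_n}M={\sf Hom}({\sf C}_n(\N\C),M).
$$
Because $M$ is the constant functor, the map $M(c_{n-1}\to c_n)$ coming from the one nontrivial face is the identity, so each coface reduces to the plain simplicial coboundary and ${\sf Hom}_{{\sf Mod}(k)^\C}(P_\bullet,M)={\sf Hom}({\sf C}_\bullet(\N\C),M)$ as cochain complexes. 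Taking cohomology and using \eqref{eq_homology} gives
$$
\lim^n M={\sf Ext}^n_{{\sf Mod}(k)^\C}(k,M)=H^n(\N\C,M)\cong H^n(\B\C,M).
$$
The step I expect to be the main obstacle is the bookkeeping in the last two paragraphs: matching the functoriality-induced face against the simplicial differential so that the variance of $P_c$ and the (trivial) variance of the constant $M$ cancel, and writing the slice-category contraction cleanly. The conceptual input, namely contractibility of $\C/c$, is already furnished by the corollary on categories with a terminal object.
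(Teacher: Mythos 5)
Your proposal is correct and is essentially the paper's own proof: the paper builds the same resolution (its functor $E$ satisfies ${\sf C}_n(E)\otimes k=\bigoplus_{(\alpha_n,\dots,\alpha_1)\in(\N\C)_n}P_{{\sf codom}(\alpha_n)}$, which is exactly your $P_n$), proves exactness objectwise via the same extra degeneracy appending ${\sf id}_c$ (the paper phrases this through Proposition \ref{Prop_Contractible_criterion} and contractibility of $|E(c)|$, you through the terminal object of the slice and a chain contraction), and then applies ${\sf Hom}_{{\sf Mod}(k)^\C}(-,M)$ with the Yoneda isomorphism \eqref{eq_Yoneda_lemma} to identify the result with ${\sf Hom}({\sf C}_\bullet(\N\C),M)$. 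If anything, your writing ${\sf Hom}_{{\sf Mod}(k)^\C}(P_n,M)$ as a \emph{product} of copies of $M$ is more careful than the paper's displayed direct sum.
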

\begin{proof}
Denote by $E:\Delta^{\rm op}\to {\sf Sets}^\C,$ the simplicial functor whose $E_n(c)$ consists of sequences
$(\beta, \alpha_n,\dots,\alpha_1)$ with
 ${\sf dom}(\alpha_{i+1})={\sf codom}(\alpha_i),$
 ${\sf dom}(\beta)={\sf codom}(\alpha_n)$
 and ${\sf codom}(\beta)=c.$  Face maps act as follows $d_i(\beta,\alpha_n,\dots,\alpha_1)=(\beta,\alpha_n,\dots,\alpha_{i+1}\alpha_i,\dots, \alpha_1)$ for $1\leq i< n ,$ $d_0(\beta,\alpha_n,\dots,\alpha_1)=(\alpha_n,\alpha_{n-1},\dots,\alpha_2)$ and $d_n(\beta,\alpha_n,\alpha_{n-1},\dots,\alpha_1)=(\beta\alpha_n,\alpha_{n-1},\dots,\alpha_1).$ Degeneracies act as follows $s_i(\beta,\alpha_n,\dots,\alpha_1)=(\beta,\alpha_n,\dots,\alpha_{i+1}, {\sf id}_{c_i},\alpha_i, \dots, \alpha_1)),$ where $c_i={\sf codom}(\alpha_i).$ For $f:c\to c'$ the map $E_n(f):E_n(c)\to E_n(c')$ is given by $E_n(f)(\beta, \alpha_n,\dots,\alpha_1)=(f\beta, \alpha_n,\dots,\alpha_1).$ For any $n$ consider the map ${\tilde s}_{n+1}:E_n(c)\to E_{n+1}(c)$ given by ${\tilde s}_{n+1}(\beta,\alpha_n,\dots,\alpha_1)=({\sf id}_c,\beta,\alpha_n,\dots,\alpha_1).$ These maps satisfy the following identities $d_i{\tilde s}_{n+1}={\tilde s}_{n}d_i$ and $d_{n+1}{\tilde s}_{n+1}={\sf id},$ ${\tilde s}_{n+1}s_i=s_i{\tilde s}_n.$ By Proposition \ref{Prop_Contractible_criterion} we obtain that the simplicial set $|E(c)|$ is contractible for any $c.$
 It follows that the homology of the complex ${\sf C}_\bullet(E(c))\otimes k$ are concentrated in zero degree and $H_0({\sf C}_\bullet(E(c))\otimes k)=k.$ Note that there is an isomoprhism of representations:
 $${\sf C}_n(E)\otimes k=\bigoplus_{(\alpha_n,\dots,\alpha_1)\in (\N\C)_n} P_{{\sf codom}(\alpha_n)}.$$
Therefore, the complex presentations ${\sf C}_\bullet(E(-))$ is a projective resolution of the trivial presentation $k.$ Moreover, using the formula
\eqref{eq_Yoneda_lemma} we get
$${\sf Hom}_{\Mod(k)^\C}({\sf C}_n(E)\otimes k,M)\cong \bigoplus_{(\alpha_n,\dots,\alpha_1)\in (\N\C)_n}M\cong {\sf Hom}({\sf C}_n(\N\C),M).$$
One can check that this isomorphism gives an isomorphism of complexes
$${\sf Hom}_{\Mod(k)^\C}({\sf C}_\bullet(E)\otimes k,M)\cong {\sf Hom}({\sf C}_\bullet(\N\C),M).$$
Then applying homology $H^n$ to both sides we get
$\lim^n M\cong H^n(\N\C,M).$ Using \eqref{eq_homology}, we obtain the required statement.
\end{proof}

\begin{Corollary}
If a category $\C$ is contractible and $M$ is a $k$-module then $$\lim^i M=0$$ for $i>0.$
\end{Corollary}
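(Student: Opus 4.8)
The plan is to deduce this immediately from the Proposition just proved, which supplies a natural isomorphism $\lim^n M \cong H^n(\B\C,M)$ for any $k$-module $M$ regarded as a constant representation. This reduces the entire statement to computing the cohomology of the classifying space $\B\C$, so no new homological machinery over representations is needed: the work has already been done in establishing that $\mathsf{C}_\bullet(E(-))$ is a projective resolution of the trivial representation $k$ and identifying $\mathsf{Ext}$ over $\Mod(k)^\C$ with the simplicial cohomology of $\N\C$.

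First I would recall that $\C$ being contractible means, by definition, that $\B\C=|\N\C|$ is a contractible topological space, i.e. homotopy equivalent to a one-point space. Since singular cohomology is a homotopy invariant, this yields $H^n(\B\C,M)\cong H^n(\mathrm{pt},M)$ for all $n$. The cohomology of a point with coefficients in any abelian group is concentrated in degree zero, with $H^0(\mathrm{pt},M)=M$ and $H^n(\mathrm{pt},M)=0$ for $n>0$. Combining this with the isomorphism of the Proposition gives $\lim^i M\cong H^i(\B\C,M)=0$ for $i>0$, which is exactly the claim.

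I do not expect any genuine obstacle here, as the result is an immediate corollary of the preceding Proposition together with the homotopy invariance of cohomology. The only points that warrant a word of care are that the coefficient object $M$ is a $k$-module while the cohomology $H^n(\B\C,M)$ uses only its underlying abelian group, and that homotopy invariance is being applied with these (arbitrary) coefficients; both are standard facts recorded via the isomorphism \eqref{eq_homology} relating simplicial and singular (co)homology. One could alternatively bypass the topological step entirely and argue purely algebraically: contractibility of each $|E(c)|$ already forces $\mathsf{C}_\bullet(E(-))\otimes k$ to resolve $k$, and an analogous contractibility argument for $\N\C$ itself shows the reduced cochain complex computing $H^{>0}(\B\C,M)$ is acyclic, but invoking homotopy invariance is the cleanest route.
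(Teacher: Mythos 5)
Your proof is correct and is exactly the argument the paper intends: the Corollary is stated without proof precisely because it follows immediately from the preceding Proposition's isomorphism $\lim^i M \cong H^i(\B\C,M)$ together with homotopy invariance of cohomology applied to the contractible space $\B\C$. Your added remarks on coefficients and the isomorphism \eqref{eq_homology} are sound but not needed beyond what you already invoke.
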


\subsection{Preserving of higher limits.}
A category $\C$ is said to be $k${\it -acyclic}
 if the reduced homology groups $\tilde H_*(\mathsf{B}\mathcal{C},k)$ vanish. Of course, a contractible category is $k$-acyclic.

Let  $\Phi:\mathcal{C}\to \mathcal{D}$ be a functor. For an object
$d\in D$ we denote by $\Phi\!\!\downarrow\!\!d$ the comma category
(see \cite{Mac_Lane}) whose  objects are pairs $(c,\alpha)$, where
$c\in \mathcal{C}$  and $\alpha\in \mathcal{D}(\Phi(c),d)$. A
morphism in the comma category from  $(c,\alpha)$ to
$(c',\alpha')$ is given by $f:c\to c'$, which satisfies $\alpha'
\Phi(f)=\alpha.$ If $\mathcal{C}=\mathcal{D}$ and
$\Phi=\mathsf{Id}_\mathcal{D}$, the corresponding comma category
is denoted by $\mathcal{D}\!\!\downarrow\!\!d.$

\begin{Proposition}\label{proposition_A}{\rm (see \cite[5.4]{Jackowski-Slominska}).}
Let $\Phi:\mathcal{C}\to \mathcal{D}$ be a functor such that $\Phi\!\!\downarrow\!\!d$ is $k$-acyclic for any
$d\in D.$ Then for any representation $\mathcal{F}$  of
$\mathcal{D}$ there is an isomorphism
\begin{equation}\theta_{\Phi}:\lim^n\mathcal{F}
\overset{\cong}{\longrightarrow}  \lim^n(\mathcal{F} \Phi),
\end{equation}
which is natural in $\mathcal{F}$ and $\Phi.$
\end{Proposition}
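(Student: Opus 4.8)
The plan is to realise both sides as $\sf Ext$-groups over the respective representation categories and to transport one projective resolution along the left Kan extension. Restriction along $\Phi$ gives a functor $\Phi^*:{\sf Mod}(k)^\D\to{\sf Mod}(k)^\C$, $\Phi^*\mathcal{F}=\mathcal{F}\Phi$, which is exact because exactness is objectwise; hence $\lim^n(\mathcal{F}\Phi)={\sf Ext}^n_{{\sf Mod}(k)^\C}(k,\Phi^*\mathcal{F})$. Being an exact functor, $\Phi^*$ admits a left adjoint $\Phi_!$ (the left Kan extension along $\Phi$), and since its right adjoint is exact, $\Phi_!$ preserves projectives. The one object-level computation I need is the value of $\Phi_!$ on the representable projectives $P_c=k[\C(c,-)]$: for every representation $\mathcal{G}$ of $\D$ the adjunction together with the Yoneda isomorphism \eqref{eq_Yoneda_lemma} gives ${\sf Hom}_\D(\Phi_! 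P_c,\mathcal{G})\cong{\sf Hom}_\C(P_c,\mathcal{G}\Phi)\cong(\mathcal{G}\Phi)(c)=\mathcal{G}(\Phi c)\cong{\sf Hom}_\D(P_{\Phi c},\mathcal{G})$, whence $\Phi_! P_c\cong P_{\Phi c}$ (here ${\sf Hom}_\C$, ${\sf Hom}_\D$ denote homomorphisms of representations over $\C$, $\D$).

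Next I would take the canonical projective resolution $P_\bullet:={\sf C}_\bullet(E)\otimes k\to k$ of the trivial representation of $\C$ constructed in the previous Proposition, where ${\sf C}_n(E)\otimes k=\bigoplus_{(\alpha_n,\dots,\alpha_1)\in(\N\C)_n}P_{{\sf codom}(\alpha_n)}$. Applying $\Phi_!$ termwise, using that $\Phi_!$ preserves coproducts and that $\Phi_! P_c\cong P_{\Phi c}$, I obtain a complex $\Phi_! P_\bullet$ of projective representations of $\D$ whose $n$th term is $\bigoplus_{(\alpha_n,\dots,\alpha_1)\in(\N\C)_n}P_{\Phi({\sf codom}(\alpha_n))}$, together with an augmentation $\Phi_! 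P_\bullet\to\Phi_! k$. The heart of the argument is the claim that this augmented complex is a projective resolution of the trivial representation $k$ of $\D$.

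The hard part is establishing this resolution claim, and it is exactly here that the $k$-acyclicity hypothesis enters. Since exactness in ${\sf Mod}(k)^\D$ is objectwise, I would evaluate at a fixed $d\in\D$. As $P_{\Phi({\sf codom}(\alpha_n))}(d)=k[\D(\Phi({\sf codom}(\alpha_n)),d)]$, the group $(\Phi_! P_n)(d)$ is free on the set of pairs consisting of a chain $(\alpha_n,\dots,\alpha_1)$ of composable morphisms in $\C$ and a morphism $\beta:\Phi({\sf codom}(\alpha_n))\to d$ in $\D$. Comparing the face and degeneracy operators, this identifies $(\Phi_! P_\bullet)(d)$ with the complex ${\sf C}_\bullet(E)\otimes k$ attached to the comma category $\Phi\!\!\downarrow\!\!d$ precisely as in the previous Proposition, so that $H_*((\Phi_! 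P_\bullet)(d))\cong H_*(\B(\Phi\!\!\downarrow\!\!d),k)$. By $k$-acyclicity this is $k$ concentrated in degree $0$, with the augmentation inducing the degree-zero isomorphism; hence $\Phi_! P_\bullet\to k$ is a projective resolution of the trivial representation of $\D$.

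Finally I would use $\Phi_! P_\bullet$ to compute $\lim^n\mathcal{F}={\sf Ext}^n_{{\sf Mod}(k)^\D}(k,\mathcal{F})$ as the cohomology of ${\sf Hom}_\D(\Phi_! P_\bullet,\mathcal{F})$. The adjunction $\Phi_!\dashv\Phi^*$ supplies a natural isomorphism of cochain complexes ${\sf Hom}_\D(\Phi_! P_\bullet,\mathcal{F})\cong{\sf Hom}_\C(P_\bullet,\mathcal{F}\Phi)$, whose cohomology is $\lim^n(\mathcal{F}\Phi)$ since $P_\bullet$ resolves $k$ over $\C$. Passing to cohomology therefore yields the desired isomorphism $\theta_\Phi:\lim^n\mathcal{F}\overset{\cong}{\longrightarrow}\lim^n(\mathcal{F}\Phi)$. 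Naturality in $\mathcal{F}$ is immediate from the naturality of the adjunction isomorphism, while naturality in $\Phi$ follows from the functoriality of $\Phi_!$ and of the identification of $(\Phi_! P_\bullet)(d)$ with the comma-category resolution.
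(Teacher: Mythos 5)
The paper does not actually prove this proposition: it is imported from \cite[5.4]{Jackowski-Slominska}, and the only thing the authors add is the Remark that the naturality follows from the proof given there together with the naturality of the obvious map $\theta^0_\Phi$. So there is no internal proof to compare yours against; judged on its own, your argument is correct and gives a self-contained substitute that fits the paper's toolkit well. Realizing both sides as ${\sf Ext}^n(k,-)$, transporting the resolution $P_\bullet={\sf C}_\bullet(E)\otimes k$ along the left Kan extension $\Phi_!$, computing $\Phi_!P_c\cong P_{\Phi c}$ by Yoneda, and identifying $(\Phi_!P_\bullet)(d)$ with the simplicial chain complex of $\N(\Phi\!\!\downarrow\!\!d)$ --- so that $k$-acyclicity of the comma categories is exactly what makes $\Phi_!P_\bullet\to k$ a projective resolution over $\D$ --- is the standard ``cohomological Theorem A'' argument, and it even clarifies the previous Proposition of the paper: the simplicial set $E(c)$ used there is nothing but $\N(\C\!\!\downarrow\!\!c)$.

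Two points need attention. First, ``being an exact functor, $\Phi^*$ admits a left adjoint'' is a non sequitur: $\Phi_!$ exists because $\C$ and $\D$ are small and ${\sf Mod}(k)$ is cocomplete, via the pointwise formula $\Phi_!\mathcal{G}(d)=\colim_{\Phi\downarrow d}\,\mathcal{G}$; exactness of $\Phi^*$ is used only, as you correctly say, to see that $\Phi_!$ preserves projectives. (The same pointwise formula shows $\Phi_!k\cong k$ under the acyclicity hypothesis, which repairs the small mismatch between your augmentation $\Phi_!P_\bullet\to\Phi_!k$ and the claimed resolution of $k$; cleaner still, take the augmentation to be the adjoint of $P_\bullet\to k=\Phi^*k$.) Second, naturality in $\Phi$ --- which the paper explicitly needs for Corollary \ref{corollary_A_naturalness} --- deserves more than an appeal to ``functoriality of $\Phi_!$'': one should say that $\alpha:\Phi_1\to\Phi_2$ induces (as the mate of $\Phi_1^*\Rightarrow\Phi_2^*$) a chain map $\Phi_{2!}P_\bullet\to\Phi_{1!}P_\bullet$ of projective resolutions lifting ${\sf id}_k$, unique up to chain homotopy by the comparison theorem, whence $\alpha_*\theta_{\Phi_1}=\theta_{\Phi_2}$ after applying ${\sf Hom}_{{\sf Mod}(k)^\D}(-,\F)$ and passing to cohomology; alternatively, observe that $\alpha_*\theta_{\Phi_1}$ and $\theta_{\Phi_2}$ are both morphisms of $\delta$-functors in $\F$ out of the universal $\delta$-functor $(\lim^n)_{n\geq 0}$ which agree in degree $0$ with the obvious map, hence are equal. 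With these repairs the proof is complete.
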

\begin{Remark}
In the previous proposition by naturality in $\Phi$ we mean
that for a natural transformation $\alpha:\Phi_1\to \Phi_2$ of
functors satisfying the condition of the proposition we have
$\alpha_*\theta_{\Phi_1}=\theta_{\Phi_2},$ where
$\alpha_*:\lim^n(\mathcal{F}\Phi_1)\to
\lim^n(\mathcal{F}\Phi_2)$ is the map induced by $\alpha.$ In
\cite{Jackowski-Slominska} the fact that the isomorphism is natural is not stated explicitly, but it follows easily from the proof and
the fact that the obvious map $\theta^0_\Phi:\lim\:\mathcal{F}\to
\lim(\mathcal{F} \Phi)$ is natural in $\mathcal{F}$ and $\Phi.$
\end{Remark}
\begin{Corollary}\label{corollary_A_naturalness}
Let $\Phi_1,\Phi_2:\mathcal{C}\to \mathcal{D}$ be functors
satisfying the condition of proposition \ref{proposition_A} and
$\alpha:\Phi_1\to \Phi_2$ be a natural transformation. The induced
homomorphism $\alpha_*:\lim^n(\mathcal{F}\Phi_1)\to
\lim^n(\mathcal{F}\Phi_2)$ is an isomorphism.
\end{Corollary}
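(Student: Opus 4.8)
The plan is to obtain the corollary as a purely formal consequence of Proposition \ref{proposition_A} together with the naturality of $\theta_\Phi$ in the variable $\Phi$ recorded in the preceding Remark; essentially no new analysis is required beyond bookkeeping. First I would apply Proposition \ref{proposition_A} separately to $\Phi_1$ and to $\Phi_2$. Both functors satisfy the standing hypothesis that the comma categories $\Phi_i\!\!\downarrow\!\!d$ are $k$-acyclic for every $d$, so the proposition yields isomorphisms $\theta_{\Phi_1}:\lim^n\F\to\lim^n(\F\Phi_1)$ and $\theta_{\Phi_2}:\lim^n\F\to\lim^n(\F\Phi_2)$, each natural in $\F$.

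Next I would invoke the naturality in $\Phi$. By the Remark, the natural transformation $\alpha:\Phi_1\to \Phi_2$ induces the map $\alpha_*:\lim^n(\F\Phi_1)\to\lim^n(\F\Phi_2)$, and the triangle commutes, i.e. $\alpha_*\theta_{\Phi_1}=\theta_{\Phi_2}$. Since $\theta_{\Phi_1}$ is invertible, I can rearrange this relation to obtain $\alpha_*=\theta_{\Phi_2}\theta_{\Phi_1}^{-1}$, a composite of two isomorphisms, and therefore itself an isomorphism. This establishes the claim.

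The only genuine content lies in the naturality relation $\alpha_*\theta_{\Phi_1}=\theta_{\Phi_2}$, so the real obstacle, already discharged in the Remark and in the proof of Proposition \ref{proposition_A}, is the compatibility of the comparison maps $\theta_\Phi$ with natural transformations between admissible functors. Were this not at hand, I would have to unwind the construction behind Proposition \ref{proposition_A}: the isomorphism $\theta_\Phi$ arises from comparing projective resolutions of the trivial representation $k$ (the bar-type resolution ${\sf C}_\bullet(E)$ built in the earlier proof, now formed over $\C$ and over $\D$), and one would check that $\alpha$ induces a chain map of these resolutions, compatible up to homotopy, which on ${\sf Ext}$-groups yields exactly $\theta_{\Phi_2}=\alpha_*\theta_{\Phi_1}$. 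At the level of the corollary itself, however, the argument is formal: two sides of a commuting triangle are isomorphisms, hence so is the third.
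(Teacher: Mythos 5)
Your proof is correct and coincides with the paper's own argument: both apply Proposition \ref{proposition_A} to $\Phi_1$ and $\Phi_2$ and use the naturality relation $\alpha_*\theta_{\Phi_1}=\theta_{\Phi_2}$ recorded in the Remark, so that $\alpha_*=\theta_{\Phi_2}\theta_{\Phi_1}^{-1}$ is a composite of isomorphisms. The extra paragraph on unwinding the resolutions behind $\theta_\Phi$ is not needed, since the paper (like you) takes the naturality in $\Phi$ as already established.
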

\begin{proof}
Since the isomorphism in proposition \ref{proposition_A} is
natural in $\Phi,$ there is a commutative diagram
$$\xymatrix{
 && \lim^n\mathcal{F}\ar[dll]_{\theta_{\Phi_1}}^\cong\ar[drr]^{\theta_{\Phi_2}}_\cong  && \\
\lim^n(\mathcal{F}\Phi_1)\ar[rrrr]^{\alpha_*} && &&
\lim^n(\mathcal{F}\Phi_2). }$$ Hence, $\alpha_*$ is an
isomorphism.
\end{proof}

\begin{Lemma}\label{Lemma_coproduct_projections}
Let $\mathcal{F}$ be a representation of a category $\C$ with coproducts. Then the morphisms $i_1,i_2:c\to c\sqcup c$ induce isomorphsms
\begin{equation}(i_k)_*:\lim^n\: \mathcal{F}(c) \overset{\cong}{\longrightarrow} \lim^n\:  \mathcal{F}(c\sqcup c).
\end{equation}
\end{Lemma}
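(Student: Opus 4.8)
The plan is to exhibit the two maps as instances of Corollary~\ref{corollary_A_naturalness}. To make sense of the notation, I read $\mathcal F(c)$ and $\mathcal F(c\sqcup c)$ as the two representations $c\mapsto \mathcal F(c)$ and $c\mapsto \mathcal F(c\sqcup c)$; that is, writing $D\colon\C\to\C$ for the doubling endofunctor $D(c)=c\sqcup c$, $D(f)=f\sqcup f$ (which exists because $\C$ has coproducts, after fixing a choice of $c\sqcup c'$ for each pair), the assertion becomes that the coproduct inclusions induce isomorphisms $(i_k)_*\colon\lim^n\mathcal F\overset{\cong}{\longrightarrow}\lim^n(\mathcal F D)$.

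First I would observe that the inclusions assemble into natural transformations $i_1,i_2\colon\mathsf{Id}_\C\to D$: naturality of $i_k$ at a morphism $f\colon c\to c'$ is precisely the identity $(f\sqcup f)\circ i_k^c=i_k^{c'}\circ f$, which holds by the defining property of $f\sqcup f$ as the morphism induced on coproducts. Thus each $i_k$ is a natural transformation between the endofunctors $\mathsf{Id}_\C$ and $D$, and $(i_k)_*$ is exactly the induced map on higher limits appearing in Corollary~\ref{corollary_A_naturalness}.

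The heart of the proof is to check the hypothesis of Proposition~\ref{proposition_A} for both functors, namely that $\mathsf{Id}_\C\!\!\downarrow\!\!d$ and $D\!\!\downarrow\!\!d$ are $k$-acyclic for every $d$. For the identity this is immediate: $\mathsf{Id}_\C\!\!\downarrow\!\!d=\C\!\!\downarrow\!\!d$ has the terminal object $(d,\mathsf{id}_d)$, so it is contractible and hence $k$-acyclic. The step I expect to be the main obstacle is the acyclicity of $D\!\!\downarrow\!\!d$, and I would establish it by showing that this comma category has pairwise coproducts, so that Proposition~\ref{lemma_coproduct_contractible} makes it contractible (and a contractible category is $k$-acyclic). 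Concretely, an object of $D\!\!\downarrow\!\!d$ is a pair $(c,\alpha)$ with $\alpha\colon c\sqcup c\to d$, equivalently a triple $(c,\alpha_1,\alpha_2)$ with $\alpha_j=\alpha\circ i_j^c\colon c\to d$; given $(c,\alpha_1,\alpha_2)$ and $(c',\beta_1,\beta_2)$, I would take as their coproduct the object $(c\sqcup c',[\alpha_1,\beta_1],[\alpha_2,\beta_2])$, whose two structure maps $c\sqcup c'\to d$ are induced by the universal property of $\sqcup$ in $\C$. One then verifies that the inclusions $c\to c\sqcup c'$ and $c'\to c\sqcup c'$ are morphisms in $D\!\!\downarrow\!\!d$ and that this object is universal among such; both checks reduce directly to the universal property of coproducts in $\C$.

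With both $\mathsf{Id}_\C$ and $D$ satisfying the hypothesis of Proposition~\ref{proposition_A}, Corollary~\ref{corollary_A_naturalness} applied to the natural transformation $i_k\colon\mathsf{Id}_\C\to D$ gives that $(i_k)_*\colon\lim^n(\mathcal F\,\mathsf{Id}_\C)\to\lim^n(\mathcal F D)$ is an isomorphism, which is the desired statement. Apart from the coproduct construction on $D\!\!\downarrow\!\!d$, every step is formal.
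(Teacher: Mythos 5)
Your proof is correct and follows essentially the same route as the paper's: the paper also identifies the comma category ${\sf sq}\!\downarrow\! c_0$ with the category of triples $(c,\alpha_1,\alpha_2)$, shows it has pairwise coproducts (given by $(c\sqcup c',\alpha_1+\alpha_1',\alpha_2+\alpha_2')$), concludes contractibility from Proposition~\ref{lemma_coproduct_contractible}, and then applies Corollary~\ref{corollary_A_naturalness} to the natural transformation $i_k:{\sf Id}_\C\to{\sf sq}$. The only (harmless) difference is that you also spell out the acyclicity of ${\sf Id}_\C\!\downarrow\! d$, which the paper leaves implicit.
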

\begin{proof} We denote by ${\sf sq}$ the functor $\mathcal C \to \mathcal C$ given by $c\mapsto c\sqcup c.$
An object of the comma category $\mathsf{sq}\!\downarrow\! c_0$ is a pair $(c,\alpha:c\sqcup c\to c_0).$ An arrow $\alpha:c\sqcup c\to c_0$ is defined by the pair of arrows $\alpha_1=\alpha\circ i_1:c\to c_0$ and $\alpha_2=\alpha\circ i_2:c\to c_0.$ Thus the category $\mathsf{sq}\! \downarrow\! c_0$ is isomorphic to the category, whose objects are triples $(c,\alpha_1:c\to c_0,\alpha_2:c\to c_0)$ and morphisms $f:(c,\alpha_1,\alpha_2)\to (c',\alpha_1',\alpha_2')$ are morphisms $f:c\to c'$ such that $\alpha_1'f=\alpha_1$ and $\alpha_2'f=\alpha_2.$ It is easy to see that the object $(c\sqcup c',\alpha_1+\alpha_1',\alpha_2+\alpha_2')$ is the coproduct of objects $(c,\alpha_1,\alpha_2)$ and $(c',\alpha_1',\alpha_2')$ in this category. Hence the category  $\mathsf{sq}\!\downarrow\! c_0$ has coproducts and by Lemma \ref{lemma_coproduct_contractible} it is contractible. Therefore, the functor ${\sf sq}$ satisfies the condition of Proposition \ref{proposition_A}. Finally, applying corollary \ref{corollary_A_naturalness} to the natural transformation $i_k:{\sf Id}_{\mathcal{C}}\to {\sf sq},$ we obtain the claimed isomorphism.
\end{proof}

\subsection{Monoadditive representations.}

 Let $\mathcal{C},\D$ be categories with pairwise coproducts. For $c_1,c_2\in \C$ we denote by $i_k=i_k(c_1,c_2):c_k\to c_1\sqcup c_2$  the universal morphisms $c_1 \overset{i_1}\to c_1\sqcup c_2 \overset{i_2}\leftarrow c_2$.
 For any functor $\mathcal{F}:\mathcal{C}\to \D$ there is a
morphism
\begin{equation}\label{eq_Fi1_Fi2}(\mathcal{F}(i_1),\mathcal{F}(i_2)):\mathcal{F}(c_1)\sqcup \mathcal{F}(c_2) \longrightarrow \mathcal{F}(c_1\sqcup c_2),
\end{equation}
 The functor
$\mathcal{F}$  is said to be {\bf additive} (resp.
{\bf monoadditive, split monoadditive}) if \eqref{eq_Fi1_Fi2}  is an
isomorphism (resp. monomorphism, split monomorphism in the category of bifunctors).
Consider the functors  ${\sf sq}:\C\to \C$ and ${\sf sq}:\D\to \D$ given by ${\sf sq}(x)=x\sqcup x.$ Then \eqref{eq_Fi1_Fi2} gives a natural transformation
$${\sf T}_\F:{\sf sq}\circ\F \longrightarrow \F\circ {\sf sq}.$$

Let $\D=\Mod(k).$ Starting from a monoadditive representation $\mathcal F:\C\to \Mod(k)$ we construct the representation
$\Sigma\mathcal{F}:=\mathrm{coker}({\sf T}_{\mathcal F})$ and get the short exact sequence:
\begin{equation}0\longrightarrow \mathcal{F}\oplus \mathcal F\overset{\sf T_{\mathcal F}}\longrightarrow \mathcal{F}\circ {\sf sq} \longrightarrow \Sigma\mathcal{F}\longrightarrow 0.
\end{equation}
A monoadditive representation $\F$ is said to be {\bf $\boldsymbol{n}$-monoadditive}, if $\Sigma \F$ is $(n-1)$-monoadditive, where $1$-monoadditive representation is just a monoadditive representation. For an $n$-monoadditive representation $\F$ we can define $\Sigma^m \F$ for $m\leq n$ by induction. If a representation $\F$ is $n$-monoadditive for any $n\geq 1,$  $\F$ is called {\bf $\boldsymbol{\infty}$-monoadditive}. For example, additive representations are $\infty$-monoadditive.

\begin{Proposition}\label{proposition_cokernel_limits}
Let $\mathcal{C}$ be a category with pairwise coproducts and $\mathcal{F}$
be a monoadditive representation of $\mathcal{C}.$ Then there is
an isomorphism
\begin{equation}\lim^n\:\mathcal{F}\cong \lim^{n-1}\: \Sigma\mathcal{F}
\end{equation}
for any $n\geq 0.$
\end{Proposition}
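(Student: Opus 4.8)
The plan is to apply the derived functors $\lim^\bullet$ to the short exact sequence
$$0\longrightarrow \F\oplus\F \overset{{\sf T}_\F}\longrightarrow \F\circ{\sf sq}\longrightarrow \Sigma\F\longrightarrow 0$$
and read the answer off the resulting long exact sequence. Since $\lim^n$ is an additive functor, it splits the first term as $\lim^n(\F\oplus\F)\cong\lim^n\F\oplus\lim^n\F$, and under this identification the map induced by ${\sf T}_\F=(\F(i_1),\F(i_2))$ becomes $(a_1,a_2)\mapsto(i_1)_*a_1+(i_2)_*a_2$, where $(i_k)_*:\lim^n\F\to\lim^n(\F\circ{\sf sq})$ is the map induced by the natural transformation $i_k:{\sf Id}_\C\to{\sf sq}$.

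Next I would invoke Lemma \ref{Lemma_coproduct_projections}, which says that each $(i_k)_*$ is an \emph{isomorphism}. Two consequences follow at once. First, $\lim^n{\sf T}_\F$ is surjective for every $n\geq 0$, because even its restriction $(i_1)_*$ to the first summand is already surjective. Second, its kernel is the graph $\{(a,-(i_2)_*^{-1}(i_1)_*a):a\in\lim^n\F\}$, which projects isomorphically onto $\lim^n\F$; hence $\ker(\lim^n{\sf T}_\F)\cong\lim^n\F$.

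With these two facts the long exact sequence collapses. Surjectivity of $\lim^{n-1}{\sf T}_\F$ forces the map $\lim^{n-1}(\F\circ{\sf sq})\to\lim^{n-1}\Sigma\F$ to vanish, so the connecting homomorphism $\delta:\lim^{n-1}\Sigma\F\to\lim^n(\F\oplus\F)$ is injective with image $\ker(\lim^n{\sf T}_\F)$. Combining this with the kernel computation yields, for $n\geq 1$,
$$\lim^{n-1}\Sigma\F \overset{\delta}{\cong} \ker(\lim^n{\sf T}_\F)\cong\lim^n\F,$$
which is the claimed isomorphism. For $n=0$ one only needs $\lim^0\F=\lim\F=0$, consistent with the boundary value $\lim^{-1}\Sigma\F=0$; this follows either from left-exactness of $\lim$ (which makes $\lim^0{\sf T}_\F$ injective, forcing its kernel $\lim^0\F$ to vanish) or directly from monoadditivity, since a compatible family $x$ satisfies $\F(i_1)x_c=x_{c\sqcup c}=\F(i_2)x_c$, whence $(\F(i_1),\F(i_2))(x_c,-x_c)=0$ and injectivity of ${\sf T}_\F$ gives $x_c=0$.

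The diagram chase itself is routine; the only place where real content enters is the identification of $\lim^n{\sf T}_\F$ together with the use of Lemma \ref{Lemma_coproduct_projections} to see that each inclusion $i_k$ induces an isomorphism on higher limits — this is precisely what simultaneously yields surjectivity and the graph description of the kernel. I expect the main care to lie in tracking the additive splitting, so that the kernel is correctly recognized as abstractly isomorphic to $\lim^n\F$; naturality of the isomorphism would require a little more bookkeeping, but it is not needed for the bare statement.
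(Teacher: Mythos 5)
Your proof is correct and takes essentially the same route as the paper: both apply the long exact sequence of higher limits to the short exact sequence $0\to\F\oplus\F\to\F\circ{\sf sq}\to\Sigma\F\to 0$, invoke Lemma \ref{Lemma_coproduct_projections} to conclude that $\lim^n {\sf T}_\F$ is surjective (so the maps onto $\lim^n\Sigma\F$ vanish and the connecting map embeds $\lim^{n-1}\Sigma\F$ as the kernel), and then identify that kernel with $\lim^n\F$ — your graph description of the kernel is exactly the paper's bicartesian-square argument in different words. The only (harmless) difference is that you spell out the $n=0$ boundary case explicitly, which the paper leaves implicit.
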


\begin{proof}
Consider the long exact sequence of higher limits assotiated with
the short exact sequence $0\longrightarrow
\mathcal{F} \oplus \mathcal{F}
\longrightarrow \mathcal{F}\circ{\sf sq} \longrightarrow
\Sigma \mathcal{F}\longrightarrow 0.$ Using lemma
\ref{Lemma_coproduct_projections}, we obtain that the homomorphism
$((i_1)_*,(i_2)_*):\lim^n\:\mathcal{F}\oplus
\lim^n\:\mathcal{F}\longrightarrow
\lim^n\mathcal{F}\circ {\sf sq}$ is an epimorphism, and hence the
map $\lim^n\mathcal{F}\circ{\sf sq}\longrightarrow
\lim^n\:\Sigma\mathcal{F}$  vanishes. Therefore  we get the short exact sequences
\begin{equation}0\longrightarrow \lim^n\: \Sigma\mathcal{F}\longrightarrow \lim^{n+1}\:\mathcal{F}\oplus \lim^{n+1}\:\mathcal{F}\longrightarrow \lim^{n+1}\:\mathcal{F}\circ{\sf sq} \longrightarrow 0,
\end{equation} which are the totalisations of the bicartesian squares
$$\xymatrix{
\lim^n\:\Sigma\mathcal{F} \ar[rr]^{s_1}\ar[d]^{s_2} && \lim^{n+1}\:\mathcal{F}\ar[d]^{(i_1)_*}_{\cong} \\
\lim^{n+1}\:\mathcal{F}\ar[rr]^{(i_2)_*}_\cong &&
\lim^{n+1}\mathcal{F}\circ{\sf sq}.}$$ By Lemma
\ref{Lemma_coproduct_projections} $(i_1)_*$ and $(i_2)_*$ are
isomorphisms, and hence $s_1$ and $s_2$ are isomorphisms.
\end{proof}

\begin{Corollary}\label{corollary_monoadditive}
If $\mathcal{F}$ is an $n$-monoadditive representation of a category
with pairwise coproducts, then $\lim^i\:\mathcal{F}=0$ for $0\leq i<n$ and $\lim^i\:\F=\lim^{i-n} \Sigma^n\F.$
\end{Corollary}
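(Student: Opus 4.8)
The plan is to argue by induction on $n$, with Proposition \ref{proposition_cokernel_limits} as the single engine of the proof. The base case $n=1$ is exactly the monoadditive situation: for a monoadditive $\F$, Proposition \ref{proposition_cokernel_limits} gives $\lim^i\F\cong\lim^{i-1}\Sigma\F$ for every $i\geq 0$. Taking $i=0$ yields
\begin{equation*}
\lim^0\F\cong\lim^{-1}\Sigma\F=0,
\end{equation*}
since the negative right derived functors of the left exact functor $\lim$ vanish; this establishes the vanishing range $0\leq i<1$. The identity $\lim^i\F\cong\lim^{i-1}\Sigma^1\F$ is then literally the statement of the Proposition, so the case $n=1$ is settled.

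For the inductive step I would assume the Corollary for $(n-1)$-monoadditive representations and take $\F$ to be $n$-monoadditive. The key is to unwind the definition: $n$-monoadditivity says that $\F$ is in particular monoadditive and that $\Sigma\F$ is $(n-1)$-monoadditive. Hence both tools are simultaneously available — Proposition \ref{proposition_cokernel_limits} applied to $\F$, and the inductive hypothesis applied to the representation $\Sigma\F$ of the same category $\mathcal{C}$ with pairwise coproducts. First I would record
\begin{equation*}
\lim^i\F\cong\lim^{i-1}\Sigma\F,
\end{equation*}
and then feed in the two conclusions of the inductive hypothesis for $\Sigma\F$, namely that $\lim^j\Sigma\F=0$ for $0\leq j<n-1$ and that $\lim^j\Sigma\F\cong\lim^{j-(n-1)}\Sigma^{n-1}(\Sigma\F)$.

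The vanishing statement then splits into the cases $i=0$ and $1\leq i<n$. For $i=0$ the target is again $\lim^{-1}\Sigma\F=0$; for $1\leq i<n$ we have $0\leq i-1<n-1$, so $\lim^{i-1}\Sigma\F=0$ by the inductive hypothesis, which forces $\lim^i\F=0$ and covers the whole range $0\leq i<n$. For the remaining limits I would compose the two isomorphisms and use $\Sigma^{n-1}(\Sigma\F)=\Sigma^n\F$, which is nothing but the inductive definition of the iterated suspension:
\begin{equation*}
\lim^i\F\cong\lim^{i-1}\Sigma\F\cong\lim^{(i-1)-(n-1)}\Sigma^n\F=\lim^{i-n}\Sigma^n\F.
\end{equation*}

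I do not expect a genuine obstacle here, as all the content is carried by Proposition \ref{proposition_cokernel_limits} and the rest is bookkeeping. The only points deserving care are the boundary conventions — that $\lim^m=0$ for $m<0$, so the formula $\lim^{i-n}\Sigma^n\F$ is automatically consistent with the vanishing range and the second assertion in fact already contains the first — and the purely formal observation that $n$-monoadditivity decomposes as \emph{monoadditivity of $\F$ together with $(n-1)$-monoadditivity of $\Sigma\F$}, which is precisely what allows the Proposition and the inductive hypothesis to be applied in succession.
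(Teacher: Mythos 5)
Your proof is correct and is essentially the argument the paper intends: the Corollary is stated without proof precisely because it follows by iterating Proposition \ref{proposition_cokernel_limits}, using that $n$-monoadditivity means $\F$ is monoadditive with $\Sigma\F$ being $(n-1)$-monoadditive, which is exactly your induction. Your attention to the boundary convention $\lim^{m}=0$ for $m<0$ (so that the case $i=0$ is covered and the vanishing statement is subsumed in the isomorphism $\lim^i\F\cong\lim^{i-n}\Sigma^n\F$) is a worthwhile clarification of a point the paper leaves implicit.
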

\begin{Corollary}\label{Corollary_infty_monoadditive}
If $\mathcal{F}$ is an $\infty$-monoadditive representation of a category with pairwise
coproducts, then $\lim^i\mathcal{F}=0$ for any $i\geq 0.$
\end{Corollary}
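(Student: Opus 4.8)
The plan is to deduce this directly from Corollary \ref{corollary_monoadditive}, which already carries essentially all of the homological content; being $\infty$-monoadditive means precisely being $n$-monoadditive for every $n\geq 1$, so for each fixed cohomological degree we are free to invoke the vanishing range of that corollary for an arbitrarily large $n$. Concretely, I would fix an arbitrary $i\geq 0$ and set $n:=i+1$. Since $\F$ is $\infty$-monoadditive, it is in particular $(i+1)$-monoadditive, so each $\Sigma^m\F$ with $m\leq i+1$ is defined and monoadditive and Corollary \ref{corollary_monoadditive} applies. That corollary gives $\lim^j\F=0$ for all $0\leq j<n=i+1$, a range that includes $j=i$; hence $\lim^i\F=0$. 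As $i$ was arbitrary, this yields $\lim^i\F=0$ for every $i\geq 0$.

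The only point that needs care is that the quantifiers line up correctly: Corollary \ref{corollary_monoadditive} is formulated for a fixed $n$ under the weaker hypothesis that $\F$ is merely $n$-monoadditive, and one must genuinely use the $\infty$-monoadditivity assumption to feed it a value of $n$ exceeding any prescribed degree $i$. There is no real obstacle here, since the substantive work, namely the dimension-shifting isomorphism $\lim^n\F\cong \lim^{n-1}\Sigma\F$ resting on the contractibility argument of Lemma \ref{Lemma_coproduct_projections} together with the bicartesian-square splitting, is already packaged inside Proposition \ref{proposition_cokernel_limits} and Corollary \ref{corollary_monoadditive}.

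Alternatively, I could give a self-contained induction bypassing Corollary \ref{corollary_monoadditive}. Iterating Proposition \ref{proposition_cokernel_limits}, which applies at each stage because every iterate $\Sigma^m\F$ is again monoadditive, gives $\lim^i\F\cong \lim^{i-1}\Sigma\F\cong\dots\cong \lim^0\Sigma^i\F$, and one further application of Proposition \ref{proposition_cokernel_limits} at $n=0$ together with the convention that negative derived functors vanish forces $\lim^0\Sigma^i\F\cong \lim^{-1}\Sigma^{i+1}\F=0$. I would present the short quantifier argument as the main proof and only remark on this variant, since both reduce the statement to results already established.
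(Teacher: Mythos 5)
Your proof is correct and is essentially the paper's own (implicit) argument: the corollary is stated as an immediate consequence of Corollary~\ref{corollary_monoadditive}, obtained exactly as you do by fixing $i$ and invoking the vanishing range $0\leq i<n$ for some $n>i$ (e.g.\ $n=i+1$), which the $\infty$-monoadditivity hypothesis licenses. Your alternative remark — iterating Proposition~\ref{proposition_cokernel_limits} down to $\lim^0\Sigma^i\F\cong\lim^{-1}\Sigma^{i+1}\F=0$ — is also valid within the paper's conventions, since that proposition is stated for all $n\geq 0$, but it is just the unpacked form of the same dimension-shifting argument.
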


\begin{Lemma}\label{Lemma_split_mono_properties}
Let $\C,\D$ be categories with pairwise coproducts and $\F:\C\to \D$ be a split monoadditive functor. Then:
\begin{enumerate}
\item if $\Phi:\D\to {\mathcal E}$ is an additive functor, then $\Phi\circ\F$ is a split monoadditive functor;
\item if ${\mathcal F}':\C\to \D$ is a retract of $\F,$ then ${\mathcal F}'$ is a split monoadditive functor.
\end{enumerate}
\end{Lemma}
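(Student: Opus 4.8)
The whole argument rests on recording how the comparison morphism \eqref{eq_Fi1_Fi2} transforms under composition with a functor and under passage to a retract. Write $\sigma_\F=(\F(i_1),\F(i_2))$ for the natural transformation of bifunctors $\C\times\C\to\D$ from $A_\F\colon(c_1,c_2)\mapsto\F(c_1)\sqcup\F(c_2)$ to $B_\F\colon(c_1,c_2)\mapsto\F(c_1\sqcup c_2)$; split monoadditivity of $\F$ means exactly that $\sigma_\F$ admits a natural retraction $\rho_\F$ with $\rho_\F\sigma_\F=\mathrm{id}_{A_\F}$. The plan is first to isolate the single structural fact behind both parts: $\sigma$ is natural in the functor argument, i.e. for every natural transformation $\tau\colon\F_1\to\F_2$ one has $\sigma_{\F_2}\circ A_\tau=B_\tau\circ\sigma_{\F_1}$, where $A_\tau$ and $B_\tau$ are the induced maps $(c_1,c_2)\mapsto\tau_{c_1}\sqcup\tau_{c_2}$ and $(c_1,c_2)\mapsto\tau_{c_1\sqcup c_2}$. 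I would verify this by precomposing with the two coproduct inclusions and invoking naturality of $\tau$ at $i_1$ and $i_2$, together with $\sigma_\F$ restricting to $\F(i_k)$ on the $k$-th inclusion.

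For part (1) I would establish the interchange identity $\sigma_{\Phi\F}=\Phi(\sigma_\F)\circ\sigma_\Phi$, where $\sigma_\Phi$ is the comparison morphism of $\Phi$ evaluated at $(\F c_1,\F c_2)$ — an isomorphism, since $\Phi$ is additive — and $\Phi(\sigma_\F)$ is $\sigma_\F$ with $\Phi$ applied termwise. This too is checked on the two inclusions, using $\sigma_\F\circ k_1=\F(i_1)$ for the $\D$-inclusions $k_1,k_2$ and functoriality of $\Phi$. Applying $\Phi$ to $\rho_\F\sigma_\F=\mathrm{id}$ shows that $\Phi(\sigma_\F)$ is a split monomorphism with retraction $\Phi(\rho_\F)$, so $r:=\sigma_\Phi^{-1}\circ\Phi(\rho_\F)$ satisfies $r\,\sigma_{\Phi\F}=\sigma_\Phi^{-1}\Phi(\rho_\F)\Phi(\sigma_\F)\sigma_\Phi=\mathrm{id}$, exhibiting $\Phi\F$ as split monoadditive.

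For part (2) let $\iota\colon\mathcal{F}'\to\F$ and $\pi\colon\F\to\mathcal{F}'$ witness the retract, with $\pi\iota=\mathrm{id}_{\mathcal{F}'}$. Instantiating the naturality identity at $\tau=\iota$ gives $\sigma_\F\circ A_\iota=B_\iota\circ\sigma_{\mathcal{F}'}$. Then set $r:=A_\pi\circ\rho_\F\circ B_\iota$ and compute $r\,\sigma_{\mathcal{F}'}=A_\pi\rho_\F B_\iota\sigma_{\mathcal{F}'}=A_\pi\rho_\F\sigma_\F A_\iota=A_\pi A_\iota=\mathrm{id}_{A_{\mathcal{F}'}}$, where the middle equality is the naturality identity, the next uses $\rho_\F\sigma_\F=\mathrm{id}$, and the last uses bifunctoriality of the coproduct together with $\pi\iota=\mathrm{id}$. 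This makes $\mathcal{F}'$ split monoadditive.

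The computations are short diagram chases, and the only genuine content — the step I would set up most carefully — is the naturality of $\sigma$ in the functor variable, from which both the interchange identity of part (1) and the comparison identity of part (2) are immediate; everything after that is formal manipulation of retractions. One bookkeeping point to keep honest throughout is that the definition demands a \emph{split monomorphism in the category of bifunctors}, so each retraction produced must itself be a natural transformation of bifunctors and not merely a pointwise section; this is automatic here, since each $r$ is a composite of natural transformations ($\sigma_\Phi^{-1}$ and $\Phi(\rho_\F)$ in part (1); $A_\pi$, $\rho_\F$, $B_\iota$ in part (2)).
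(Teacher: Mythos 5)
Your proposal is correct: both the interchange identity $\sigma_{\Phi\F}=\Phi(\sigma_\F)\circ\sigma_\Phi$ in part (1) and the naturality identity $\sigma_\F\circ A_\iota=B_\iota\circ\sigma_{\mathcal F'}$ in part (2) hold (each checked on the coproduct inclusions), and the retractions you build from them are genuine natural transformations of bifunctors, as the definition requires. The paper dismisses this lemma with ``The proof is obvious,'' and your argument is exactly the formal diagram chase the authors had in mind, so there is nothing further to compare.
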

\begin{proof}
The proof is obvious.
\end{proof}

\begin{Proposition}
If $\F$ is a split monoadditive representation, then $\Sigma\F$ is a split monoadditive representation.
\end{Proposition}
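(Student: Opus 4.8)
The plan is to exhibit $\Sigma\F$ as a retract of the representation $\F\circ{\sf sq}$ and to check that this larger representation is split monoadditive; the conclusion then drops out of part (2) of Lemma~\ref{Lemma_split_mono_properties}.

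First I would show that the defining sequence
$$0\longrightarrow \F\oplus\F \overset{{\sf T}_\F}{\longrightarrow} \F\circ{\sf sq}\longrightarrow \Sigma\F\longrightarrow 0$$
splits in $\Mod(k)^\C$. By hypothesis the comparison map $(\F(i_1),\F(i_2))\colon \F(c_1)\oplus\F(c_2)\to\F(c_1\sqcup c_2)$ admits a retraction $r_{c_1,c_2}$ which is natural as a morphism of bifunctors. Restricting to the diagonal $c_1=c_2=c$ yields $r_{c,c}\colon \F(c\sqcup c)\to\F(c)\oplus\F(c)$, which is a retraction of ${\sf T}_\F(c)=(\F(i_1),\F(i_2))$; since $r$ is natural in both variables, its restriction $c\mapsto r_{c,c}$ is natural in $c$, hence is a morphism of representations $\F\circ{\sf sq}\to\F\oplus\F$ splitting ${\sf T}_\F$. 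Thus ${\sf T}_\F$ is a split monomorphism and $\Sigma\F$ is a retract of $\F\circ{\sf sq}$.

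Next I would prove that $\F\circ{\sf sq}$ is split monoadditive. The point is that ${\sf sq}\colon\C\to\C$ is additive: its comparison map ${\sf sq}(c_1)\sqcup{\sf sq}(c_2)\to{\sf sq}(c_1\sqcup c_2)$ is the rearrangement isomorphism $\sigma\colon(c_1\sqcup c_1)\sqcup(c_2\sqcup c_2)\overset{\cong}{\longrightarrow}(c_1\sqcup c_2)\sqcup(c_1\sqcup c_2)$ provided by associativity and symmetry of the coproduct. A direct diagram chase shows that $\F(\sigma)$ identifies the comparison map of $\F\circ{\sf sq}$ at $(c_1,c_2)$ with the comparison map of $\F$ at $(c_1\sqcup c_1,\,c_2\sqcup c_2)$. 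Since the latter is split monic by hypothesis and $\F(\sigma)$ is an isomorphism, composing the retraction for $\F$ with $\F(\sigma)$ produces a retraction for $\F\circ{\sf sq}$, and it stays natural in $(c_1,c_2)$ because $\sigma$ is. (This is the evident precomposition analogue of part (1) of Lemma~\ref{Lemma_split_mono_properties}.) Combining the two steps, $\Sigma\F$ is a retract of the split monoadditive functor $\F\circ{\sf sq}$, so part (2) of Lemma~\ref{Lemma_split_mono_properties} gives that $\Sigma\F$ is split monoadditive.

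The main obstacle I expect lies in the second step: keeping careful track of the shuffle isomorphism $\sigma$ and verifying that $\F(\sigma)$ really intertwines the two comparison maps in a way compatible with the chosen retractions, all while preserving naturality in the bifunctor sense. By contrast, the splitting of the sequence in the first step is essentially formal, once one notes that restricting a bifunctorial retraction to the diagonal remains natural.
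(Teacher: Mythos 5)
Your proof is correct and follows essentially the same route as the paper: exhibit $\Sigma\F$ as a retract of $\F\circ{\sf sq}$ via the (diagonal restriction of the) bifunctorial retraction, show $\F\circ{\sf sq}$ is split monoadditive using $r_{c_1\sqcup c_1,\,c_2\sqcup c_2}$, and conclude with part (2) of Lemma~\ref{Lemma_split_mono_properties}. The only difference is cosmetic: you spell out the shuffle isomorphism $(c_1\sqcup c_1)\sqcup(c_2\sqcup c_2)\cong(c_1\sqcup c_2)\sqcup(c_1\sqcup c_2)$, which the paper leaves implicit when it asserts that $r_{c_1\sqcup c_1,\,c_2\sqcup c_2}$ retracts the comparison map of $\F\circ{\sf sq}$.
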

\begin{proof} Let  $r_{c_1,c_2}:\F(c_1\sqcup c_2) \to \F(c_1)\sqcup \F(c_2) $ be a retraction of the natural transformation $(\F(i_1),\F(i_2)).$ Then $r_{c_1\sqcup c_1,c_2\sqcup c_2}$ is a retraction for the natural transfotmation
$((\F\circ {\sf sq})(i_1),(\F\circ {\sf sq})(i_2)).$ It follows
that $\F\circ {\sf sq}$ is a split monoadditive functor. The short
exact sequence ${\sf sq}\circ \F \mono \F\circ {\sf sq} \epi
\Sigma\F$ splits in the category of representations. It follows
that $\Sigma\F$ is a retract of $\F\circ{\sf sq}.$ Then Lemma
\ref{Lemma_split_mono_properties} implies that $\Sigma\F$ is a
split monoadditive representation.
\end{proof}
\begin{Corollary}
A split monoadditive representation is $\infty$-monoadditive.
\end{Corollary}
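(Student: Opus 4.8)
The plan is a short induction on $n,$ using the immediately preceding Proposition as the inductive engine. First I would record the base case. A split monomorphism is in particular a monomorphism, so any split monoadditive representation is monoadditive; that is, it is $1$-monoadditive in the sense of the definition above. This also guarantees that for a split monoadditive $\F$ the cokernel $\Sigma\F$ is defined in the first place.

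For the inductive step I would assume that every split monoadditive representation is $n$-monoadditive, and let $\F$ be an arbitrary split monoadditive representation. The preceding Proposition tells us that $\Sigma\F$ is again split monoadditive, hence in particular monoadditive, so each iterated cokernel $\Sigma^m\F$ stays defined along the induction. Applying the inductive hypothesis to the split monoadditive representation $\Sigma\F,$ we get that $\Sigma\F$ is $n$-monoadditive. By the very definition of $(n+1)$-monoadditivity this says precisely that $\F$ is $(n+1)$-monoadditive. Since $n$ was arbitrary, $\F$ is $n$-monoadditive for every $n\geq 1,$ which is exactly the assertion that $\F$ is $\infty$-monoadditive.

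There is essentially no obstacle here, since all the real content has already been extracted in the Proposition, whose point is exactly that the operator $\Sigma$ carries the class of split monoadditive representations into itself. Beyond that, the only thing one uses is the trivial implication \emph{split monomorphism} $\Rightarrow$ \emph{monomorphism}, which simultaneously supplies the base case and ensures that the tower $\F,\Sigma\F,\Sigma^2\F,\dots$ remains monoadditive at every stage so that the inductive definition of $\infty$-monoadditivity can be unwound.
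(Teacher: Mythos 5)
Your proof is correct and follows exactly the paper's intended route: the paper leaves this Corollary without an explicit proof precisely because it is the immediate induction you spelled out, combining the preceding Proposition (that $\Sigma$ preserves split monoadditivity) with the trivial fact that split monomorphisms are monomorphisms. Nothing is missing.
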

Hence, we have the following gradation:
$$\text{ split monoadditive }\Rightarrow\ \infty\text{-monoadditive } \Rightarrow \text{ monoadditive.}$$

Let $\mathcal O:\C\to {\sf Alg}(k)$ be a functor to  the category of $k$-algebras. We say that a representation $\mathcal F$ is a right (resp. left) {\bf $\boldsymbol{\O}$-module}  if $\F(c)$ endowed with a structure of right (resp. left) $\O(c)$-module for any $c\in \C$ such that $\F(f):\F(c)\to \F(c')$ is a $\O(c)$-module homomorphism, where the structure of $\O(c)$-module on $\F(c')$ is given by the homomorphism $\O(f):\O(c)\to \O(c').$ An $\O$-module $\mathcal M$ is said to be {\bf flat} if $\mathcal M(c)$ is flat over $\mathcal O(c)$ for any $c\in \C.$

\begin{Proposition}\label{Proposition_infty_mono_tensor}
Let $\O:\C\to {\sf Alg}(k)$ be an additive functor, ${\mathcal M}:\C\to {\sf Mod}(k)$ be a right $\O$-module such that $\mathcal M(c)$ is
 flat $\O(c)$-module for any $c\in \C$ and ${\mathcal N}:\C\to {\sf Mod}(k)$ be a left $\O$-module  which is $n$-monoadditive as a representation for $1\leq n\leq \infty$. Then $\mathcal M\otimes_\O \mathcal N$ is an $n$-monoadditve representation.
\end{Proposition}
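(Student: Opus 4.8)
The plan is to argue by induction on $n$, using flatness of $\mathcal{M}$ to make tensoring exact and additivity of $\O$ to control the algebra attached to a coproduct. Write $G=\mathcal{M}\otimes_\O\mathcal{N}$, and for $c_1,c_2\in\C$ put $A_j=\O(c_j)$ and $B=\O(c_1\sqcup c_2)$; additivity of $\O$ gives an isomorphism of $k$-algebras $B\cong A_1\sqcup A_2$ with structure maps $\iota_j=\O(i_j):A_j\to B$. The base case $n=1$ is the assertion that $G$ is monoadditive, i.e. that $(G(i_1),G(i_2)):G(c_1)\oplus G(c_2)\to G(c_1\sqcup c_2)$ is a monomorphism; the inductive step is to recognise $\Sigma G$ as again of the admissible shape — a flat module tensored over an additive algebra functor with an $(n-1)$-monoadditive module — and then to invoke the inductive hypothesis. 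The case $n=\infty$ follows by running the finite case for every $n$. The functor $\mathcal{M}\otimes_\O(-)$ is exact because each $\mathcal{M}(c)$ is flat over $\O(c)$ and exactness of representations is objectwise.

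For the base case I would factor each component. Since $\mathcal{M}(i_j):\mathcal{M}(c_j)\to\mathcal{M}(c_1\sqcup c_2)$ is $\iota_j$-linear, the map $G(i_j)$ factors as
$$\mathcal{M}(c_j)\otimes_{A_j}\mathcal{N}(c_j)\xrightarrow{\ {\sf id}\otimes\mathcal{N}(i_j)\ }\mathcal{M}(c_j)\otimes_{A_j}\mathcal{N}(c_1\sqcup c_2)\xrightarrow{\ \mu_j\otimes{\sf id}\ }\mathcal{M}(c_1\sqcup c_2)\otimes_B\mathcal{N}(c_1\sqcup c_2),$$
where $\mathcal{N}(c_1\sqcup c_2)$ is restricted to $A_j$ along $\iota_j$ and $\mu_j$ is induced by the module structure map $\mathcal{M}(i_j)$. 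Monoadditivity of $\mathcal{N}$ forces each $\mathcal{N}(i_j)$ to be injective (restrict $(\mathcal{N}(i_1),\mathcal{N}(i_2))$ to a single summand), and $\mathcal{M}(c_j)$ is flat over $A_j$; hence the first arrow is a monomorphism. It then remains to check that the assembled map $\bigoplus_j\mathcal{M}(c_j)\otimes_{A_j}\mathcal{N}(c_1\sqcup c_2)\to\mathcal{M}(c_1\sqcup c_2)\otimes_B\mathcal{N}(c_1\sqcup c_2)$ is injective on the relevant image, for which I would use the Mayer--Vietoris short exact sequence of $B$-bimodules attached to the coproduct $B\cong A_1\sqcup A_2$ (amalgamated over $k$), tensored by the $B$-flat module $\mathcal{M}(c_1\sqcup c_2)$, flatness guaranteeing that left exactness survives.

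For the inductive step I would feed the defining short exact sequence of representations $0\to\mathcal{N}\oplus\mathcal{N}\xrightarrow{{\sf T}_{\mathcal{N}}}\mathcal{N}\circ{\sf sq}\to\Sigma\mathcal{N}\to0$ through the exact functor $\mathcal{M}\otimes_\O(-)$, working at the level of the bifunctor on $\C\times\C$ and of the iterated coproduct objects rather than naively. The aim is a natural identification exhibiting $\Sigma G$ as a flat module tensored over the additive functor $\O\circ{\sf sq}$ with a representation that is $(n-1)$-monoadditive precisely because $\Sigma\mathcal{N}$ is; granting it, the inductive hypothesis gives that $\Sigma G$ is $(n-1)$-monoadditive, whence $G$ is $n$-monoadditive.

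The hard part, where I expect the real work to concentrate, is twofold, and both difficulties stem from the coproduct of algebras. First, at a coproduct object one must make the target $B$-module structure explicit enough to produce the Mayer--Vietoris sequence and to see that tensoring by the flat modules $\mathcal{M}(c_j)$ and $\mathcal{M}(c_1\sqcup c_2)$ preserves the exactness used in the base case. Second, and more delicately, the naive candidate $\mathcal{M}\otimes_\O\Sigma\mathcal{N}$ for $\Sigma G$ does not even make sense, because $\Sigma\mathcal{N}$ carries no $\O$-module structure: the image of ${\sf T}_{\mathcal{N}}$ mixes the two structure maps $\iota_1,\iota_2$ and is a submodule for neither. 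I would therefore never form a tensor product with $\Sigma\mathcal{N}$ directly, and instead obtain the inductive identification purely by applying $\mathcal{M}\otimes_\O(-)$ to the monomorphism ${\sf T}_{\mathcal{N}}$ and passing to cokernels, so that only flatness of $\mathcal{M}$ — and never a nonexistent module structure on $\Sigma\mathcal{N}$ — is used.
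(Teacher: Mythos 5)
Your plan shares the paper's architecture (induction on $n$, additivity of $\O$ to identify $B=\O(c_1\sqcup c_2)$ with $A_1\sqcup A_2$ where $A_j=\O(c_j)$, flatness to drive exactness), and the factorization of each $G(i_j)$ through $\mathcal M(c_j)\otimes_{A_j}\mathcal N(c_1\sqcup c_2)$ is correct as far as it goes; but both halves of the proposal stop exactly where the content of the proposition lies. In the base case, what must be proved is injectivity of the assembled map ${\sf T}_G$ on the whole direct sum, which is strictly stronger than injectivity of its two components (for $\mathcal N=\O$ itself both components are injective, yet the sum map kills the antidiagonal copy of $k$). Granting your Mayer--Vietoris sequence $0\to B\otimes_k B\to (B\otimes_{A_1}B)\oplus(B\otimes_{A_2}B)\to B\to 0$ and tensoring it with the $B$-flat module $\mathcal M(c_1\sqcup c_2)$ and with $\mathcal N(c_1\sqcup c_2)$, what you obtain is an identification of the kernel of $\bigoplus_j \mathcal M(c_1\sqcup c_2)\otimes_{A_j}\mathcal N(c_1\sqcup c_2)\to \mathcal M(c_1\sqcup c_2)\otimes_B\mathcal N(c_1\sqcup c_2)$ as the (large) antidiagonal image of $\mathcal M(c_1\sqcup c_2)\otimes_k\mathcal N(c_1\sqcup c_2)$; this restates the problem rather than solving it. You would still have to show that the image of $\bigoplus_j\mathcal M(c_j)\otimes_{A_j}\mathcal N(c_j)$ meets that kernel trivially, and that $\mathcal M(c_j)\otimes_{A_j}\mathcal N(c_1\sqcup c_2)\to\mathcal M(c_1\sqcup c_2)\otimes_{A_j}\mathcal N(c_1\sqcup c_2)$ is injective; neither follows from objectwise flatness, which says nothing about the comparison maps $\mathcal M(i_j)$. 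Some bridge relating $\mathcal M(c_j)$ to $\mathcal M(c_1\sqcup c_2)$ is indispensable --- in the paper it is precisely the displayed isomorphism $\mathcal M(c_1\sqcup c_2)\otimes_B(\mathcal N(c_1)\oplus\mathcal N(c_2))\cong\bigl(\mathcal M(c_1)\otimes_{A_1}\mathcal N(c_1)\bigr)\oplus\bigl(\mathcal M(c_2)\otimes_{A_2}\mathcal N(c_2)\bigr)$ --- and your proposal contains no counterpart of it.

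The inductive step is where the proposal breaks down structurally. The inductive hypothesis is the proposition itself in degree $n-1$: to apply it to $\Sigma G$ you must exhibit $\Sigma G$ as an objectwise flat right module tensored, over an additive algebra functor, with an $(n-1)$-monoadditive left module. That is exactly what the paper's proof supplies: $\O\circ{\sf sq}$ is additive, $\mathcal M\circ{\sf sq}$ is objectwise flat, $\Sigma\mathcal N$ is equipped with an $\O\circ{\sf sq}$-module structure, and $\Sigma(\mathcal M\otimes_\O\mathcal N)(c)=\mathcal M(c\sqcup c)\otimes_{\O(c\sqcup c)}\Sigma\mathcal N(c)$. (Your objection that $\Sigma\mathcal N$ carries no $\O$-module structure is aimed at a claim the paper does not make; the structure it asserts, and the induction needs, is over $\O\circ{\sf sq}$.) You explicitly refuse to endow $\Sigma\mathcal N$ with any module structure, and your substitute --- ``apply $\mathcal M\otimes_\O(-)$ to ${\sf T}_{\mathcal N}$ and pass to cokernels'' --- does not typecheck: ${\sf T}_{\mathcal N}$ is not a morphism of $\O$-modules, since its source is an $\O$-module, its target is an $\O\circ{\sf sq}$-module, and it is not semilinear along either of the two structure maps $\O(i_1),\O(i_2)$; hence $\mathcal M\otimes_\O{\sf T}_{\mathcal N}$ is undefined. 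The only map of this shape that does exist is ${\sf T}_G$ itself, whose cokernel is $\Sigma G$ by definition --- a tautology yielding no identification. With no module structure on $\Sigma\mathcal N$ and no presentation of $\Sigma G$ in the required tensor form, the inductive hypothesis has nothing to apply to, so the induction does not close; the missing idea is the very one you discard.
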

\begin{proof}
Since $\O$ is additive $\mathcal O(c_1\sqcup c_2)\cong \mathcal O(c_1)\sqcup \mathcal O(c_2).$ Then the structure of $\mathcal O(c_i)$-module on $\mathcal N(c_i)$ induces a structure of  $\O(c_1\sqcup c_2)$-module on $\mathcal N(c_1)\oplus \mathcal N(c_2)$ and the morphism $\mathcal N(c_1)\oplus \mathcal N(c_2) \to \mathcal N(c_1\sqcup c_2)$ is a $\mathcal O(c_1\sqcup c_2)$-module homomorphism. Hence $\Sigma\mathcal N$ is a $\O\circ {\sf sq}$-module.  Since $\mathcal M(c_1\sqcup c_2)$ is a flat $\mathcal O(c_1\sqcup c_2)$-module, the morphism
\begin{equation}\label{eq_Mmmmmm}
\mathcal M(c_1\sqcup c_2)\otimes_{\O(c_1\sqcup c_2)}(\mathcal N(c_1)\oplus \mathcal N(c_2)) \to \mathcal M(c_1\sqcup c_2)\otimes_{\O(c_1\sqcup c_2) } \mathcal N(c_1\sqcup c_2)
\end{equation}
is a monomorphism. Moreover, we have the isomorphism
$$\mathcal M(c_1\sqcup c_2)\otimes_{\O(c_1\sqcup c_2)}(\mathcal N(c_1)\oplus \mathcal N(c_2))\cong \Big(\mathcal M(c_1)\otimes_{\O(c_1)}\mathcal N(c_1)\Big)\oplus \Big(\mathcal M(c_2)\otimes_{\mathcal O(c_2)}\mathcal N(c_2)\Big).$$
It follows that $\mathcal M\otimes_\O \mathcal N$ is a monoadditive representation and
$$\Sigma(\mathcal M\otimes_\O \mathcal N)(c)=\mathcal M(c\sqcup c)\otimes_{\mathcal O(c\sqcup c)} \Sigma\mathcal N(c).$$ Then by induction we get the required statement.
\end{proof}

\subsection{Spectral sequence of higher limits.}
\begin{Proposition} Let $\Phi:\mathcal A\to \mathcal B$ be a left exact functor between abelian categories with enough injectives and $a^\bullet\in {\sf Com}^+(\mathcal A)$ be a bounded below complex with $\Phi$-acyclic cohomologies.
Then there exists a cohomological spectral sequence $E$ of objects of $\mathcal B$ so that
\begin{equation} E\Rightarrow \Phi(H^n(a^\bullet)) \ \ \text{ and } \  \ E_1^{pq}={\bf R}^q \Phi(a^p).
\end{equation}
\end{Proposition}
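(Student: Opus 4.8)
The statement is the first hypercohomology spectral sequence of the complex $a^\bullet$ relative to $\Phi$, and its one non-standard feature --- that the abutment is $\Phi(H^n(a^\bullet))$ rather than the hyper-derived functor ${\bf R}^n\Phi(a^\bullet)$ --- is exactly what the $\Phi$-acyclicity hypothesis buys us. The plan is therefore to produce \emph{both} hypercohomology spectral sequences attached to $a^\bullet$ and $\Phi$, use the acyclicity of the cohomologies to collapse the second one so as to compute the abutment, and then read that abutment off in the first.

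Concretely, since $\mathcal A$ has enough injectives and $a^\bullet$ is bounded below, I would first choose a Cartan--Eilenberg resolution $a^\bullet \to I^{\bullet,\bullet}$: a bounded-below double complex of injective objects of $\mathcal A$ in which each column $I^{p,\bullet}$ is an injective resolution of $a^p$, and in which the horizontal cohomology objects $H^p_{\mathrm{hor}}(I^{\bullet,q})$ assemble, over $q$, into an injective resolution of $H^p(a^\bullet)$. Applying $\Phi$ term-by-term yields a bounded-below double complex $\Phi(I^{\bullet,\bullet})$ in $\mathcal B$ with total complex $T^\bullet = {\sf Tot}(\Phi(I^{\bullet,\bullet}))$. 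Because this double complex lies in a quadrant after a shift, both of its spectral sequences converge to $H^n(T^\bullet)$.

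Next I would identify the two $E$-pages. Filtering by columns (vertical differential first), the $E_1$-term is the vertical cohomology $H^q(\Phi(I^{p,\bullet}))$; since $I^{p,\bullet}$ is an injective resolution of $a^p$, this is precisely ${\bf R}^q\Phi(a^p)$, so this spectral sequence has $E_1^{pq} = {\bf R}^q\Phi(a^p)$ and converges to $H^{p+q}(T^\bullet)$ --- this is the asserted $E$. Filtering the other way (horizontal differential first), the crucial observation is that the horizontal short exact sequences of a Cartan--Eilenberg resolution are sequences of injectives, hence split, so $\Phi$ commutes with horizontal cohomology; after the second differential one obtains the $E_2$-term ${\bf R}^q\Phi(H^p(a^\bullet))$, again abutting to $H^{p+q}(T^\bullet)$.

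Finally I would invoke the hypothesis: since every $H^p(a^\bullet)$ is $\Phi$-acyclic, ${\bf R}^q\Phi(H^p(a^\bullet)) = 0$ for $q>0$ and equals $\Phi(H^p(a^\bullet))$ for $q=0$. Hence the second spectral sequence is concentrated in the single row $q=0$, degenerates there, and yields $H^n(T^\bullet) \cong \Phi(H^n(a^\bullet))$. Substituting this computation of the abutment into the first spectral sequence gives exactly $E_1^{pq} = {\bf R}^q\Phi(a^p) \Rightarrow \Phi(H^{p+q}(a^\bullet))$, as claimed. The only genuine work is the construction and structural properties of the Cartan--Eilenberg resolution together with the convergence bookkeeping; I expect the splitting of the horizontal sequences of injectives --- which is what lets $\Phi$ pass through horizontal cohomology in the second spectral sequence --- to be the step most worth stating carefully, since the remainder is the standard machinery of the two spectral sequences of a double complex.
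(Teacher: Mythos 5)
Your proposal is correct and follows essentially the same route as the paper: a Cartan--Eilenberg resolution $I^{\bullet\bullet}$ of $a^\bullet$, the two spectral sequences of the bicomplex $\Phi(I^{\bullet\bullet})$, identification of the column-filtration $E_1$-page with ${\bf R}^q\Phi(a^p)$, and collapse of the other spectral sequence via $\Phi$-acyclicity to identify the common abutment as $\Phi(H^n(a^\bullet))$. Your justification for pushing $\Phi$ through horizontal cohomology (splitting of the short exact sequences of injectives) is the same fact the paper invokes by saying $I^{\bullet,q}$ is homotopy equivalent to its cohomology with zero differentials.
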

\begin{proof}
Consider an injective Cartan-Eilenberg resolution $I^{\bullet \bullet}$ of $a^\bullet$ with an injection
$a^\bullet \mono I^{\bullet,0}$
 and differentials $d_{\rm I}^{pq}:I^{pq}\to I^{p+1,q}$
 and $d_{\rm II}^{pq}:I^{pq}\to I^{p,q+1}.$
Then there are two spectral sequences ${}_{\rm I}E$ and $_{\rm
II}E$ associated with the bicomplex
$B^{\bullet\bullet}=\Phi(I^{\bullet\bullet})$ which converge to
the cohomology of the totalisation $H^n({\sf
Tot}(B^{\bullet\bullet}))$ so that ${}_{\rm I}E_1^{pq}=H^q_{\rm
II}(B^{p\bullet}),$ ${}_{\rm I}E_2^{pq}=H^p_{\rm I}(H^q_{\rm
II}(B^{\bullet\bullet}))$ and  ${}_{\rm II}E_1^{pq}=H^p_{\rm
I}(B^{\bullet, q}),$ ${}_{\rm II}E_2^{pq}=H^q_{\rm II}(H^p_{\rm
I}(B^{\bullet\bullet})).$ Since $I^{\bullet\bullet}$ is a
Cartan-Eilenberg resolution, the complex $I^{\bullet,q}$ is
homotopy equivalent to the complex of its cohomologies $H_{\rm
I}^*(I^{\bullet,q})$ with zero differentials, and hence $H_{\rm
I}^p(B^{\bullet,q })=\Phi(H^p_{\rm I}(I^{\bullet,q})).$  The
complex $H^p_{\rm I}(I^{\bullet\bullet})$ is an injective
resolution of $H^p(a^\bullet).$ It follows that $H^q_{\rm
II}(H^p_{\rm I}(B^{\bullet\bullet}))={\bf
R}^q\Phi(H^p(a^\bullet)).$ Using the fact that the objects
$H^p(a^\bullet)$ are $\Phi$-acyclic, we obtain
\begin{equation}{}_{\rm II}E_2^{pq}=\left\{\begin{array}{l l}
0, & q\ne 0 \\
\Phi(H^p(a^\bullet)), & q=0.
\end{array}\right.\end{equation} Hence we get $H^n({\sf Tot}(B^{\bullet\bullet}))=\Phi(H^n(a^\bullet)).$ Therefore the spectral sequence $E:={}_{\rm I}E$ converges to $\Phi(H^n(a^\bullet)),$ and since $I^{p,\bullet}$ is an injective resolution of $a^p$, we have $E_1^{pq}=H_{\rm II}^q(\Phi(I^{p,\bullet}))={\bf R}^q\Phi (a^\bullet).$
\end{proof}
\begin{Corollary}\label{corollary_spectral} Let
$\mathcal F^\bullet$ be a bounded below complex of representations
of a category $\mathcal C$ with $\lim$-acyclic cohomologies.
Then there exists a cohomological  spectral sequence $E$ such that
\begin{equation}E \Rightarrow \lim\: H^n(\mathcal F^\bullet) \ \ \text{ and } \ \ E_1^{pq}=\lim^q \mathcal F^p.
\end{equation}
\end{Corollary}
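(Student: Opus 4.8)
The plan is to obtain this corollary as an immediate specialization of the preceding Proposition. I would apply that Proposition to the left exact functor $\Phi=\lim:{\sf Mod}(k)^\C\to {\sf Mod}(k)$ and to the complex $a^\bullet=\F^\bullet$, and then merely translate the conclusion into the notation for higher limits.

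First I would check that all the abstract hypotheses of the Proposition are satisfied in this setting. The source $\mathcal A={\sf Mod}(k)^\C$ is an abelian category with enough injectives, as recalled in Section \ref{section_Representations_of_categories}, and the target $\mathcal B={\sf Mod}(k)$ is abelian with enough injectives as well. The functor $\lim$ is left exact, since it is right adjoint to the diagonal functor ${\sf diag}\dashv \lim$. Finally, $\F^\bullet$ is bounded below by hypothesis, and its cohomology objects $H^p(\F^\bullet)$ are $\lim$-acyclic, which is exactly the requirement that they be $\Phi$-acyclic for the chosen $\Phi=\lim$.

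Having verified these, the Proposition yields a cohomological spectral sequence $E$ of $k$-modules with $E\Rightarrow \Phi(H^n(\F^\bullet))$ and $E_1^{pq}={\bf R}^q\Phi(\F^p)$. It then remains only to rewrite both identifications using the earlier notation: by definition $\lim^q={\bf R}^q\lim$, whence $E_1^{pq}={\bf R}^q\lim(\F^p)=\lim^q\F^p$, and the abutment is $\Phi(H^n(\F^\bullet))=\lim\,H^n(\F^\bullet)$. This is precisely the claimed spectral sequence.

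Since the statement is a direct instantiation of the general hyperderived-functor Proposition, I do not expect any genuine obstacle. The only point deserving care is confirming that the abstract conditions (enough injectives, left exactness, and $\Phi$-acyclicity of the cohomology objects) transfer correctly to the category of representations; but each of these has already been established in the text, so the verification is routine.
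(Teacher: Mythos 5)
Your proposal is correct and is exactly the argument the paper intends: the Corollary is stated without proof precisely because it is the direct specialization of the preceding Proposition to $\Phi=\lim:{\sf Mod}(k)^{\mathcal C}\to{\sf Mod}(k)$, using that ${\sf Mod}(k)^{\mathcal C}$ has enough injectives, that $\lim$ is left exact as a right adjoint of ${\sf diag}$, and that $\lim^q={\bf R}^q\lim$ by definition. All the hypotheses you verify are indeed established earlier in the text, so nothing further is needed.
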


\section{Limits over ${\sf Pres}(G)$}\label{section_higher_limits_pres_G}

\subsection{The category of presentations of a group}

In this section, we assume that $k=\Z$ and we consider the category ${\sf Pres}(G)$
of free presentations of a group $G$. The objects of ${\sf Pres}(G)$ are surjective homomorphisms $\pi:F\epi G$ where $F$ is
a free group and  morphisms $f:(\pi_1:F_1\epi G)\to (\pi_2:F_2\epi
G)$ are homomorphisms $f:F_1\to F_2$ such that $\pi_1=\pi_2f$. The
category ${\sf Pres}(G)$ has coproducts given by
$$(\pi_1:F_1\epi G)\sqcup(\pi_2:F_2\epi G)=((\pi_1,\pi_2):F_1*F_2\epi
G),$$ and hence it is contractible. In particular, for any abelian
group $A$ higher limits $\lim^i A$ vanish for $i>0.$ It is easy to
check that the category ${\sf Pres}(G)$ is strongly connected, and
hence, the limit of a functor $\mathcal F:{\sf Pres}(G)\to {\sf
Ab}$ coincides with the set of invariant elements of $\mathcal
F(\pi)$ for any presentation $\pi$. All the limits considered
bellow in this section are taken over the category ${\sf
Pres}(G)$.

 We will always denote by $R$ the kernel of an epimorphism $\pi:F\epi G.$ Therefore an object of ${\sf Pres}(G)$ defines a short exact sequence of groups
$$1\longrightarrow R \longrightarrow F \overset{\pi}{\longrightarrow} G \longrightarrow 1,$$
and $R$ can be considered as a functor $R:{\sf Pres}(G)\to {\sf Gr}.$

\subsection{Representations $\mathcal M\otimes_{\Z [F]} \f \otimes_{\Z [F]} \mathcal N$}

Consider the fucntor $\Z [F]:{\sf Pres}(G)\to {\sf Rings}$ that
sends $(F\epi G)$ to $\Z [F].$ Then the representation
$$\f=I(F):{\sf Pres}(G)\longrightarrow {\sf Ab}$$
has the structure of $\Z [F]$-bimodule. So if $\mathcal M:{\sf
Pres}(G)\to {\sf Ab}$ is a right $\Z [F]$-module and $\mathcal
N:{\sf Pres}(G)\to {\sf Ab}$ is a left $\Z [F]$-module, we can
define a tensor product:
$$\mathcal M\otimes_{\Z [F]} \f \otimes_{\Z [F]} \mathcal N:{\sf Pres}(G)\longrightarrow {\sf Ab}.$$

\begin{Proposition}\label{Proposition_I(F)tesor} Let $\mathcal M$ be a right $\Z [F]$-module and $\mathcal N$ be a left $\mathcal O$-module. Then the representations $\mathcal M\otimes_{\Z [F]}\f $ and $\f\otimes_{\Z [F]} \mathcal N$ are split monoadditive. Moreover, if one of $\mathcal M,\mathcal N$ is a flat $\Z [F]$-module, then the representation $\mathcal M\otimes_{\Z [F]} \f \otimes_{\Z [F]} \mathcal N$ is $\infty$-monoadditive, and hence,  for any $i\geq 0$
$$\lim^i\: \mathcal M\otimes_{\Z [F]} \f \otimes_{\Z [F]} \mathcal N =0.$$
\end{Proposition}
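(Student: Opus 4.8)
The plan is to reduce everything to the behaviour of the augmentation ideal $\f=I(F)$ on free products and then feed the result into Proposition \ref{Proposition_infty_mono_tensor}. The starting observation is that $\f$ extends to a functor on \emph{all} free groups, not only on morphisms over $G$, so the group-theoretic projections $q_k\colon F_1*F_2\epi F_k$ (sending $F_{3-k}$ to $1$) induce maps $I(q_k)\colon I(F_1*F_2)\to I(F_k)$. Since $q_k i_k=\mathrm{id}$ and $q_k i_{3-k}=1$, the pair $(I(q_1),I(q_2))$ is a natural retraction of $(I(i_1),I(i_2))$, so $\f$ itself is split monoadditive; concretely this records the classical decomposition $I(F_1*F_2)\cong\bigl(\Z[F_1*F_2]\otimes_{\Z[F_1]}I(F_1)\bigr)\oplus\bigl(\Z[F_1*F_2]\otimes_{\Z[F_2]}I(F_2)\bigr)$ of left $\Z[F]$-modules.

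Next I would transport this splitting across the tensor products. Consider $\f\otimes_{\Z[F]}\mathcal N$ at a coproduct $\pi=\pi_1*\pi_2$, whose value is $I(F)\otimes_{\Z[F]}\mathcal N(\pi)$. Since $I(q_k)$ is a right $\Z[F]$-module map (with $I(F_k)$ regarded as a $\Z[F]$-module through $q_k$), the map $I(q_k)\otimes\mathrm{id}$ carries it into $I(F_k)\otimes_{\Z[F_k]}\bigl(\Z[F_k]\otimes_{\Z[F]}\mathcal N(\pi)\bigr)$, and the analogous map works for $\mathcal M\otimes_{\Z[F]}\f$. To recognise $\pi_k$ as a genuine summand I would use that, $F_k$ being free and $\pi_k$ surjective, the homomorphism $\pi_{3-k}$ lifts through $\pi_k$ to a map $F_{3-k}\to F_k$ over $G$; this yields a section $s_k\colon\pi_1*\pi_2\to\pi_k$ in ${\sf Pres}(G)$ with $s_k i_k=\mathrm{id}$, exhibiting $\pi_k$ as a retract of the coproduct and showing at once that $(\F(i_1),\F(i_2))$ is pointwise split injective, i.e. monoadditivity.

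The hard part will be the naturality of the retraction. The projections $q_k$ are natural in $(c_1,c_2)$ but move the base ring, so the naive target of $I(q_k)\otimes\mathrm{id}$ is the base change $\Z[F_k]\otimes_{\Z[F]}\mathcal N(\pi)$ rather than $\mathcal N(\pi_k)$ itself; the sections $s_k$ repair the module but depend on a non-canonical lift and so are not natural. I expect the resolution to come from the fact that the free-product basis of $I(F_1*F_2)$ splits naturally according to whether a generator lies in $F_1$ or in $F_2$ (morphisms of presentations respect this partition once one passes to Fox derivatives), which upgrades the pointwise splitting to a natural one; checking that this naturally splits the comparison map $\mathcal N(\pi_k)\to\Z[F_k]\otimes_{\Z[F]}\mathcal N(\pi)$ is the real content and the step I would spend the most care on. This establishes that $\mathcal M\otimes_{\Z[F]}\f$ and $\f\otimes_{\Z[F]}\mathcal N$ are split monoadditive, hence $\infty$-monoadditive by the corollary that split monoadditive representations are $\infty$-monoadditive.

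Finally I would assemble the $\infty$-monoadditivity of the triple product. The functor $\O=\Z[F]$ is additive because $\Z[-]$ is a left adjoint, whence $\Z[F_1*F_2]=\Z[F_1]\sqcup\Z[F_2]$ in ${\sf Alg}(\Z)$. If $\mathcal M$ is flat I write $\mathcal M\otimes_{\Z[F]}\f\otimes_{\Z[F]}\mathcal N=\mathcal M\otimes_{\Z[F]}\bigl(\f\otimes_{\Z[F]}\mathcal N\bigr)$ and apply Proposition \ref{Proposition_infty_mono_tensor} to the flat right module $\mathcal M$ and the $\infty$-monoadditive left module $\f\otimes_{\Z[F]}\mathcal N$; if instead $\mathcal N$ is flat I regroup as $\bigl(\mathcal M\otimes_{\Z[F]}\f\bigr)\otimes_{\Z[F]}\mathcal N$ and use the symmetric form of that proposition with the $\infty$-monoadditive right module $\mathcal M\otimes_{\Z[F]}\f$. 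Either way the triple product is $\infty$-monoadditive, so $\lim^i$ of it vanishes for all $i\ge 0$ by Corollary \ref{Corollary_infty_monoadditive}.
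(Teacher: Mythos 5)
Your opening and closing steps are sound and agree with the paper: the retraction $(I(q_1),I(q_2))$ is a natural splitting showing that $\f$ itself is split monoadditive (this is just a reformulation of the free-product decomposition of $I(F_1*F_2)$ that the paper quotes from Weibel), and the final regrouping of the triple tensor product through Proposition \ref{Proposition_infty_mono_tensor} and Corollary \ref{Corollary_infty_monoadditive} is exactly the paper's last sentence. The genuine gap is the step you yourself call ``the real content,'' and it cannot be closed along the route you propose. Applying $I(q_k)\otimes{\sf id}$ base-changes $\mathcal N(\pi_1\sqcup\pi_2)$ along $q_k$, i.e.\ kills the ideal generated by the normal closure of $F_{3-k}$ in $F_1*F_2$; since $\pi_1\sqcup\pi_2$ maps that normal closure onto all of $G$, this destroys essentially the whole module structure. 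Concretely, take $\mathcal N=\Z[G]$ with $F$ acting through $\pi$ (a legitimate left $\Z[F]$-module functor, with $\mathcal N(f)={\sf id}$ for every morphism $f$). Then $\Z[F_k]\otimes_{\Z[F]}\mathcal N(\pi_1\sqcup\pi_2)\cong\Z$, and your comparison map $\mathcal N(\pi_k)\to\Z[F_k]\otimes_{\Z[F]}\mathcal N(\pi_1\sqcup\pi_2)$ is the augmentation $\Z[G]\to\Z$, which for non-trivial $G$ is not even injective, let alone split. So the statement you reduce to is false, and no refinement of the Fox-derivative bookkeeping can produce the retraction you are after. (A smaller slip: retractions $s_k$ of the separate inclusions $i_k$ do not ``at once'' give monoadditivity; for a constant representation each $\F(i_k)$ is split monic while the sum map is the fold map, which is not injective.)

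What the paper does instead avoids base change altogether. It decomposes $\f$ at a coproduct as a \emph{right} $\Z[F_1*F_2]$-module, $I(F_1*F_2)\cong\bigl(I(F_1)\otimes_{\Z[F_1]}\Z[F_1*F_2]\bigr)\oplus\bigl(I(F_2)\otimes_{\Z[F_2]}\Z[F_1*F_2]\bigr)$, so that tensoring with $\mathcal N(\pi_1\sqcup\pi_2)$ over $\Z[F_1*F_2]$ yields the summands $I(F_k)\otimes_{\Z[F_k]}\mathcal N(\pi_1\sqcup\pi_2)$, in which $\mathcal N(\pi_1\sqcup\pi_2)$ is only \emph{restricted} along $i_k$, never pushed forward. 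Under this identification the map \eqref{eq_Fi1_Fi2} for $\F=\f\otimes_{\Z[F]}\mathcal N$ becomes the direct sum of the maps ${\sf id}_{I(F_k)}\otimes\mathcal N(i_k)$, and each of these is split by ${\sf id}_{I(F_k)}\otimes\mathcal N(r_k)$, where $r_k\colon\pi_1\sqcup\pi_2\to\pi_k$ is the retraction of $i_k$ in ${\sf Pres}(G)$ provided by strong connectedness (for example $r_1=({\sf id}_{\pi_1},f)$ for any morphism $f\colon\pi_2\to\pi_1$). The identity $r_ki_k={\sf id}$ is what makes $\mathcal N(r_k)$ a $\Z[F_k]$-linear map for the restricted structures, so this tensor is well defined --- precisely the point where your $s_k$ failed against the base-changed target. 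Your worry that the $r_k$ are non-canonical, hence that the resulting splitting is not visibly natural in $(\pi_1,\pi_2)$, is a fair criticism of the paper's verification of splitness ``in the category of bifunctors''; but the repair would be to address naturality for the paper's restriction-based splitting, not to switch to the projections $q_k$, which provably cannot work.
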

\begin{proof}
First we note that since the category ${\sf Pres}(G)$ is strongly
connected, then for any two presentations $\pi_1:F_1\epi G$ and
$\pi_2:F_2\epi G$ the embeddings $\pi_k \to  \pi_1\sqcup \pi_2$
split. Indeed, for any morphism $f:\pi_2\to \pi_1$ the morphism
$({\sf id}_{\pi_1},f):\pi_1\sqcup \pi_2\to \pi_1$ is a retraction
of $i_1:\pi_1\to \pi_1\sqcup \pi_2.$ It follows that for any
representation $\F$ the monomorphism $\F(\pi_k)\to\F(\pi_1\sqcup
\pi_2)$ splits.

It is well known \cite[Prop. 6.2.9]{Weibel} and easy to prove that
for any groups $G$ and $H$ there is an isomorphism of right
$\Z[G*H]$-modules $$I(G*H)\cong(I(G)\otimes_{\Z [G]}\Z[G*H])\oplus
(I(H)\otimes_{\Z [G]}\Z[G*H])$$ and the isomorphism of left
$\Z[G*H]$-modules $$I(G*H)\cong(\Z[G*H]\otimes_{\Z [G]}
I(G))\oplus(\Z[G*H]\otimes_{\Z [H]} I(H)).$$ It follows that
$$I(F_1*F_1)\otimes_{\Z[F_1*F_2]}\mathcal
N(\pi_1\sqcup\pi_2)=(I(F_1)\otimes_{\Z [F_1]} \mathcal
N(\pi_1\sqcup\pi_2))\oplus (I(F_2)\otimes_{\Z [F_2]} \mathcal
N(\pi_1\sqcup\pi_2)).$$ Since for any $k\in \{1,2\}$ the
monomorphism $\mathcal N(\pi_k)\to \mathcal N(\pi_1\sqcup\pi_2)$
splits, the monomirphism
$$I(F_k)\otimes_{\Z [F_k]}\mathcal N(\pi_k) \longrightarrow I(F_k)\otimes_{\Z [F_k]}\mathcal N(\pi_1\sqcup \pi_2)$$
splits too. It follows that $\f\otimes_{\Z [F]}\mathcal N$ is
split monoadditive. Similarly we get that $\mathcal M\otimes_{\Z
[F]}\f$ is split monoadditive. Then using the Proposition
\ref{Proposition_infty_mono_tensor} and Corollary
\ref{Corollary_infty_monoadditive} we get the required statement.
\end{proof}

\begin{Corollary}\label{Lemma_F_ab}
$\lim^i\: \f\otimes \F=0 $ and  $\lim^i\: F_{ab}\otimes \F=0$  for any functor $\F:{\sf Pres}\to {\sf Ab}$ and any $i\geq 0.$
\end{Corollary}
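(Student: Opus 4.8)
The plan is to present both $\f\otimes\F$ and $F_{ab}\otimes\F$ (with $\otimes=\otimes_\Z$, taken pointwise over ${\sf Pres}(G)$) as instances of the two-sided tensor $\mathcal M\otimes_{\Z[F]}\f\otimes_{\Z[F]}\mathcal N$ of Proposition \ref{Proposition_I(F)tesor}, always choosing $\mathcal M=\Z[F]$. Since $\Z[F]$ is a free, hence flat, right $\Z[F]$-module, that proposition then delivers the vanishing $\lim^i=0$ directly, with no hypothesis needed on the left factor $\mathcal N$.

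For the first isomorphism I would form the left $\Z[F]$-module $\mathcal N:=\Z[F]\otimes_\Z\F$, where $F$ acts by left multiplication on the first tensor factor and a morphism $f\colon\pi_1\to\pi_2$ acts by $\Z[f]\otimes\F(f)$. The standard change-of-rings identity $\f\otimes_{\Z[F]}(\Z[F]\otimes_\Z\F)\cong\f\otimes_\Z\F$ then gives
$$\f\otimes\F\ \cong\ \Z[F]\otimes_{\Z[F]}\f\otimes_{\Z[F]}\bigl(\Z[F]\otimes_\Z\F\bigr),$$
so with $\mathcal M=\Z[F]$ flat, Proposition \ref{Proposition_I(F)tesor} yields $\lim^i(\f\otimes\F)=0$ for all $i\ge 0$.

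For the second isomorphism I would use the classical natural isomorphism $\f\otimes_{\Z[F]}\Z\cong I(F)/I(F)^2\cong F_{ab}$, where $\Z$ carries the trivial left $\Z[F]$-module structure (the action factoring through the augmentation). Setting $\mathcal N:=\F$ with the trivial $\Z[F]$-action, i.e. $\mathcal N=\Z\otimes_\Z\F$, and tensoring with $\F$ over $\Z$, one gets
$$F_{ab}\otimes\F\ \cong\ \f\otimes_{\Z[F]}\mathcal N\ \cong\ \Z[F]\otimes_{\Z[F]}\f\otimes_{\Z[F]}\mathcal N,$$
again with $\mathcal M=\Z[F]$ flat, so Proposition \ref{Proposition_I(F)tesor} gives $\lim^i(F_{ab}\otimes\F)=0$.

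The only point that requires care is that all these identifications are natural in the presentation $\pi$, i.e. they are isomorphisms of representations over ${\sf Pres}(G)$ and not merely pointwise isomorphisms of abelian groups. Concretely, one must check that the left $\Z[F]$-module structures on $\Z[F]\otimes_\Z\F$ and on the trivial module $\F$ are compatible with the functoriality in $\pi$, and that the isomorphism $I(F)/I(F)^2\cong F_{ab}$ is natural for group homomorphisms over $G$. Granting this naturality, the flatness of the free module $\Z[F]$ reduces everything to a single application of Proposition \ref{Proposition_I(F)tesor}.
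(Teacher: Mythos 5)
Your proposal is correct and follows essentially the same route as the paper: the paper's one-line proof rests on exactly your two identities, $\f\otimes \F=\f\otimes_{\Z [F]}(\Z [F]\otimes \F)$ and $F_{ab}\otimes \F=\f\otimes_{\Z [F]}\F$ (the latter with the trivial $\Z[F]$-action on $\F$), combined with Proposition \ref{Proposition_I(F)tesor}. Your explicit padding with the flat factor $\mathcal M=\Z[F]$ is only a cosmetic variant of invoking that proposition's split-monoadditivity statement for $\f\otimes_{\Z[F]}\mathcal N$ directly.
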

\begin{proof}
It follows from the equalities $\f\otimes \F=\f\otimes_{\Z [F]}(\Z
[F]\otimes \F)$ and $F_{ab}\otimes \F=\f\otimes_{\Z [F]}\F$.
\end{proof}

\subsection{Technical lemmas}

\begin{Lemma}\label{L1} If ${\bf a}, {\bf b}$ two right ideals of a ring $\Lambda$ which are free as right $\Lambda$-modules with bases $\{x_i\}$ and $\{y_j\},$ then ${\bf ab}$ is a free $\Lambda$-module with basis $\{x_iy_j\}.$
\end{Lemma}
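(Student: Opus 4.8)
The plan is to verify the three standard requirements directly: that the proposed set $\{x_iy_j\}$ lies in $\mathbf{ab}$, that it generates $\mathbf{ab}$ as a right $\Lambda$-module, and that it is right $\Lambda$-linearly independent. The first requirement is immediate: each $x_i\in\mathbf{a}$ and $y_j\in\mathbf{b}$, so $x_iy_j\in\mathbf{ab}$, and since $\mathbf{ab}$ is a right ideal the whole right submodule $\sum_{i,j}x_iy_j\Lambda$ is contained in $\mathbf{ab}$.

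For linear independence I would argue as follows. Suppose $\sum_{i,j}x_iy_j\lambda_{ij}=0$ with almost all $\lambda_{ij}=0$. Regrouping gives $\sum_i x_i\bigl(\sum_j y_j\lambda_{ij}\bigr)=0$, and each coefficient $c_i:=\sum_j y_j\lambda_{ij}$ is an element of $\mathbf{b}\subseteq\Lambda$. Because $\{x_i\}$ is a basis of $\mathbf{a}$ as a right $\Lambda$-module, the relation $\sum_i x_ic_i=0$ forces $c_i=0$ for every $i$; then the freeness of $\{y_j\}$ forces $\lambda_{ij}=0$. Note that only the right-module freeness of the two ideals is used here.

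The essential step, and the one I expect to be the real obstacle, is showing that $\{x_iy_j\}$ spans $\mathbf{ab}$. It suffices to treat a single product $ab$ with $a\in\mathbf{a}$ and $b\in\mathbf{b}$, since a general element of $\mathbf{ab}$ is a finite sum of such products and $\sum_{i,j}x_iy_j\Lambda$ is closed under addition. Expanding $a$ in its basis, $a=\sum_i x_i\lambda_i$, gives $ab=\sum_i x_i(\lambda_ib)$. The point is that $\lambda_ib$ must again lie in $\mathbf{b}$, so that it can be expanded as $\lambda_ib=\sum_j y_j\mu_{ij}$ and hence $ab=\sum_{i,j}x_iy_j\mu_{ij}\in\sum_{i,j}x_iy_j\Lambda$. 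This rewriting uses exactly that $\mathbf{b}$ is closed under left multiplication by $\Lambda$; for a genuinely one-sided ideal it fails (e.g. in a free associative algebra $s\Lambda\cdot t\Lambda$ contains $s^2t$, whereas $s^2t\notin st\Lambda$), so the feature that really makes the lemma work is that $\mathbf{b}$ is two-sided, which holds for the ideals $\mathbf{f}$ and $\mathbf{r}$ and all of their products to which the lemma is applied. Once spanning and independence are established, $\mathbf{ab}=\bigoplus_{i,j}x_iy_j\Lambda$ is free on the claimed basis.

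I could also package the argument more conceptually: multiplication induces a map $\mathbf{a}\otimes_\Lambda\mathbf{b}\to\mathbf{ab}$, which is surjective by definition of the product and injective by the independence computation above; and since $\mathbf{a}\cong\Lambda^{(I)}$ is free, the source is $\bigoplus_i\mathbf{b}$, free on $\{x_i\otimes y_j\}$. This repackages the same content, but it again relies on $\mathbf{b}$ carrying a left $\Lambda$-action so that $\otimes_\Lambda$ is defined, confirming that the two-sided property of $\mathbf{b}$ is precisely where the mathematical weight sits. For iterated products such as $\mathbf{rfr}$ one then applies the statement repeatedly, using that the product of two-sided ideals is again two-sided.
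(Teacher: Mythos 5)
Your proof is correct, and in substance it is the direct verification that the paper compresses into the single word ``Obvious'': membership and right-linear independence are formal, and spanning is the only step carrying weight. So there is no competing argument in the paper to compare against. What your write-up genuinely adds is the observation that the lemma \emph{as literally stated} --- for right ideals --- is false, and your counterexample is valid: in $\Lambda=\mathbb{Z}\langle s,t\rangle$ the right ideals $s\Lambda$ and $t\Lambda$ are free with bases $\{s\}$ and $\{t\}$, yet $s^2t=(s\cdot s)(t\cdot 1)$ lies in $s\Lambda\cdot t\Lambda$ while $s^2t\notin st\Lambda$, since every nonzero element of $st\Lambda$ has all of its monomials beginning with $st$. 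You isolate the needed hypothesis precisely: ${\bf b}$ must be closed under left multiplication by $\Lambda$ (e.g.\ two-sided), so that each $\lambda_i b$ can be re-expanded in the basis $\{y_j\}$; independence, by contrast, needs nothing beyond right freeness, exactly as you say.

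This correction is harmless for the paper itself: in the corollary immediately following the lemma, and everywhere afterwards, the lemma is applied to ${\bf f}=I(F)\,$ and to ideals ${\bf r}_i=(R_i-1)\mathbb{Z}[F]$ with $R_i$ \emph{normal} in $F$ (normality is what makes $(R_i-1)\mathbb{Z}[F]=\mathbb{Z}[F](R_i-1)$ two-sided), together with products of such, which are again two-sided. So your argument proves the statement in the generality actually used, while showing that the hypothesis ``right ideals'' should be read as ``two-sided ideals'' (or at least that ${\bf b}$ is also a left ideal). Your tensor-product repackaging via ${\bf a}\otimes_\Lambda{\bf b}\to{\bf ab}$ is a clean way to organize the same content and, as you note, makes the dependence on the left $\Lambda$-action on ${\bf b}$ visible in the very formation of $\otimes_\Lambda$; it is also the form of the statement that reappears later in the paper (Lemma \ref{L5}), where the same multiplication map is shown to be an isomorphism under a Tor-vanishing hypothesis.
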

\begin{proof}
Obvious.
\end{proof}

\begin{Corollary}\label{C1} Let $R_1,\dots, R_n$ be normal subgroups of $F,$ $\{y_{i,j}\}$ be a free basis of the group $R_i$ and $\r_i=(R_i-1)\Z [F].$ Then $\r_1\r_2\dots \r_n$ is a free right (or left) $F$-module and products $(y_{1,j_1}-1)\dots (y_{n,j_n}-1)$ form its basis.
\end{Corollary}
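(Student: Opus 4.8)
The plan is to reduce the corollary to Lemma \ref{L1} by induction on $n$, after first settling the single-factor case $n=1$, which carries the real content: that each $\r_i=(R_i-1)\Z[F]$ is a free right (and left) $\Z[F]$-module with basis $\{y_{i,j}-1\}$.

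For the single-factor case I would first record the classical fact that for a free group $R$ with free basis $\{y_j\}$ the augmentation ideal $I(R)=(R-1)\Z[R]$ is a free right $\Z[R]$-module with basis $\{y_j-1\}$. This is standard and may be quoted from \cite{Weibel}; it reflects that a wedge of circles is a $K(R,1)$ and so yields the length-one free resolution $0\to \bigoplus_j \Z[R]\to \Z[R]\to \Z\to 0$ sending the $j$th generator to $y_j-1$. Since every $R_i$ is a subgroup of the free group $F$, it is itself free by Nielsen--Schreier, so this applies with $R=R_i$, and $\{y_{i,j}\}$ is indeed a free basis of $R_i$.

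Next I would upgrade the coefficients from $\Z[R_i]$ to $\Z[F]$. Choosing a transversal of $R_i$ in $F$ exhibits $\Z[F]$ as a free, hence flat, left $\Z[R_i]$-module. Applying flatness to the inclusion $I(R_i)\hookrightarrow \Z[R_i]$ shows that the multiplication map
$$
I(R_i)\otimes_{\Z[R_i]}\Z[F]\longrightarrow \Z[F]
$$
is injective with image exactly $\r_i=(R_i-1)\Z[F]$. Since $I(R_i)$ is $\Z[R_i]$-free on $\{y_{i,j}-1\}$, the left-hand side is $\Z[F]$-free on the elements $(y_{i,j}-1)\otimes 1$, and these map to $y_{i,j}-1$; hence $\r_i$ is a free right $\Z[F]$-module with basis $\{y_{i,j}-1\}$. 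The left-module statement follows by the symmetric argument (or by transporting along the anti-automorphism $g\mapsto g^{-1}$ of $\Z[F]$).

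Finally I would run the induction. Assuming $\r_2\cdots\r_n$ is free with basis the products $(y_{2,j_2}-1)\cdots(y_{n,j_n}-1)$, I apply Lemma \ref{L1} to the two free right ideals $\r_1$ and $\r_2\cdots\r_n$ to conclude that $\r_1(\r_2\cdots\r_n)$ is free with basis the products $(y_{1,j_1}-1)(y_{2,j_2}-1)\cdots(y_{n,j_n}-1)$, as claimed. The main obstacle is precisely the single-factor case: one must genuinely combine the freeness of $I(R_i)$ over $\Z[R_i]$ with the flatness of $\Z[F]$ over $\Z[R_i]$. Once each $\r_i$ is known to be free on $\{y_{i,j}-1\}$, the passage to the general product is a purely formal iteration of Lemma \ref{L1}.
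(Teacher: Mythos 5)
Your proof is correct and is essentially the argument the paper intends: Corollary \ref{C1} is stated there without proof as an immediate consequence of Lemma \ref{L1}, the missing input being exactly the single-factor fact you establish — that each $\r_i$ is $\Z[F]$-free on $\{y_{i,j}-1\}$, obtained from freeness of $I(R_i)$ over $\Z[R_i]$ (via Nielsen--Schreier) together with freeness/flatness of $\Z[F]$ over $\Z[R_i]$. Your write-up simply makes explicit this classical base case and the inductive iteration of Lemma \ref{L1} that the paper leaves implicit.
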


\begin{Lemma} If $R\mono F\epi G$ is a presentation of $G$ then
$$\dots \to  \frac{\f\r^2}{\f\r^3} \to  \frac{\r^2}{\r^3} \to  \frac{\f\r}{\f\r^2}  \to  \frac{\r}{\r^2} \to \frac{\f}{\f\r} \to  \frac{\Z [F]}{\r} \to  \Z \to 0$$
is a free resolution of the right trivial $G$-module $\Z,$ whose $2n$-th term is $\r^n/\r^{n+1}$, $(2n+1)$-st term is $\f\r^n/\f\r^{n+1}$ and morphisms are induced by embeddings.
\end{Lemma}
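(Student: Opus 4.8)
The plan is to realize the sequence as the complex attached to the single descending filtration of $\Z[F]$ by the two-sided ideals
$$\Z[F]\supseteq \f \supseteq \r \supseteq \f\r \supseteq \r^2 \supseteq \f\r^2 \supseteq \dots,$$
that is, setting $J_0=\Z[F]$, $J_{2n}=\r^n$ and $J_{2n+1}=\f\r^n$, the $i$-th term of the proposed resolution is $J_i/J_{i+2}$, the $i$-th differential (for $i\geq 1$) is induced by the inclusion $J_i\hookrightarrow J_{i-1}$, and the augmentation is the projection $\Z[F]/\r=\Z[G]\epi\Z$. First I would record the inclusions $J_{i+1}\subseteq J_i$ that make this a filtration: $\r\subseteq\f$ since $r-1\in\f$ for every $r\in R$; the ideal $\r$ is two-sided because $R$ is normal (so $g(r-1)=(grg^{-1}-1)g\in\r$); and therefore $\f\r^n\subseteq\r^n$ and $\r^{n+1}=\r\,\r^n\subseteq\f\r^n$. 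These are precisely the inclusions used to define the differentials, and since $J_{i+1}\subseteq J_i$ the composite of two consecutive differentials is immediately zero, so the sequence is a complex.

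Exactness is then formal and comes from splicing the short exact sequences $0\to J_{i+1}/J_{i+2}\to J_i/J_{i+2}\to J_i/J_{i+1}\to 0$. Each differential factors as the projection $J_i/J_{i+2}\epi J_i/J_{i+1}$ followed by the inclusion $J_i/J_{i+1}\mono J_{i-1}/J_{i+1}$; hence its image is $J_i/J_{i+1}$, while the kernel of the next differential is also $J_i/J_{i+1}$, so image equals kernel at every spot. At the right-hand end, the image of $\f/\f\r\to \Z[F]/\r$ is $\f/\r=I(G)$, which is exactly the kernel of the augmentation $\Z[G]\epi\Z$, and surjectivity onto $\Z$ is clear.

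The substance of the statement is that every term is a free right $G$-module, and this is where the technical lemmas enter. Right multiplication by $\r$ carries $\r^n$ into $\r^{n+1}$ and $\f\r^n$ into $\f\r^{n+1}$, so each quotient $J_i/J_{i+2}$ is annihilated by $\r$ and thus is a right $\Z[G]=\Z[F]/\r$-module. To get freeness I would identify these quotients with base-changed free modules: since $\r^n\otimes_{\Z[F]}\Z[G]=\r^n/\r^n\r=\r^n/\r^{n+1}$ and likewise $\f\r^n/\f\r^{n+1}=\f\r^n\otimes_{\Z[F]}\Z[G]$, it suffices to know that $\r^n$ and $\f\r^n$ are free right $\Z[F]$-modules, because base change along $\Z[F]\epi\Z[G]$ preserves freeness. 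Corollary \ref{C1} gives that $\r^n$ is free with basis the products $(y_{j_1}-1)\cdots(y_{j_n}-1)$ for a free basis $\{y_j\}$ of $R$; combining this with the classical fact that $\f=I(F)$ is free with basis $\{x_i-1\}$ for a free basis $\{x_i\}$ of $F$, Lemma \ref{L1} yields that $\f\r^n=\f\cdot\r^n$ is free with basis $\{(x_i-1)(y_{j_1}-1)\cdots(y_{j_n}-1)\}$. Hence every $J_i/J_{i+2}$ is a free $G$-module, and assembling the three steps produces the claimed free resolution.

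Since exactness is purely formal, the only real work is the freeness step, and the main point to be careful about there is the module bookkeeping: one must check that right multiplication by $\r$ genuinely kills each quotient, so that the terms descend to $\Z[G]$-modules, and that the product $\r^n\r=\r^{n+1}$ (respectively $\f\r^n\r=\f\r^{n+1}$) is compatible with the chosen $\Z[F]$-bases, so that the identification $J_i/J_{i+2}\cong J_i\otimes_{\Z[F]}\Z[G]$ really transports freeness from $\Z[F]$ down to $\Z[G]$. Once that identification is in place, freeness over $\Z[G]$ is automatic.
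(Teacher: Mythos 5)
Your proof is correct and takes essentially the same route as the paper: freeness of $\r^n$ and $\f\r^n$ as right $\Z[F]$-modules via Corollary \ref{C1} (your derivation of the $\f\r^n$ case from Lemma \ref{L1} and freeness of $\f$ is the same thing, since $\f=(F-1)\Z[F]$ with $F$ normal in itself), then base change along $\Z[F]\epi \Z G$ to get free $G$-modules, with exactness being formal. The only difference is that the paper dismisses exactness as obvious, whereas you spell it out by splicing the short exact sequences $0\to J_{i+1}/J_{i+2}\to J_i/J_{i+2}\to J_i/J_{i+1}\to 0$ of the filtration $J_{2n}=\r^n$, $J_{2n+1}=\f\r^n$ — a harmless elaboration of the same argument.
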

\begin{proof}
Using Corollary \ref{C1} we get that $\f\r^n$ and $\r^n$ are free
$F$-modules. Hence $\f\r^n/\f\r^{n+1}=\f\r^n\otimes_{\Z [F]}\Z G$
and $\r^n/\r^{n+1}=\r^n\otimes_{\Z [F]}\Z G$ are free $G$-modules.
Exactness is obvious.
\end{proof}

\begin{Lemma} \label{L3}
The map $r_1\otimes {\dots} \otimes r_n\mapsto (r_1-1)\dots (r_n-1)$ induces an isomorphism
$$R_{ab}^{\otimes n}\cong \r^n/\r^n\f.$$
\end{Lemma}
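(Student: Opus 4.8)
The plan is to identify both sides as free abelian groups with matching bases and then to verify that the stated assignment is a well-defined bijection between these bases. Fix a free basis $\{y_j\}$ of the free group $R$. By Corollary~\ref{C1}, the power $\r^n$ is a free right $\Z[F]$-module with basis the products $(y_{j_1}-1)\cdots(y_{j_n}-1)$. Since $\f=I(F)$ is the kernel of the augmentation and $\Z[F]/\f\cong\Z$, for any free right $\Z[F]$-module $M$ one has $M/M\f\cong M\otimes_{\Z[F]}\Z$, which is free abelian on the images of a basis of $M$. Applying this with $M=\r^n$ shows that $\r^n/\r^n\f$ is free abelian with basis the classes $\overline{(y_{j_1}-1)\cdots(y_{j_n}-1)}$, indexed by $n$-tuples $(j_1,\dots,j_n)$. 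On the other side, $R_{ab}$ is free abelian on $\{\bar y_j\}$, so $R_{ab}^{\otimes n}$ is free abelian on the tensors $\bar y_{j_1}\otimes\cdots\otimes\bar y_{j_n}$, indexed by the same set of tuples.

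The heart of the argument is to show that the assignment $r_1\otimes\cdots\otimes r_n\mapsto \overline{(r_1-1)\cdots(r_n-1)}$ is well defined on $R_{ab}^{\otimes n}$. I would first treat the map $\tilde\Theta:R^{\times n}\to \r^n/\r^n\f$ sending $(r_1,\dots,r_n)$ to $\overline{(r_1-1)\cdots(r_n-1)}$ and check that it is multiplicative in each argument separately. For the $k$-th argument this rests on the identity $(r_kr_k'-1)=(r_k-1)+(r_k'-1)+(r_k-1)(r_k'-1)$: substituting it, the first two summands give the two corresponding values of $\tilde\Theta$, while the cross term produces a product of $n+1$ factors each lying in $\r$, hence an element of $\r^{n+1}$. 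The key observation is the inclusion $\r\subseteq\f$ (since $R-1\subseteq F-1$), which gives $\r^{n+1}=\r^n\cdot\r\subseteq\r^n\f$; thus the cross term vanishes modulo $\r^n\f$, and $\tilde\Theta$ is additive in each variable. As the target is abelian, each resulting homomorphism $R\to \r^n/\r^n\f$ automatically kills commutators, so $\tilde\Theta$ factors through $R_{ab}$ in every argument and defines a $\Z$-multilinear map $R_{ab}^{\times n}\to \r^n/\r^n\f$; the universal property of the tensor product then yields the desired $\Theta:R_{ab}^{\otimes n}\to \r^n/\r^n\f$.

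Finally, $\Theta$ carries the basis vector $\bar y_{j_1}\otimes\cdots\otimes\bar y_{j_n}$ to the basis class $\overline{(y_{j_1}-1)\cdots(y_{j_n}-1)}$, so it is a bijection between bases and therefore an isomorphism; since every construction used is functorial in the presentation $(F\epi G)$, the isomorphism is automatically natural. I expect the only real obstacle to be the bookkeeping in the multilinearity step, where one must confirm that every cross term, regardless of which argument is being expanded, indeed lands in $\r^{n+1}$ and hence in $\r^n\f$. Once the inclusion $\r^{n+1}\subseteq\r^n\f$ is in hand this is routine, and the matching of free bases then forces $\Theta$ to be an isomorphism with no further computation.
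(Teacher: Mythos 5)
Your proof is correct and follows essentially the same route as the paper: Corollary~\ref{C1} identifies $\r^n/\r^n\f=\r^n\otimes_{\Z[F]}\Z$ as free abelian on the classes of $(y_{j_1}-1)\cdots(y_{j_n}-1)$, these match the tensor basis of $R_{ab}^{\otimes n}$, and the map is an isomorphism because it is a bijection on bases. The only difference is that you additionally verify well-definedness of the map via the identity $(rr'-1)=(r-1)+(r'-1)+(r-1)(r'-1)$ and the inclusion $\r^{n+1}\subseteq\r^n\f$, a step the paper's proof leaves implicit; this is a welcome bit of extra care, not a deviation in method.
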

\begin{proof} Let $\{y_i\}$ is a free basis of $R.$
Corollary \ref{C1} implies that $\r^n$ is a free $F$-module and
products $(y_{i_1}-1)\dots (y_{i_n}-1)$ form its basis. It follows
that $\r^n/\r^{n}\f=\r^n\otimes_{\Z [F]}\Z$ is a free abelian
group and products $(y_{i_1}-1)\dots (y_{i_n}-1)$ form its basis.
$R_{ab}^{\otimes n}$ is a free abelian group with the basis given
by tensors $y_{i_1}\otimes {\dots} \otimes y_{i_n}.$ Hence the map
$r_1\otimes {\dots} \otimes r_n\mapsto (r_1-1)\dots (r_n-1)$
induces a bijection on the bases of free abelian groups, and
hence, it is an isomorphism.
\end{proof}

\begin{Lemma}\label{L4} If $\Lambda$ is a ring, $\a$ is its  right ideal and $\b$ is its left ideal, then there is a short exact sequence
$$0\longrightarrow {\sf Tor}_2^{\Lambda}(\Lambda/\a,\Lambda/\b)\longrightarrow \a\otimes_\Lambda \b \longrightarrow \a\b\longrightarrow 0,$$
where the map $\a\otimes_\Lambda\b\to \a\b$ is given by multiplication.
\end{Lemma}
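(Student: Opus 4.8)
The plan is to produce the short exact sequence directly from the long exact sequence computing $\mathsf{Tor}^\Lambda_*(\Lambda/\a,\Lambda/\b)$, which is the standard homological device for relating a tensor product of ideals to their set-theoretic product. First I would write down the two short exact sequences of modules
\begin{equation*}
0\longrightarrow \a \longrightarrow \Lambda \longrightarrow \Lambda/\a\longrightarrow 0,\qquad
0\longrightarrow \b \longrightarrow \Lambda \longrightarrow \Lambda/\b\longrightarrow 0,
\end{equation*}
regarding $\a$ as a right $\Lambda$-module and $\b$ as a left $\Lambda$-module. Tensoring the first sequence over $\Lambda$ with $\Lambda/\b$ (or equivalently applying the long exact $\mathsf{Tor}$-sequence to either argument) yields the fragment
\begin{equation*}
\mathsf{Tor}^\Lambda_1(\Lambda,\Lambda/\b)\longrightarrow \mathsf{Tor}^\Lambda_1(\Lambda/\a,\Lambda/\b)\longrightarrow \a\otimes_\Lambda(\Lambda/\b)\longrightarrow \Lambda\otimes_\Lambda(\Lambda/\b).
\end{equation*}
Since $\Lambda$ is free (hence flat), the leftmost term vanishes, and $\mathsf{Tor}^\Lambda_1(\Lambda/\a,\Lambda/\b)\cong \mathsf{Tor}^\Lambda_2(\Lambda/\a,\Lambda/\b)$ by dimension-shifting along the sequence $0\to\a\to\Lambda\to\Lambda/\a\to 0$, because $\mathsf{Tor}^\Lambda_i(\Lambda,-)=0$ for $i>0$ gives $\mathsf{Tor}^\Lambda_{i}(\Lambda/\a,-)\cong\mathsf{Tor}^\Lambda_{i-1}(\a,-)$ for $i\geq 2$ and a connecting isomorphism in degree one.

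The key computation is to identify the middle and right terms of the resulting four-term sequence with $\a\otimes_\Lambda \b$ and $\a\b$ respectively. I would use the standard isomorphism $\b\cong\ker(\Lambda\to\Lambda/\b)$ together with the snake/comparison argument: tensoring $0\to\b\to\Lambda\to\Lambda/\b\to 0$ with the right module $\a$ gives
\begin{equation*}
\mathsf{Tor}^\Lambda_1(\a,\Lambda/\b)\longrightarrow \a\otimes_\Lambda\b\longrightarrow \a\otimes_\Lambda\Lambda\longrightarrow \a\otimes_\Lambda(\Lambda/\b)\longrightarrow 0,
\end{equation*}
and the image of $\a\otimes_\Lambda\b\to\a\otimes_\Lambda\Lambda=\a$ is exactly the subgroup generated by products $xy$ with $x\in\a$, $y\in\b$, i.e.\ the product ideal $\a\b$. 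Thus the multiplication map $\a\otimes_\Lambda\b\to\a\b$ is surjective, its kernel is the image of $\mathsf{Tor}^\Lambda_1(\a,\Lambda/\b)$, and assembling these identifications with the degree-shift above produces the asserted sequence
\begin{equation*}
0\longrightarrow \mathsf{Tor}^\Lambda_2(\Lambda/\a,\Lambda/\b)\longrightarrow \a\otimes_\Lambda\b\longrightarrow \a\b\longrightarrow 0.
\end{equation*}

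The main obstacle is bookkeeping rather than depth: one must verify that the two dimension-shifting isomorphisms and the connecting maps are compatible, so that the copy of $\mathsf{Tor}^\Lambda_1(\a,\Lambda/\b)$ appearing as the kernel of multiplication genuinely coincides, as a subobject of $\a\otimes_\Lambda\b$, with the image of $\mathsf{Tor}^\Lambda_2(\Lambda/\a,\Lambda/\b)$ under the chain of isomorphisms. I expect this to follow from naturality of the long exact $\mathsf{Tor}$-sequence applied to the morphism of short exact sequences relating $\a\hookrightarrow\Lambda$ and $\b\hookrightarrow\Lambda$, but it is the one place where care is needed to make the identification of the multiplication map with the natural map $\a\otimes_\Lambda\b\to\a\b$ precise. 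Everything else reduces to the flatness of $\Lambda$ over itself and the elementary observation that the image of $\a\otimes_\Lambda\b$ in $\Lambda$ is the product ideal.
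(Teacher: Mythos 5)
Your second paragraph is, in substance, the correct proof and it is essentially the paper's own argument with the roles of $\a$ and $\b$ interchanged: tensoring $0\to\b\to\Lambda\to\Lambda/\b\to 0$ with $\a$ gives, since ${\sf Tor}_1^\Lambda(\a,\Lambda)=0$, an exact sequence $0\to {\sf Tor}_1^\Lambda(\a,\Lambda/\b)\to \a\otimes_\Lambda\b\to \a$, the image of the last map is $\a\b$, and the shift ${\sf Tor}_2^\Lambda(\Lambda/\a,\Lambda/\b)\cong{\sf Tor}_1^\Lambda(\a,\Lambda/\b)$ --- the $i=2$ case of the correct dimension shift ${\sf Tor}_i^\Lambda(\Lambda/\a,-)\cong{\sf Tor}_{i-1}^\Lambda(\a,-)$, $i\geq 2$, that you quote --- finishes the proof. (The paper instead tensors $0\to\a\to\Lambda\to\Lambda/\a\to 0$ with $\b$, getting $0\to{\sf Tor}_1^\Lambda(\Lambda/\a,\b)\to\a\otimes_\Lambda\b\to\a\b\to 0$, and shifts in the second variable; the two routes are mirror images.) Also, the compatibility worry in your last paragraph is unnecessary: the kernel of the multiplication map is literally the subgroup ${\sf Tor}_1^\Lambda(\a,\Lambda/\b)$ of $\a\otimes_\Lambda\b$, and one applies the single shift isomorphism to it; no comparison of competing identifications is required.

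However, your first paragraph contains a genuine error and must be discarded, not merely tidied up. The claimed isomorphism ${\sf Tor}_1^\Lambda(\Lambda/\a,\Lambda/\b)\cong{\sf Tor}_2^\Lambda(\Lambda/\a,\Lambda/\b)$ is false: dimension shifting along $0\to\a\to\Lambda\to\Lambda/\a\to 0$ trades the argument $\Lambda/\a$ for $\a$ while lowering the degree; it never relates two consecutive ${\sf Tor}$ groups of the same pair of modules. Concretely, for $\Lambda=\Z$ and $\a=\b=2\Z$ one has ${\sf Tor}_1^{\Z}(\Z/2,\Z/2)\cong\Z/2$ while ${\sf Tor}_2^{\Z}(\Z/2,\Z/2)=0$ because $\Z$ has global dimension one; this is consistent with the lemma, since here $\a\otimes_\Z\b\cong\Z$ maps isomorphically onto $\a\b=4\Z$. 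The surrounding plan fails for the same reason: the sequence obtained by tensoring with $\Lambda/\b$ has middle term $\a\otimes_\Lambda(\Lambda/\b)\cong\a/\a\b$, which cannot be identified with $\a\otimes_\Lambda\b$, and what that sequence actually computes is ${\sf Tor}_1^\Lambda(\Lambda/\a,\Lambda/\b)\cong(\a\cap\b)/\a\b$ --- a genuinely different group from the kernel of $\a\otimes_\Lambda\b\to\a\b$. So delete the detour through ${\sf Tor}_1^\Lambda(\Lambda/\a,\Lambda/\b)$; what remains of your proposal is a correct proof matching the paper's.
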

\begin{proof}
Tensoring by $\b$ the short exact sequence $\a\mono \Lambda \epi \Lambda/\a$ we get the exact sequence
$$0 \longrightarrow {\sf Tor}_1^\Lambda(\Lambda/\a,\b) \longrightarrow \a\otimes_\Lambda\b\longrightarrow  \b \longrightarrow \b/\a\b \longrightarrow 0.$$
Since the kernel of $\b\to \b/\a\b$ is $\a\b$ we get the short exact sequence
$$0 \longrightarrow {\sf Tor}_1^\Lambda(\Lambda/\a,\b) \longrightarrow \a\otimes_\Lambda\b\longrightarrow \a\b \longrightarrow 0.$$ Using the long exact sequence of ${\sf Tor}_*^\Lambda(\Lambda/\a,-)$ with respect to the short exact sequence $\b\mono \Lambda\epi \Lambda/\b$ we obtain ${\sf Tor}_1^\Lambda(\Lambda/\a,\b)\cong {\sf Tor}_2^\Lambda(\Lambda/\a,\Lambda/\b).$
\end{proof}

\begin{Lemma}\label{L5} Let ${\bf a}$ and ${\bf b}$ be ideals of $\Z [F]$ such that ${\sf Tor}(\Z [F]/{\bf a},\Z [F]/{\bf b})=0.$ Then the map $x\otimes y\mapsto xy$ induces
$$\a \otimes_{\Z [F]} \b\cong \a\b.$$
\end{Lemma}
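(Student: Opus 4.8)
The plan is to obtain this as an immediate specialization of Lemma~\ref{L4}. Taking $\Lambda=\Z[F]$ and observing that a two-sided ideal is in particular both a right ideal and a left ideal, Lemma~\ref{L4} supplies the short exact sequence
$$0\longrightarrow {\sf Tor}_2^{\Z[F]}(\Z[F]/\a,\Z[F]/\b)\longrightarrow \a\otimes_{\Z[F]}\b\longrightarrow \a\b\longrightarrow 0,$$
in which the right-hand arrow is exactly the multiplication map $x\otimes y\mapsto xy$ named in the statement. So the only thing left to verify is that the hypothesis forces the leftmost term to vanish, after which exactness delivers the claimed isomorphism.

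The key step is therefore to match the ${\sf Tor}$-group assumed to vanish with the one occurring as the kernel of multiplication. First I would recall, from the proof of Lemma~\ref{L4}, that the kernel of $\a\otimes_{\Z[F]}\b\to\a\b$ is ${\sf Tor}_1^{\Z[F]}(\Z[F]/\a,\b)$; then, applying the long exact sequence of ${\sf Tor}_*^{\Z[F]}(\Z[F]/\a,-)$ to $\b\mono\Z[F]\epi\Z[F]/\b$ and using the flatness of $\Z[F]$, one gets the natural isomorphism ${\sf Tor}_1^{\Z[F]}(\Z[F]/\a,\b)\cong {\sf Tor}_2^{\Z[F]}(\Z[F]/\a,\Z[F]/\b)$. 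This is precisely the group appearing in the hypothesis, so its vanishing makes the multiplication map both injective and surjective.

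I expect no genuine obstacle: all of the homological content already resides in Lemma~\ref{L4}, and what remains is pure bookkeeping. The two points worth flagging are the indexing of ${\sf Tor}$ — the obstruction to $\a\otimes_{\Z[F]}\b\cong\a\b$ lives in the \emph{second} Tor of the two cyclic quotients, equivalently the first Tor of $\Z[F]/\a$ against $\b$ — and the mild discrepancy between the two-sided ideals in the hypothesis and the one-sided ideals required by Lemma~\ref{L4}; the latter is harmless, since a two-sided ideal satisfies both one-sided conditions simultaneously.
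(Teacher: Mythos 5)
There is a genuine gap, and it sits exactly where the lemma has its actual content. In this paper the unadorned symbol ${\sf Tor}$ denotes the torsion product of abelian groups, i.e.\ ${\sf Tor}_1^{\Z}$, so the hypothesis of Lemma \ref{L5} is that ${\sf Tor}_1^{\Z}(\Z[F]/\a,\Z[F]/\b)=0$ --- a condition over the ring $\Z$, not over $\Z[F]$. This is how the lemma is used later: Lemma \ref{L7} feeds it the hypothesis that $\Z[F]/{\bf x}$ is torsion free \emph{as an abelian group}, and the proof of Lemma \ref{L6_5} uses the same convention. Your argument, however, identifies the hypothesis with the vanishing of ${\sf Tor}_2^{\Z[F]}(\Z[F]/\a,\Z[F]/\b)$, a Tor over a different ring and in a different degree. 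The sentence ``This is precisely the group appearing in the hypothesis'' is where the proof breaks: no justification is given for equating the integral Tor with the group-ring ${\sf Tor}_2$, and under your reading Lemma \ref{L5} would be an empty reformulation of Lemma \ref{L4}, which it is not.

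What is missing is precisely the bridge the paper builds. Writing $M\otimes_{\Z[F]}N=(M\otimes_{\Z}N)_F$ (coinvariants of the diagonal action), one obtains the composite-functor spectral sequence $H_p(F,{\sf Tor}_q^{\Z}(M,N))\Rightarrow {\sf Tor}_{p+q}^{\Z[F]}(M,N)$. In total degree $2$ the possible contributions are $H_2(F,M\otimes N)$, $H_1(F,{\sf Tor}_1^{\Z}(M,N))$ and $H_0(F,{\sf Tor}_2^{\Z}(M,N))$: the first vanishes because $F$ is free (so $H_p(F,-)=0$ for $p\geq 2$), the last because $\Z$ has global dimension one, and the middle one vanishes by the hypothesis. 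Hence ${\sf Tor}_2^{\Z[F]}(\Z[F]/\a,\Z[F]/\b)=0$, and only at this point does Lemma \ref{L4} --- whose invocation in your write-up, including the two-sided/one-sided remark and the identification of the kernel with ${\sf Tor}_1^{\Z[F]}(\Z[F]/\a,\b)$, is otherwise correct --- finish the proof. Note that the freeness of $F$ enters essentially here; your proposal never uses it, which is another sign that the statement you proved is not the statement of the lemma.
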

\begin{proof}
For any  $\mathbb Z[F]$-modules $M,N$ there is an isomorphism
$M\otimes_{\Z [F]}N=(M\otimes N)_F.$ Then there is a spectral
sequence of composition $H_*(F,{\sf Tor}_*(M,N))\Rightarrow {\sf
Tor}_*^{\mathbb Z[F]}(M,N).$ Using this spectral sequence we
obtain that ${\sf Tor}(\Z [F]/\a,\Z [F]/\b)=0$ and $H_2(F,-)=0$
imply ${\sf Tor}_2^{\Z [F]}(\Z [F]/\a,\Z [F]/\b)=0.$ Hence Lemma
\ref{L4} implies the claimed isomorphism.
\end{proof}

\begin{Lemma}\label{lemma49} Let ${\bf a}'\subset {\bf a}$ and ${\bf b'}\subset {\bf b}$ be ideals of $\Z [F]$ such that
${\sf Tor}(\Z [F]/{\bf a},\Z [F]/{\bf b})=0.$ Then the map $x\otimes y\mapsto xy$ induces
$$(\a/\a') \otimes_{\mathbb Z[F]} (\b/\b')\cong \a\b/(\a'\b+\a\b').$$
\end{Lemma}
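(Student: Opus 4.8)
The plan is to deduce Lemma \ref{lemma49} from Lemma \ref{L5} by a systematic application of the five-lemma / snake-lemma bookkeeping on the relevant tensor-product exact sequences. The key observation is that the multiplication map $\a/\a'\otimes_{\Z[F]}\b/\b'\to \a\b/(\a'\b+\a\b')$ is well defined (since $\a'\b$ and $\a\b'$ both map into the denominator), and that Lemma \ref{L5} already gives the isomorphism $\a\otimes_{\Z[F]}\b\cong\a\b$ in the ``undecorated'' case. So I would like to reduce the quotient statement to the unquotiented one.

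First I would set up the commutative diagram relating the four tensor products obtained from the short exact sequences $\a'\mono\a\epi\a/\a'$ and $\b'\mono\b\epi\b/\b'$. Concretely, tensoring $\b/\b'$ against the first short exact sequence, and using right-exactness of $\otimes_{\Z[F]}$, gives a right-exact row
\begin{equation*}
\a'\otimes_{\Z[F]}(\b/\b')\longrightarrow \a\otimes_{\Z[F]}(\b/\b')\longrightarrow (\a/\a')\otimes_{\Z[F]}(\b/\b')\longrightarrow 0,
\end{equation*}
and similarly tensoring $\a$ against $\b'\mono\b\epi\b/\b'$ gives
\begin{equation*}
\a\otimes_{\Z[F]}\b'\longrightarrow \a\otimes_{\Z[F]}\b\longrightarrow \a\otimes_{\Z[F]}(\b/\b')\longrightarrow 0.
\end{equation*}
The strategy is to identify $(\a/\a')\otimes_{\Z[F]}(\b/\b')$ as the cokernel of the combined map from $\a'\otimes_{\Z[F]}\b$ and $\a\otimes_{\Z[F]}\b'$ into $\a\otimes_{\Z[F]}\b$, i.e. to show
\begin{equation*}
(\a/\a')\otimes_{\Z[F]}(\b/\b')\cong \frac{\a\otimes_{\Z[F]}\b}{\,\mathrm{Im}(\a'\otimes_{\Z[F]}\b)+\mathrm{Im}(\a\otimes_{\Z[F]}\b')\,}.
\end{equation*}
This is a standard double-cokernel computation: a tensor product against a double quotient is the quotient of the tensor product by the two obvious submodules, and it follows by applying right-exactness twice.

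Next I would transport this identification across the isomorphism of Lemma \ref{L5}. Under $\a\otimes_{\Z[F]}\b\cong\a\b$ (multiplication), the image of $\a'\otimes_{\Z[F]}\b$ is exactly $\a'\b$ and the image of $\a\otimes_{\Z[F]}\b'$ is exactly $\a\b'$, since multiplication sends $x\otimes y$ to $xy$ and these submodules are generated by products. Hence the denominator $\mathrm{Im}(\a'\otimes_{\Z[F]}\b)+\mathrm{Im}(\a\otimes_{\Z[F]}\b')$ corresponds to $\a'\b+\a\b'$, and the quotient becomes $\a\b/(\a'\b+\a\b')$. Composing gives the desired isomorphism, and it visibly agrees with the multiplication map $x\otimes y\mapsto xy$ by naturality, which is what the statement asserts.

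The main obstacle is being careful that the hypothesis ${\sf Tor}(\Z[F]/\a,\Z[F]/\b)=0$ is enough to validate the identification of images, since the left-hand terms $\a'\otimes_{\Z[F]}\b$ and $\a\otimes_{\Z[F]}\b'$ carry their own $\mathsf{Tor}$-obstructions a priori. The point that makes it work is that I only need Lemma \ref{L5} for the pair $(\a,\b)$ itself—that single isomorphism $\a\otimes_{\Z[F]}\b\cong\a\b$ lets me read off the images of the two submodules \emph{inside} $\a\b$ directly, without needing the vanishing of $\mathsf{Tor}$ for the primed ideals. So the delicate step is purely diagrammatic: confirming that the cokernel presentation of the double quotient tensor product really does match, under the Lemma \ref{L5} isomorphism, the explicit subgroups $\a'\b$ and $\a\b'$ rather than some larger images coming from the tensor relations. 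I expect this to require nothing more than tracking generators through the multiplication map, but it is the one place where an error could creep in.
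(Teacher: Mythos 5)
Your proposal is correct and follows essentially the same route as the paper: both arguments use Lemma \ref{L5} only for the pair $(\a,\b)$, apply right-exactness of $\otimes_{\Z[F]}$ to the two short exact sequences $\a'\mono\a\epi\a/\a'$ and $\b'\mono\b\epi\b/\b'$, and identify the images of $\a'\otimes_{\Z[F]}\b$ and $\a\otimes_{\Z[F]}\b'$ inside $\a\b$ as $\a'\b$ and $\a\b'$ respectively. The only difference is cosmetic: the paper performs the two quotients sequentially (first obtaining $(\a/\a')\otimes_{\Z[F]}\b\cong\a\b/\a'\b$, then quotienting by the image of $(\a/\a')\otimes_{\Z[F]}\b'$), whereas you present $(\a/\a')\otimes_{\Z[F]}(\b/\b')$ as a simultaneous double cokernel of $\a\otimes_{\Z[F]}\b$.
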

\begin{proof}
Tensoring on $\b$ the short exact sequence $\a'\mono \a \epi \a/\a'$ we get the exact sequence
$$\a' \otimes_{\Z [F]} \b \longrightarrow \a\b \longrightarrow (\a/\a')\otimes_{\Z [F]} \b \longrightarrow 0.$$
The image of the map $\a'\otimes_{\Z [F]} \b \to \a\b$ is $\a'\b.$
Hence $(\a/\a')\otimes_{\Z [F]} \b \cong \a\b/\a'\b.$ Tensoring
the short exact sequence $\b'\mono \b\epi \b/\b'$ on $\a/\a'$ we
get the exact sequence
$$(\a/\a')\otimes_{\Z [F]} \b' \longrightarrow  \a\b/\a'\b \longrightarrow (\a/\a')\otimes_{\Z [F]} (\b/\b')\longrightarrow 0.$$
The image of the map $ (\a/\a')\otimes_{\Z [F]} \b'\to
\a\b/\a'\b'$ is $(\a'\b+\a\b')/\a'\b$ which implies the claimed
isomorphism.
\end{proof}

A {\bf functorial ideal} of $\Z [F]$ is a functor ${\bf x}:{\sf
Pres}(G)\to {\sf Ab}$ such that ${\bf x}$ is an ideal of $\Z [F]$
for any $\pi\in {\sf Pres}(G).$ For example, ${\bf x}=\r^n+\f^m.$

\begin{Lemma}\label{L6_5} If ${\bf x}$ is a functorial ideal of $\Z [F]$ such that $\Z [F]/{\bf x}$ is torsion free, then ${\bf x}$ is a flat $\Z [F]$-module.
\end{Lemma}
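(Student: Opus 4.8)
The plan is to reduce the flatness of $\bf x$ to the vanishing of a single second ${\sf Tor}$-group, and then to compute that group by means of the base-change spectral sequence already exploited in the proof of Lemma \ref{L5}. Write $M=\Z[F]/{\bf x}$ and treat $\bf x$ as a right $\Z[F]$-module (the argument for the left module structure is identical). Since $\Z[F]$ is a free, hence flat, $\Z[F]$-module, the long exact sequence of ${\sf Tor}_*^{\Z[F]}(-,N)$ attached to the short exact sequence ${\bf x}\mono \Z[F]\epi M$ collapses to natural isomorphisms ${\sf Tor}_1^{\Z[F]}({\bf x},N)\cong {\sf Tor}_2^{\Z[F]}(M,N)$ for every left $\Z[F]$-module $N$. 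Thus $\bf x$ is flat if and only if ${\sf Tor}_2^{\Z[F]}(M,N)=0$ for all $N$.

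Next I would invoke the spectral sequence of the composition $M\otimes_{\Z[F]}N=(M\otimes N)_F$, namely
$$E^2_{pq}=H_p\big(F,{\sf Tor}_q(M,N)\big)\ \Longrightarrow\ {\sf Tor}_{p+q}^{\Z[F]}(M,N),$$
exactly as in the proof of Lemma \ref{L5}. Here two finiteness facts combine. Because $F$ is free it has cohomological dimension one, so $H_p(F,-)=0$ for $p\geq 2$; and because $\Z$ is a principal ideal domain, ${\sf Tor}_q(-,-)=0$ for $q\geq 2$. Hence $E^2$ is supported in the square $0\leq p,q\leq 1$, there is no room for nonzero differentials, and the sequence degenerates at $E^2$. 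In total degree $2$ the only possibly nonzero entry is $E^2_{11}$, which yields a natural isomorphism
$$ {\sf Tor}_2^{\Z[F]}(M,N)\cong H_1\big(F,{\sf Tor}_1(M,N)\big).$$

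Finally I would feed in the hypothesis. By assumption $M=\Z[F]/{\bf x}$ is torsion free as an abelian group; over the principal ideal domain $\Z$ a torsion free module is flat, so ${\sf Tor}_1(M,N)=0$ for every abelian group $N$. Consequently $H_1\big(F,{\sf Tor}_1(M,N)\big)=0$, whence ${\sf Tor}_2^{\Z[F]}(M,N)=0$ for all $N$, and the reduction of the first paragraph shows that $\bf x$ is flat.

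I expect the only delicate point to be the bookkeeping for the spectral sequence: one must check that both the group-homology direction and the $\Z$-${\sf Tor}$ direction are truncated at degree one, so that $E^2$ occupies a $2\times 2$ block and the corner term $E^2_{11}$ survives to $E^\infty$ as the single contribution in total degree $2$. Everything else is a formal consequence of $\Z$ being a PID together with the freeness of $F$.
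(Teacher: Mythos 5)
Your proof is correct and follows essentially the same route as the paper's own: the paper likewise chains the identifications ${\sf Tor}_1^{\Z[F]}({\bf x},-)\cong {\sf Tor}_2^{\Z[F]}(\Z[F]/{\bf x},-)\cong H_1\bigl(F,{\sf Tor}(\Z[F]/{\bf x},-)\bigr)$ using the spectral sequence $H_*(F,{\sf Tor}_*(M,N))\Rightarrow {\sf Tor}_*^{\Z[F]}(M,N)$ and then concludes from torsion-freeness over $\Z$. The only difference is that you spell out the degeneration bookkeeping (vanishing of $H_p(F,-)$ for $p\geq 2$ and of ${\sf Tor}_q^{\Z}$ for $q\geq 2$, survival of $E^2_{11}$) that the paper leaves implicit.
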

\begin{proof}
Using the spectral sequence $H_*(F,{\sf Tor}_*(M,N))\Rightarrow
{\sf Tor}^{\Z [F]}(M,N),$ we get $${\sf Tor}_1^{\Z [F]}({\bf
x},-)={\sf Tor}_2^{\Z [F]}(\Z [F]/{\bf x},-)=H_1(F,{\sf Tor}(\Z
[F]/{\bf x},-)).$$ So if $\Z [F]/{\bf x}$ is torsion free, then
${\bf x}$ is flat over $\Z [F].$
\end{proof}

\begin{Lemma}\label{L7} If ${\bf x},{\bf y}$ are functorial ideals of $\Z [F]$ and $\Z [F]/{\bf x}$ (or $\Z [F]/{\bf y}$) is torsion free as an abelian group for all $\pi\in {\sf Pres}(G)$, then for $i\geq 0$
$$\lim^i\: {\bf x}\f{\bf y} =0.$$
\end{Lemma}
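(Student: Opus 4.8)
The plan is to realise the functorial ideal ${\bf x}\f{\bf y}$ as the triple tensor product ${\bf x}\otimes_{\Z [F]}\f\otimes_{\Z [F]}{\bf y}$ of representations of ${\sf Pres}(G)$ and then to read off the vanishing of its higher limits from Proposition \ref{Proposition_I(F)tesor}. Since the hypothesis is symmetric, I may assume that $\Z [F]/{\bf x}$ is torsion free as an abelian group, the case when $\Z [F]/{\bf y}$ is torsion free being entirely analogous. By Lemma \ref{L6_5} this makes ${\bf x}$ a flat $\Z [F]$-module.

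The heart of the argument is the identification ${\bf x}\otimes_{\Z [F]}\f\otimes_{\Z [F]}{\bf y}\cong {\bf x}\f{\bf y}$, induced by multiplication. First, since $\Z [F]/\f\cong \Z$ is torsion free we have ${\sf Tor}(\Z [F]/\f,\Z [F]/{\bf y})=0$, so Lemma \ref{L5} yields $\f\otimes_{\Z [F]}{\bf y}\cong \f{\bf y}$. Next, using that ${\bf x}$ is flat, I would tensor the inclusion of left ideals $\f{\bf y}\mono \Z [F]$ with ${\bf x}$ on the left; flatness keeps the sequence exact, so the multiplication map ${\bf x}\otimes_{\Z [F]}\f{\bf y}\to {\bf x}\otimes_{\Z [F]}\Z [F]\cong {\bf x}$ is injective, and its image is exactly ${\bf x}\f{\bf y}$. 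Thus ${\bf x}\otimes_{\Z [F]}\f{\bf y}\cong {\bf x}\f{\bf y}$, and combining this with the previous isomorphism and the associativity of $\otimes_{\Z [F]}$ gives the claimed identification.

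With the identification in hand the conclusion is immediate: Proposition \ref{Proposition_I(F)tesor}, applied with the flat module ${\bf x}$ in the left-hand slot, asserts that ${\bf x}\otimes_{\Z [F]}\f\otimes_{\Z [F]}{\bf y}$ is $\infty$-monoadditive and that all of its higher limits vanish, whence $\lim^i{\bf x}\f{\bf y}=0$ for every $i\geq 0$. When instead $\Z [F]/{\bf y}$ is torsion free one argues symmetrically, first identifying ${\bf x}\otimes_{\Z [F]}\f\cong {\bf x}\f$ by Lemma \ref{L5} (again ${\sf Tor}(\Z [F]/{\bf x},\Z)=0$ since $\Z$ is free over $\Z$) and then using flatness of ${\bf y}$ on the right.

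The one point that needs genuine care --- and the main obstacle --- is naturality: each map appearing above (the multiplication maps, the inclusions of functorial ideals, and the isomorphism of Lemma \ref{L5}) is a natural transformation over ${\sf Pres}(G)$, so the pointwise isomorphisms assemble into an isomorphism of representations, which is exactly what is required before $\lim^i$ is applied. Once this is checked, the remaining verifications are the routine bookkeeping of products of ideals and of the Tor-vanishing needed to invoke Lemma \ref{L5}.
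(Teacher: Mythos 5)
Your proof is correct and takes essentially the same route as the paper: the paper's own proof is just the one-line citation of Lemmas \ref{L5}, \ref{L6_5} and Proposition \ref{Proposition_I(F)tesor}, which is exactly the combination you spell out (your use of flatness of ${\bf x}$ to identify ${\bf x}\otimes_{\Z [F]}\f{\bf y}\cong {\bf x}\f{\bf y}$ is an immaterial variant of a second application of Lemma \ref{L5}, since torsion-freeness of $\Z [F]/{\bf x}$ already kills the relevant ${\sf Tor}$). No gaps; the naturality point you flag is indeed the only thing to check before applying $\lim^i$.
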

\begin{proof}
It follows from Lemmas \ref{L5}, \ref{L6_5}  and Proposition \ref{Proposition_I(F)tesor}.
\end{proof}

\begin{Lemma}\label{L8} The abelian group $\Z [F]/\r^n$ is a free abelian group for any $n\geq 1.$
\end{Lemma}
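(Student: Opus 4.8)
The plan is to induct on $n$ and build $\Z[F]/\r^n$ out of its associated graded pieces $\r^{k}/\r^{k+1}$, each of which I will show is free abelian. The base case $n=1$ is immediate: since $R$ is normal in $F$, the ideal $\r=(R-1)\Z[F]$ is two-sided and the quotient map identifies $\Z[F]/\r\cong \Z G$, which is free abelian with basis $G$.

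For the inductive step the key observation is that the graded quotient $\r^{n-1}/\r^n$ is free abelian. First I would note that, because $R$ is normal, $\r$ is a two-sided ideal, so $\r^{n}=\r^{n-1}\r$; hence, viewing $\r^{n-1}$ as a right $\Z[F]$-module, one has $\r^{n-1}/\r^n=\r^{n-1}\otimes_{\Z[F]}\Z G$. By Corollary \ref{C1} the module $\r^{n-1}$ is free as a right $\Z[F]$-module, so this tensor product is a free $\Z G$-module; in particular $\r^{n-1}/\r^n$ is a free abelian group.

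Now assume $\Z[F]/\r^{n-1}$ is free abelian. I would consider the short exact sequence of abelian groups
$$0\longrightarrow \r^{n-1}/\r^n \longrightarrow \Z[F]/\r^n \longrightarrow \Z[F]/\r^{n-1}\longrightarrow 0.$$
The quotient $\Z[F]/\r^{n-1}$ is free abelian by the inductive hypothesis, hence projective as a $\Z$-module, so the sequence splits and $\Z[F]/\r^n\cong (\r^{n-1}/\r^n)\oplus(\Z[F]/\r^{n-1})$ is a direct sum of free abelian groups. This completes the induction.

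The only real content is the splitting in the last step: a quotient of one free abelian group by another need not be free (witness $\Z/n\Z$), so the conclusion genuinely relies on the projectivity of $\Z[F]/\r^{n-1}$ rather than on $\Z[F]/\r^n$ being a subquotient of the free abelian group $\Z[F]$. The two points I would handle with a little care are the two-sidedness of $\r$ (which is where normality of $R$ enters, and which yields $\r^n=\r^{n-1}\r$) and the identification of $\r^{n-1}/\r^n$ with $\r^{n-1}\otimes_{\Z[F]}\Z G$; both are routine, and everything else is bookkeeping around Corollary \ref{C1}.
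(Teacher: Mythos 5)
Your proof is correct and takes essentially the same route as the paper: both reduce the statement to showing the graded pieces $\r^k/\r^{k+1}\cong\r^k\otimes_{\Z[F]}\Z G$ are free abelian, using Corollary \ref{C1} to see $\r^k$ is a free right $\Z[F]$-module so that this tensor product is a free $\Z G$-module. The only difference is presentational: you spell out the splitting induction that the paper compresses into the phrase ``it is enough to prove that $\r^k/\r^{k+1}$ is free abelian.''
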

\begin{proof}
It is enough to prove that  $\r^k/\r^{k+1}$ is free abelian.
 Since $\r^k$ is a free $F$-module, $\r^k/\r^{k+1}=\r^k\otimes_{\Z [F]}\Z G$ is a free $G$-module, and hence, it is a free abelian group.
\end{proof}

\begin{Corollary} For any $i,n\geq 0$ and a functorial ideal $ {\bf x}$ of $\Z [F]:$
$$\lim^i\: \r^n\f{\bf x}=0 \hspace{1cm} \text{ and }\hspace{1cm} \lim^i\:{\bf x}\f\r^n=0.$$
\end{Corollary}
\begin{proof}
Follows from Lemmas \ref{L7} and \ref{L8}.
\end{proof}

\begin{Lemma}
For any abelian group $A$
$$\lim^i\: \r^n\otimes A= \lim^i\: R_{ab}^{\otimes n}\otimes A =
\begin{cases} I^{\otimes n}\otimes A, & i=n,\\
0,& i\ne n. \end{cases}$$
\end{Lemma}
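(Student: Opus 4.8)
The plan is to prove the statement by induction on $n$, after first replacing $\r^n\otimes A$ by $R_{ab}^{\otimes n}\otimes A$. For the replacement, Lemma \ref{L3} gives a short exact sequence of representations $0\to\r^n\f\to\r^n\to R_{ab}^{\otimes n}\to 0$; since $R_{ab}^{\otimes n}$ is free abelian (as in the proof of Lemma \ref{L3}), we have ${\sf Tor}(R_{ab}^{\otimes n},A)=0$, so the sequence stays exact after $\otimes A$:
$$0\to\r^n\f\otimes A\to\r^n\otimes A\to R_{ab}^{\otimes n}\otimes A\to 0.$$
Writing $\r^n\f\otimes A\cong\r^n\otimes_{\Z[F]}\f\otimes_{\Z[F]}(\Z[F]\otimes A)$ via Lemma \ref{L5} and using that $\r^n$ is flat (Corollary \ref{C1}), Proposition \ref{Proposition_I(F)tesor} shows the left-hand term is $\infty$-monoadditive, so $\lim^i(\r^n\f\otimes A)=0$ for all $i$. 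The long exact sequence of higher limits then yields $\lim^i(\r^n\otimes A)\cong\lim^i(R_{ab}^{\otimes n}\otimes A)$, which is the first asserted equality; it remains to compute one side.

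Here $I$ denotes the augmentation ideal $I(G)$ of $\Z[G]$. The basic input is the relation sequence: since $\r=\ker(\Z[F]\epi\Z[G])$ lies inside $\f=I(F)$ and $\f/\r\cong I(G)$, there is a short exact sequence of representations $0\to\r\to\f\to I(G)\to 0$ in which $I(G)$ is a \emph{constant} functor. As $I(G)$ is free abelian, tensoring with $A$ preserves exactness: $0\to\r\otimes A\to\f\otimes A\to I(G)\otimes A\to 0$. Now $\lim^i(\f\otimes A)=0$ for all $i$ by Corollary \ref{Lemma_F_ab}, while $I(G)\otimes A$ is constant and ${\sf Pres}(G)$ is contractible, so $\lim^0(I(G)\otimes A)=I(G)\otimes A$ and $\lim^i(I(G)\otimes A)=0$ for $i>0$. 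The long exact sequence gives $\lim^1(\r\otimes A)\cong I(G)\otimes A$ and $\lim^i(\r\otimes A)=0$ for $i\ne 1$, which is the case $n=1$ and fixes the meaning of $I$.

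For the inductive step I would tensor the relation sequence by the flat right module $\r^{n-1}$ over $\Z[F]$. Flatness (Corollary \ref{C1}) makes $\r^{n-1}\otimes_{\Z[F]}(-)$ exact, and Lemma \ref{L5} identifies the tensor products with ideal products, giving a short exact sequence
$$0\to\r^n\otimes A\to\r^{n-1}\f\otimes A\to\r^{n-1}\otimes_{\Z[F]}(I(G)\otimes A)\to 0.$$
Its middle term is again $\infty$-monoadditive by Proposition \ref{Proposition_I(F)tesor}, so all its limits vanish, and the long exact sequence collapses to a dimension shift $\lim^i(\r^n\otimes A)\cong\lim^{i-1}\big(\r^{n-1}\otimes_{\Z[F]}(I(G)\otimes A)\big)$. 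Thus everything reduces to showing that the twisted term $\r^{n-1}\otimes_{\Z[F]}(I(G)\otimes A)$ has its limits concentrated in degree $n-1$ with value $I(G)^{\otimes n}\otimes A=I(G)^{\otimes(n-1)}\otimes(I(G)\otimes A)$.

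The main obstacle is precisely this last point. One cannot feed the twisted term directly into the inductive hypothesis, because the hypothesis controls $\r^{n-1}\otimes_{\Z}(\text{constant})$, whereas here the coefficient $I(G)\otimes A$ is glued to $\r^{n-1}$ over $\Z[F]$ rather than over $\Z$; naively unwinding the $\Z[F]$-action produces internal tensor products over $\Z[G]$ and ${\sf Tor}^{\Z[F]}$-terms instead of the external $\otimes_{\Z}$ occurring in the answer. To resolve this I would argue through the monoadditive calculus rather than the bare relation sequence: using the free-product decomposition $I(F_1*F_2)\cong(I(F_1)\otimes_{\Z[F_1]}\Z[F_1*F_2])\oplus(I(F_2)\otimes_{\Z[F_2]}\Z[F_1*F_2])$ already exploited in Proposition \ref{Proposition_I(F)tesor}, together with Corollary \ref{C1} for the powers $\r^n$, one shows by induction that $R_{ab}^{\otimes n}\otimes A$ is $n$-monoadditive and that $\Sigma^n(R_{ab}^{\otimes n}\otimes A)$ is the constant functor $I(G)^{\otimes n}\otimes A$; it is exactly the free-product ``separation'' of the $n$ tensor factors under $\pi\sqcup\dots\sqcup\pi$ that converts the internal $\Z[G]$-tensor into the external $\Z$-tensor $I(G)^{\otimes n}$. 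Granting this, Corollary \ref{corollary_monoadditive} gives $\lim^i(R_{ab}^{\otimes n}\otimes A)=0$ for $i<n$ and $\lim^i(R_{ab}^{\otimes n}\otimes A)=\lim^{i-n}\Sigma^n(R_{ab}^{\otimes n}\otimes A)$; since the latter is constant and ${\sf Pres}(G)$ is contractible, this is $I(G)^{\otimes n}\otimes A$ for $i=n$ and $0$ for $i>n$. Combined with the first paragraph this gives the claim, and the computation of $\Sigma^n$ through the free-product decomposition is the step I expect to demand the most care.
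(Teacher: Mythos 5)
Your first paragraph (reducing $\lim^i\:\r^n\otimes A$ to $\lim^i\: R_{ab}^{\otimes n}\otimes A$) and your base case $n=1$ are correct and agree with the paper's proof. The gap is the inductive step. You correctly diagnose the obstacle in your own induction: tensoring the relation sequence with $\r^{n-1}$ over $\Z[F]$ produces the twisted term $\r^{n-1}\otimes_{\Z[F]}(I(G)\otimes A)\cong(\r^{n-1}/\r^{n})\otimes_{\Z G}(I(G)\otimes A)$, a $\Z G$-internal tensor product that the inductive hypothesis (which governs only external tensors $\r^{n-1}\otimes_\Z B$ with constant $B$) says nothing about. But the repair you then propose is not a proof: the claim that $R_{ab}^{\otimes n}\otimes A$ is $n$-monoadditive with $\Sigma^n(R_{ab}^{\otimes n}\otimes A)$ equal to the \emph{constant} functor $I^{\otimes n}\otimes A$ is exactly where all of the content lies, it is nowhere verified, and it is doubtful as stated. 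Already for $n=2$: writing $B_k$ for the (injective) image of $R_{ab}(i_k):R_{ab}(\pi)\to C:=R_{ab}(\pi\sqcup\pi)$, the case $n=1$ gives $C/(B_1\oplus B_2)\cong I$, and a direct computation shows that $\Sigma(R_{ab}^{\otimes 2}\otimes A)$ contains $(B_1\otimes B_2\oplus B_2\otimes B_1)\otimes A\cong(R_{ab}^{\otimes 2}\otimes A)^{\oplus 2}$ as a subfunctor, with quotient filtered by $((B_1\oplus B_2)\otimes I)\otimes A$ and $(I\otimes C)\otimes A$. So $\Sigma$ does not ``separate the tensor factors'': its value contains two copies of the very functor you are trying to compute, and to continue you would have to prove both that this filtered functor is again monoadditive and that one more application of $\Sigma$ collapses it to exactly a constant functor. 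Neither claim is addressed, and together they are at least as hard as the lemma itself; Corollary \ref{corollary_monoadditive} cannot be invoked before they are settled.

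The paper avoids this trap by shifting dimension one factor at a time using only external ($\Z$-) tensor products, resolving a single factor $R_{ab}$ by $\f/\r\f$ rather than by $\f$. Lemma \ref{L3} gives the exact sequence of functors $0\to R_{ab}\to\f/\r\f\to I\to 0$, all free abelian; tensoring it over $\Z$ with the spectator $I^{\otimes k}\otimes R_{ab}^{\otimes (n-k-1)}\otimes A$ gives
$$0\to I^{\otimes k}\otimes R_{ab}^{\otimes (n-k)}\otimes A\longrightarrow P\longrightarrow I^{\otimes (k+1)}\otimes R_{ab}^{\otimes (n-k-1)}\otimes A\to 0,$$
where $P=(I^{\otimes k}\otimes R_{ab}^{\otimes (n-k-1)}\otimes A\otimes\Z G)\otimes_{\Z[F]}\f$ satisfies $\lim^i\: P=0$ for all $i$ by Proposition \ref{Proposition_I(F)tesor}. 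The long exact sequence therefore trades one factor $R_{ab}$ for one constant factor $I$ at the cost of one degree; after $n$ steps, $\lim^i\: R_{ab}^{\otimes n}\otimes A\cong\lim^{i-n}\: I^{\otimes n}\otimes A$, which contractibility of ${\sf Pres}(G)$ evaluates to the stated answer. The point you missed is that the remaining $n-1$ factors should stay as inert $\Z$-tensor spectators while one factor is resolved by $\f\otimes_{\Z[F]}\Z G=\f/\r\f$; this is what keeps every coefficient untwisted and lets the induction close.
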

\begin{proof} Lemma \ref{L3} gives the short exact sequence:
$$0\longrightarrow R_{ab} \longrightarrow \f/\r\f \longrightarrow I \longrightarrow 0.$$
Since the short exact sequence  consists of free abelian groups,
tensoring it on $I^{\otimes k}\otimes R_{ab}^{\otimes n-k-1}\otimes A$ we get the short exact sequence
$$0\longrightarrow I^{\otimes k}\otimes R_{ab}^{\otimes n-k}\otimes A \longrightarrow P \longrightarrow I^{\otimes k+1}\otimes R_{ab}^{\otimes n-k-1}\otimes A
\longrightarrow 0, $$
where $$P=I^{\otimes k}\otimes  R_{ab}^{\otimes n-k-1}\otimes
A\otimes \f/\r\f=(I^{\otimes k}\otimes  R_{ab}^{\otimes
n-k-1}\otimes A\otimes \Z G)\otimes_{\Z [F]} \f.$$ Using
Proposition \ref{Proposition_I(F)tesor} we obtain $\lim^i\: P=0$
for all $i.$ Hence $$\lim^i I^{\otimes k}\otimes R_{ab}^{\otimes
n-k}\otimes A=\lim^{i-1} I^{\otimes k+1}\otimes R_{ab}^{\otimes
n-k-1}\otimes A.$$ By induction we get $\lim^i\:  R_{ab}^{\otimes
n}\otimes A =\lim^{i-n}\: \  I(G)^{\otimes n}\otimes A.$ It
follows that
$$ \lim^i\: R_{ab}^{\otimes n}\otimes A =
\begin{cases} I^{\otimes n}\otimes A, & i=n,\\
0,& i\ne n. \end{cases}$$
Now we prove that $\lim^i\: \r^n\otimes A=\lim^i\:R_{ab}^{\otimes n} \otimes A. $ Lemma \ref{L3} gives a short exact sequence
$\r^n\f\mono \r^n \epi R_{ab}^{\otimes n}.$ Since it consists of free abelian groups, we get the following short exact sequence:
$$0\longrightarrow\r^n\f\otimes A\longrightarrow \r^n\otimes A \longrightarrow R_{ab}^{\otimes n}\otimes A \longrightarrow 0.$$
Using Lemma \ref{L5} and Lemma \ref{L8} we get $\r^n\f\otimes
A=\r^n\otimes_{\Z [F]} \f\otimes_{\Z [F]} (\Z [F]\otimes A).$
Proposition \ref{Proposition_I(F)tesor} implies that $\lim^*
\r^n\f\otimes A=0.$
\end{proof}

\subsection{Group homology as higher limits}

\begin{Theorem}\label{theorem_homology} For any group $G$ and  abelian group $A$

$$H_{2n-i}(G,A)=\lim^i\: \r^n/(\f\r^n+\r^n\f) \otimes A, \hspace{1cm} 0\leq i \leq n-1,$$

$$ H_{2n+1-i}(G,A)=\lim^i\: \f\r^n/(\r^{n+1}+\f\r^n\f) \otimes A, \hspace{1cm} 0\leq i \leq n-1.$$
\end{Theorem}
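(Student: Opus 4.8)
The plan is to identify both families of higher limits with the homology of an explicit free resolution of $\Z$ over $\Z G$, reducing the computation of $\lim^i$ to homology of a two-term-per-degree complex of representations whose pieces are already understood. The central tool is the free resolution
\begin{equation}\label{eq_res}
\dots \to \frac{\f\r^2}{\f\r^3} \to \frac{\r^2}{\r^3} \to \frac{\f\r}{\f\r^2} \to \frac{\r}{\r^2} \to \frac{\f}{\f\r} \to \frac{\Z[F]}{\r} \to \Z \to 0
\end{equation}
established in the lemma preceding the theorem, whose $2n$th term is $\r^n/\r^{n+1}$ and $(2n+1)$st term is $\f\r^n/\f\r^{n+1}$. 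First I would tensor \eqref{eq_res} with $A$ over $\Z G$; since every term is a free $G$-module (shown in the proof of that lemma) and a free abelian group (Lemma~\ref{L8}), the resulting complex computes $H_*(G,A)$ objectwise, so $H_m(G,A)=H_m$ of a complex of representations ${\sf Pres}(G)\to {\sf Ab}$.

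Next I would apply the hyperhomology/spectral-sequence machinery of Corollary~\ref{corollary_spectral} to this complex of representations. The key input is that the individual terms, after taking the associated graded quotients modulo $\f$ on the appropriate side, split into pieces governed by the preceding computations: the even term $\r^n/\r^{n+1}$ contributes $\r^n/(\f\r^n+\r^n\f)\otimes A$ together with an $\f$-divisible correction term of the form ${\bf x}\f{\bf y}\otimes A$, and likewise for the odd term $\f\r^n/\f\r^{n+1}$, which yields $\f\r^n/(\r^{n+1}+\f\r^n\f)\otimes A$ plus a correction. By Lemma~\ref{L7} and its corollary, all the correction pieces of the form $\r^a\f{\bf x}\otimes A$ or ${\bf x}\f\r^a\otimes A$ have vanishing higher limits in \emph{all} degrees. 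Consequently, when I run the spectral sequence $E_1^{pq}=\lim^q\mathcal F^p \Rightarrow \lim\: H^n(\mathcal F^\bullet)$, only the "core" quotients survive on the $E_1$-page, and I expect the spectral sequence to degenerate because for each fixed total degree only one of the two constituent pieces of each term carries nonzero higher limits.

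The computation then rests on the behaviour of $\lim^i$ on the surviving core quotients. Here I would invoke the final lemma of the technical section, which computes $\lim^i\:\r^n\otimes A$ and $\lim^i\:R_{ab}^{\otimes n}\otimes A$ and shows they are concentrated in a single degree $i=n$. A careful bookkeeping of which homological degree $m=2n-i$ or $m=2n+1-i$ corresponds to which term in \eqref{eq_res}, combined with the concentration result, should pin down $\lim^i$ of each core quotient and match it against $H_{2n-i}(G,A)$ and $H_{2n+1-i}(G,A)$ respectively in the stated range $0\leq i\leq n-1$.

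The hard part will be the organisation of the spectral sequence and the identification of exactly which differentials vanish: I must verify that the short exact sequences relating $\r^n/\r^{n+1}$ (resp. $\f\r^n/\f\r^{n+1}$) to the core quotient $\r^n/(\f\r^n+\r^n\f)\otimes A$ (resp. $\f\r^n/(\r^{n+1}+\f\r^n\f)\otimes A$) and the $\lim$-acyclic remainder are functorial in $\pi$, and that the induced maps in the spectral sequence are compatible with the boundary maps of \eqref{eq_res}. The restriction $0\leq i\leq n-1$ is precisely the range in which the acyclic corrections do not interfere with the single surviving degree, so I would pay close attention to the boundary of this range, where the two families of formulas meet and where a stray nonzero contribution could otherwise appear.
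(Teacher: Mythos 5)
Your toolkit (the Gruenberg resolution, the spectral sequence of Corollary~\ref{corollary_spectral}, the vanishing Lemma~\ref{L7}, and the concentration of $\lim^i R_{ab}^{\otimes n}\otimes A$ in degree $n$) is exactly the paper's toolkit, but there is a fatal gap: with the \emph{full} resolution the functors whose limits the theorem computes never appear. Since $\r\subseteq\f$ we have $\r^{n+1}\subseteq\r^n\f$ and $\f\r^{n+1}\subseteq\f\r^n\f$, so tensoring the untruncated resolution with $A$ over $\Z G$ gives
$$(\r^n/\r^{n+1})\otimes_{\Z G}A\;\cong\;\r^n/(\r^{n+1}+\r^n\f)\otimes A\;=\;(\r^n/\r^n\f)\otimes A\;\cong\;R_{ab}^{\otimes n}\otimes A,$$
and likewise $(\f\r^n/\f\r^{n+1})\otimes_{\Z G}A\cong(\f\r^n/\f\r^n\f)\otimes A$. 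Thus the ``core'' quotients $\r^n/(\f\r^n+\r^n\f)\otimes A$ and $\f\r^n/(\r^{n+1}+\f\r^n\f)\otimes A$ are simply not terms of your complex. Your proposed remedy --- splitting off a $\lim$-acyclic correction --- cannot work: the kernel of $R_{ab}^{\otimes n}\otimes A\epi\r^n/(\f\r^n+\r^n\f)\otimes A$ is (up to Tor) $\bigl((\f\r^n+\r^n\f)/\r^n\f\bigr)\otimes A$, a quotient rather than a product of ideals of the shape ${\bf x}\f{\bf y}$, so Lemma~\ref{L7} says nothing about it; and its higher limits cannot all vanish, because $\lim^i R_{ab}^{\otimes n}\otimes A$ is concentrated in the single degree $i=n$, while the theorem you are proving asserts that $\lim^i$ of the core quotient is $H_{2n-i}(G,A)$, in general nonzero for every $0\leq i\leq n-1$; if the correction were $\lim$-acyclic these two computations would have to agree. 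Concretely, with the full resolution the $E_1$-page consists only of the entries $I^{\otimes k}\otimes A$ in positions $(2k,k)$, connected by generally nonzero $d_2$-differentials, so there is no degeneration either; moreover the complex is infinite, whereas Corollary~\ref{corollary_spectral} is stated for bounded-below complexes and the paper applies it only to a finite one.

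The missing idea is \emph{truncation}. The paper works with the $2n$-truncated resolution ${\sf t}_{2n}P$, whose top term is not $\r^n/\r^{n+1}$ but the syzygy $\r^n/\f\r^n={\rm im}\bigl(\r^n/\r^{n+1}\to\f\r^{n-1}/\f\r^n\bigr)$; a resolution truncated at a syzygy still computes $H_i(G,A)$ for $0\leq i\leq 2n$. It is precisely this replacement that makes the core quotient appear: $(\r^n/\f\r^n)\otimes_{\Z G}A\cong\r^n/(\f\r^n+\r^n\f)\otimes A$. The resulting finite complex then behaves as you describe --- odd terms $(\f\r^k/\f\r^k\f)\otimes A$ are $\lim$-acyclic, even terms below the top are $R_{ab}^{\otimes k}\otimes A$ concentrated in degree $k$ --- and the lacunary pattern of the first page gives $\lim^i\F\cong H_{2n-i}(G,A)$ for $0\leq i\leq n-1$. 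The second formula is obtained the same way from the $(2n+1)$-truncation, whose top syzygy is $\f\r^n/\r^{n+1}$, yielding $\f\r^n/(\r^{n+1}+\f\r^n\f)\otimes A$ after tensoring. Without the truncation step your argument computes a genuinely different spectral sequence and cannot reach the stated isomorphisms.
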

\begin{proof}
We prove the first formula, the second formula can be proved similarly. Consider '$2n$-truncated Gruenberg resolution':
$$ {\sf t}_{2n}P: \ \  (
  0\to \r^n/\f\r^n \mono  \f\r^{n-1}/\f\r^n \to{\dots} \to     \r/\r^2 \to \f/\f\r \to \Z [F]/\r \to  0)$$
Then $H_i({\sf t}_{2n}P\otimes_{\Z G}A)=H_i(G,A)$ for $0\leq i\leq 2n.$
Since $A$ has trivial action of $G,$ we get $M\otimes_{\Z G}A=M/M\f\otimes A.$
Hence, ${\sf t}_{2n}P\otimes_{\Z G}A$ is equal to
$$ \r^n/(\f\r^n+\r^n\f)\otimes A \to   (\f\r^{n-1}/\f\r^{n-1}\f) \otimes A \to{\dots} \to     \r/\r\f\otimes A \to \f/\f^2\otimes A \to \Z [F]/\f\otimes A.$$
Apply the spectral sequence of higher limits to this complex:
$$ \lim^i\: {\sf t}_{2n}P_j\otimes_{\Z  G}A \Rightarrow H_{j-i}(G,A)  $$
The short exact sequence $$\f\r^k\f\otimes A\mono \f\r^k\otimes
A\epi (\f\r^k/\f\r^k\f)\otimes A$$ implies $\lim^i\:
(\f\r^k/\f\r^k\f)\otimes A=0.$ Remind $(\r^k/\r^k\f)\otimes
A=R_{ab}^{\otimes k}\otimes A.$ Set
$\F=(\r^n/(\f\r^n+\r^n\f))\otimes A.$ Then the first page of the
spectral sequence looks as follows:

\begin{footnotesize}
\begin{center}
\begin{tabular}{c|ccccccc}
$\vdots$ & $\vdots$    & $\vdots$& $\vdots$& $\vdots$ &  $\vdots$ & $\vdots$  & $\vdots$ \\
$n$ & $\lim^{n}\: \F$     & 0 & 0& $\dots$ &  0 & 0  & 0 \\
$n-1$ & $\lim^{n-1}\: \F$ & 0 & $I^{\otimes n-1}\otimes A$ & $\dots$ & 0 & 0 & 0 \\
$\vdots$ & $\vdots$       & $\vdots$ & $\vdots$& $\dots$& $\vdots$ & $\vdots$ & $\vdots$ \\
$2$ & $\lim^2\: \F$       & 0 & 0& $\dots$ & 0 & 0& 0\\
$1$ & $\lim^1\: \F$       & 0 & 0& $\dots$&  $I\otimes A$ & 0 & 0\\
$0$ & $\lim^0\: \F$       & 0 & 0& $\dots$ & 0 & 0 & $A$ \\ \hline
  & $2n$          & $2n-1$  & $2n-2$ & $\dots$ & 2 & 1 & 0
\end{tabular}
\end{center}
\end{footnotesize}
By the lacunary reason we get the isomorphisms.
\end{proof}

\begin{Corollary}
For any group $G,$  abelian group $A$ and $n\geq 1$
$$H_{2n}(G,A)=\lim^0\: \r^n/(\f\r^n+\r^n\f) \otimes A,$$
$$ H_{2n+1}(G,A)=\lim^0\: \f\r^n/(\r^{n+1}+\f\r^n\f) \otimes A.$$
\end{Corollary}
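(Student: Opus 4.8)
The plan is simply to specialise Theorem \ref{theorem_homology} to the case $i=0$, so there is essentially nothing to prove beyond checking a range condition. First I would note that both displayed formulas in Theorem \ref{theorem_homology} are asserted over the range $0\leq i\leq n-1$; since the corollary hypothesises $n\geq 1$, we have $n-1\geq 0$, and therefore $i=0$ always falls inside the admissible range. Setting $i=0$ in the first formula of the theorem produces $H_{2n}(G,A)=\lim^0\:\r^n/(\f\r^n+\r^n\f)\otimes A$, and setting $i=0$ in the second produces $H_{2n+1}(G,A)=\lim^0\:\f\r^n/(\r^{n+1}+\f\r^n\f)\otimes A$, which are precisely the two claims of the corollary.

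I do not expect any genuine obstacle: the corollary is a direct specialisation requiring no new input. The only thing worth recording, if desired, is the explicit meaning of the right-hand sides. Since ${\sf Pres}(G)$ is strongly connected, Proposition \ref{Propostion_inv} identifies $\lim^0=\lim$ of any representation with its subgroup of invariant elements evaluated at a fixed presentation $\pi:F\epi G$. Hence the corollary may be read as computing $H_{2n}(G,A)$ and $H_{2n+1}(G,A)$ as the invariants of $\r^n/(\f\r^n+\r^n\f)\otimes A$ and of $\f\r^n/(\r^{n+1}+\f\r^n\f)\otimes A$ respectively, a description that is independent of the chosen presentation.
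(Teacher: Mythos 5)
Your proposal is correct and coincides with the paper's own (implicit) argument: the corollary is stated without proof precisely because it is the specialisation $i=0$ of Theorem \ref{theorem_homology}, and your check that $n\geq 1$ guarantees $0\leq i\leq n-1$ is the only point that needs verifying. The added remark identifying $\lim^0$ with invariants via strong connectedness of ${\sf Pres}(G)$ is a correct and harmless supplement.
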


\begin{Theorem}[\cite{Ivanov-Mikhailov}] For any group $G$ and  a $G$-module $M$ there are isomorphisms:
$$ H_{2n-i}(G,M)=\lim^i\: R_{ab}^{\otimes n}\otimes_{\Z G} M, \hspace{1cm} 0\leq i \leq n-1.$$
Moreover,
$$\lim^i\: R_{ab}^{\otimes n}\otimes_{\Z G} M=0, \hspace{1cm} n+1\leq i$$
and there is a short exact sequence:
$$ 0\longrightarrow H_n(G,M) \longrightarrow \lim^n\: R_{ab}^{\otimes n}\otimes_{\Z G}M \longrightarrow (I^{\otimes n-1}\otimes M)\cdot I\longrightarrow 0. $$
\end{Theorem}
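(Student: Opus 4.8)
The plan is to reduce the whole statement to a single homological identity together with a standard dimension shift. The key device is a two-parameter family of representations,
$\Phi_{a,b}:=I^{\otimes a}\otimes R_{ab}^{\otimes b}\otimes_{\Z G}M$
(tensor over $\Z$, with the diagonal $G$-action on $I^{\otimes a}\otimes R_{ab}^{\otimes b}$, followed by coinvariants against $M$). I claim that for all $a,b,i\ge 0$ there is a natural isomorphism $\lim^i\Phi_{a,b}\cong{\sf Tor}^{\Z G}_{b-i}(I^{\otimes(a+b)},M)$, with the convention ${\sf Tor}_{<0}=0$. Taking $a=0$, $b=n$ yields $\lim^i R_{ab}^{\otimes n}\otimes_{\Z G}M\cong{\sf Tor}^{\Z G}_{n-i}(I^{\otimes n},M)$, and the theorem then drops out of the behaviour of ${\sf Tor}(I^{\otimes n},M)$ discussed at the end.

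I would prove the claim by induction on $b$. For $b=0$ the functor $\Phi_{a,0}=I^{\otimes a}\otimes_{\Z G}M$ is independent of the presentation, hence a constant representation on the contractible category ${\sf Pres}(G)$; so $\lim^0\Phi_{a,0}=I^{\otimes a}\otimes_{\Z G}M={\sf Tor}_0(I^{\otimes a},M)$ and $\lim^{i}\Phi_{a,0}=0$ for $i>0$, as required. For the inductive step I tensor the short exact sequence $0\to R_{ab}\to\f/\r\f\to I\to 0$ coming from Lemma~\ref{L3} with the free abelian group $I^{\otimes a}\otimes R_{ab}^{\otimes(b-1)}$ (which keeps it exact and splits it degreewise), obtaining a short exact sequence of $G$-modules $0\to X_{a,b}\to B\to X_{a+1,b-1}\to 0$, where $X_{a,b}=I^{\otimes a}\otimes R_{ab}^{\otimes b}$ and $B=I^{\otimes a}\otimes R_{ab}^{\otimes(b-1)}\otimes\f/\r\f$. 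The middle term $B$ is a free $\Z G$-module, and after applying $-\otimes_{\Z G}M$ it becomes $\lim$-acyclic: using $\f/\r\f\cong\Z G\otimes_{\Z[F]}\f$ one rewrites $B\otimes_{\Z G}M$ as a representation of the form $(-)\otimes_{\Z[F]}\f$, which is split monoadditive and so has all higher limits zero by Proposition~\ref{Proposition_I(F)tesor} (this is precisely the manipulation applied to the term $P$ in the preceding lemma).

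Applying $-\otimes_{\Z G}M$ and using flatness of $B$, the long exact sequence of ${\sf Tor}$ gives the four-term exact sequence $0\to{\sf Tor}_1^{\Z G}(X_{a+1,b-1},M)\to\Phi_{a,b}\to B\otimes_{\Z G}M\to\Phi_{a+1,b-1}\to 0$, together with the shifts ${\sf Tor}_j^{\Z G}(X_{a+1,b-1},M)\cong{\sf Tor}_{j-1}^{\Z G}(X_{a,b},M)$ for $j\ge 2$; iterating these shifts identifies the correction term ${\sf Tor}_1^{\Z G}(X_{a+1,b-1},M)\cong{\sf Tor}_b^{\Z G}(I^{\otimes(a+b)},M)$, which is a constant representation and hence limit-acyclic in positive degrees. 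Splitting the four-term sequence through $K:={\rm im}(\Phi_{a,b}\to B\otimes_{\Z G}M)$ and running the long exact sequences of higher limits (with $\lim^*(B\otimes_{\Z G}M)=0$ and $\lim^{>0}$ of the constant correction term vanishing) produces $\lim^0\Phi_{a,b}\cong{\sf Tor}_b^{\Z G}(I^{\otimes(a+b)},M)$ and the dimension shift $\lim^i\Phi_{a,b}\cong\lim^{i-1}\Phi_{a+1,b-1}$ for $i\ge 1$. Feeding in the inductive hypothesis for $b-1$ closes the induction and establishes the claim.

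Finally I would convert ${\sf Tor}$ back into homology. Tensoring $0\to I\to\Z G\to\Z\to 0$ with $I^{\otimes(n-1)}$ and iterating gives ${\sf Tor}_j^{\Z G}(I^{\otimes n},M)\cong H_{n+j}(G,M)$ for $j\ge 1$, and for $j=0$ a short exact sequence $0\to H_n(G,M)\to I^{\otimes n}\otimes_{\Z G}M\to(I^{\otimes n-1}\otimes M)\cdot I\to 0$, the last term being the image of the multiplication map. Substituting ${\sf Tor}_{n-i}^{\Z G}(I^{\otimes n},M)$ for $\lim^i R_{ab}^{\otimes n}\otimes_{\Z G}M$ then gives $H_{2n-i}(G,M)$ for $0\le i\le n-1$, zero for $i\ge n+1$, and exactly the asserted short exact sequence at $i=n$. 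The main obstacle is the inductive identity itself: one must hit on the correct two-parameter family $\Phi_{a,b}$, check that the middle terms become $\lim$-acyclic after coinvariants (the handedness and the rewriting through $\Z G\otimes_{\Z[F]}\f$ require care), and recognise that the ${\sf Tor}_1$-correction is presentation-independent—this last point is what makes the two long exact sequences interlock and lets the induction close.
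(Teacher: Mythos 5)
Your argument is correct, but the comparison here is somewhat one-sided: this paper does not prove the statement at all --- it is imported from \cite{Ivanov-Mikhailov} --- and the only proofs given in the text are for the untwisted analogue (Theorem \ref{theorem_homology}, via the truncated Gruenberg resolution and the spectral sequence of Corollary \ref{corollary_spectral}) and for the unlabelled lemma computing $\lim^i\: R_{ab}^{\otimes n}\otimes A$. Your proof is a genuine extension of the latter rather than of the former: the paper's lemma performs exactly your dimension shift along $0\to R_{ab}\to \f/\r\f\to I\to 0$, but with coefficients tensored over $\Z$, where no ${\sf Tor}$-corrections can arise; your two-parameter family $\Phi_{a,b}$ and the recognition that the correction ${\sf Tor}_1^{\Z G}(X_{a+1,b-1},M)\cong {\sf Tor}_b^{\Z G}(I^{\otimes(a+b)},M)$ is a \emph{constant} representation (hence $\lim$-acyclic in positive degrees) is precisely what lets the induction close over $\Z G$, and it is the step with real content. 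What each approach buys: the spectral-sequence route gives the range $0\leq i\leq n-1$ quickly but, as executed in the paper, yields neither the vanishing for $i\geq n+1$ nor the edge sequence at $i=n$, whereas your single identity $\lim^i\Phi_{a,b}\cong{\sf Tor}_{b-i}^{\Z G}(I^{\otimes(a+b)},M)$ delivers all three clauses uniformly using only long exact sequences. Two points do need the care you flag: the $\lim$-acyclicity of the middle term requires the tensor identity $U\otimes(\Z G\otimes_{\Z[F]}\f)\cong \Z G\otimes_{\Z[F]}(U\otimes\f)$ before taking coinvariants --- genuinely more than the rebracketing used for the term $P$ in the paper's lemma, where no coinvariants occur, though it does land in the split monoadditive situation of Proposition \ref{Proposition_I(F)tesor} --- and the iterated connecting isomorphisms identifying the ${\sf Tor}_1$-correction must be checked to be natural in the presentation, so the correction is constant as a representation and not merely objectwise. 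Both are routine, and with them your induction and the conversion ${\sf Tor}_j^{\Z G}(I^{\otimes n},M)\cong H_{n+j}(G,M)$ for $j\geq 1$, plus the $j=0$ sequence, give exactly the theorem.
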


\section{Hochschild and cyclic homology as higher limits}

In this section we state results that proved in \cite{Ivanov-Mikhailov} and \cite{Quillen2}. Let $k$ be a field and $A$ be an algebra. Consider the category ${\sf Pres}(A)$
of free presentations of an algebra $A$. The objects of ${\sf Pres}(A)$ are surjective homomorphisms $\pi:F\epi A$ where $F$ is a free algebra and morphisms $f:(\pi_1:F_1\epi A)\to (\pi_2:F_2\epi A)$ are homomorphisms $f:F_1\to F_2$ such that $\pi_1=\pi_2f$. The limits considered in this section are taken over this category.

\begin{Theorem}[\cite{Ivanov-Mikhailov}]\label{theorem_hochschild__}
For an algebra $A$ and an $A$-bimodule $M$, for $n\geq 1$,
there are natural isomorphisms
$$
H_{2n-i}(A,M)\simeq\lim^i\  (I^n/I^{n+1})\otimes_{A^e} M \ \text{ for } \  0\leq i<n.
$$
\end{Theorem}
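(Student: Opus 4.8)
The plan is to transport the proof of Theorem~\ref{theorem_homology} to the Hochschild setting, replacing the group ring by the enveloping algebra and ${\sf Pres}(G)$ by ${\sf Pres}(A)$. Since $H_*(A,M)={\sf Tor}^{A^e}_*(A,M)$ with $A^e=A\otimes A^{\rm op}$, I would first fix a presentation $\pi:F\epi A$ with relation ideal $I=\ker(\pi)$ (a two-sided ideal of the free algebra $F$) and build the algebra analogue of the truncated Gruenberg resolution: a resolution of the $A^e$-module $A$ of length $2n$ by free $A^e$-modules, assembled from the $I$-adic filtration of $F$ and the differential bimodule $J=\ker(F^e\xrightarrow{\mu}F)$, where $\mu$ is multiplication. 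The module $J$ is the Hochschild analogue of the augmentation ideal $\f=I(F)$: it is a two-sided ideal of $F^e$ and is \emph{free} over $F^e$ because $F$ is a free algebra, exactly as $\f$ is $\Z[F]$-free. Interleaving $J$ and the powers of $I$ as $\f$ and $\r$ are interleaved in the group Gruenberg resolution, the $2n$-th term should, after applying $-\otimes_{A^e}M$, yield $(I^n/I^{n+1})\otimes_{A^e}M$, while the odd terms are built out of $J$.

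Tensoring the truncated resolution with $M$ over $A^e$ produces a complex of representations of ${\sf Pres}(A)$ whose homology computes $H_i(A,M)$ for $0\le i\le 2n$; these cohomologies are constant representations and hence $\lim$-acyclic, because ${\sf Pres}(A)$ is contractible (it has coproducts, given by free products of algebras). Feeding this complex into the spectral sequence of higher limits (Corollary~\ref{corollary_spectral}) gives
\[
\lim^i\big(({\sf t}_{2n}P)_j\otimes_{A^e}M\big)\ \Longrightarrow\ H_{j-i}(A,M).
\]
The core of the argument is the first page. I would rewrite the $J$-built terms at the level of the varying algebra $F^e$, in the form $\mathcal M\otimes_{F^e}J\otimes_{F^e}\mathcal N$ with one factor flat, and invoke the algebra analogue of Proposition~\ref{Proposition_I(F)tesor} (via Proposition~\ref{Proposition_infty_mono_tensor} and Corollary~\ref{Corollary_infty_monoadditive}) to conclude that these are $\infty$-monoadditive and so have vanishing higher limits. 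Granting this, all odd terms drop out and the even terms collapse to isolated entries $(I^k/I^{k+1})\otimes_{A^e}M$ in their natural positions together with the single surviving column $\lim^i\big((I^n/I^{n+1})\otimes_{A^e}M\big)$; because $k$ is a field, every torsion-freeness hypothesis appearing in the group lemmas is automatic. By the same lacunary reason as in Theorem~\ref{theorem_homology}, the spectral sequence degenerates and yields the isomorphisms for $0\le i<n$.

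The main obstacle is that none of the technical Lemmas~\ref{L1}--\ref{L8} is literally available for algebras, so the entire toolkit must be re-established. Concretely, I must verify that ${\sf Pres}(A)$ is strongly connected and has coproducts $F_1*F_2$, so that the split-monoadditivity machinery applies, and then prove the free-product decomposition of the differential bimodule $J$ of $F_1*F_2$ into induced $F_1^e$- and $F_2^e$-modules --- the exact counterpart of the decomposition of $I(F_1*F_2)$ used in Proposition~\ref{Proposition_I(F)tesor}, which for free algebras should again follow from the universal property together with the $F^e$-freeness of $J$. I would also need the algebra analogue of Corollary~\ref{C1}, namely that the powers $I^n$ are free over $F^e$ and hence that $I^n/I^{n+1}$ is a well-behaved $A^e$-module. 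Once monoadditivity of the $J$-tensor representations and freeness of the $I^n$ are secured, the spectral-sequence bookkeeping is formally identical to the group case.
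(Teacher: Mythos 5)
First, a caveat: the paper itself contains no proof of Theorem~\ref{theorem_hochschild__} --- Section 5 explicitly only \emph{states} results proved in \cite{Ivanov-Mikhailov} and \cite{Quillen2} --- so your proposal can only be judged against the mathematics and the method of the cited papers. Your overall architecture is the right one and is essentially how those references proceed: a functorial resolution attached to each presentation $F\epi A$, the spectral sequence of Corollary~\ref{corollary_spectral}, vanishing of the $J$-built columns via the monoadditivity machinery, and the same lacunary collapse as in Theorem~\ref{theorem_homology}. The monoadditivity part of your plan is sound: ${\sf Pres}(A)$ is strongly connected with coproducts $F_1*F_2$, and $J(F_1*F_2)$ does decompose into bimodules induced from $J(F_1)$ and $J(F_2)$, since a derivation on a free product is exactly a pair of derivations; this gives the analogue of Proposition~\ref{Proposition_I(F)tesor}.

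The genuine gap sits exactly at the step you single out as the required input: your proposed analogue of Corollary~\ref{C1} --- that the powers $I^n$ are free over $F^e$ --- is false, and with it the ``interleaved'' Gruenberg-type resolution you describe does not exist as stated. Already for $F=k\langle x\rangle=k[x]$ and $I=(x)$ one has $I^n\cong F^e/(x\otimes 1-1\otimes x)$ as an $F^e$-module: it has nonzero annihilator, hence is not even projective. The obstruction is structural, not technical: in the group case $\f$ and $\r$ are ideals of the \emph{same} ring $\Z[F]$, so products $\f\r^n$, $\r^n$ make sense and are $\Z[F]$-free by Nielsen--Schreier, and exactness of the Gruenberg complex is a formal telescope; in the Hochschild case $J$ lives in $F^e$ while $I$ lives in $F$, the ideal calculus has no counterpart, and the naive substitutes (the kernels of $F^e\to A^e$ and $F^e\to A$ and their products) produce an exact complex whose terms are \emph{not} $A^e$-projective, hence do not compute ${\sf Tor}^{A^e}_*(A,M)$. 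What is true, and what a correct implementation (following \cite{Quillen2}, as in \cite{Ivanov-Mikhailov}) runs on, is \emph{one-sided} freeness: a free algebra is a free ideal ring (Cohn's theorem, the substitute for Nielsen--Schreier), so each $I^n$ is free as a left and as a right $F$-module; consequently $I^n/I^{n+1}=I^n\otimes_F A$ is free as a one-sided $A$-module, and the induced bimodules $A\otimes_F F^e\otimes_F I^n\otimes_F A\cong A\otimes (I^n/I^{n+1})$ and $A\otimes_F J\otimes_F I^n\otimes_F A$ \emph{are} projective over $A^e$. The resolution is then spliced from the four-term sequences
$$0\longrightarrow I^{n+1}/I^{n+2}\longrightarrow A\otimes_F J\otimes_F I^n\otimes_F A\longrightarrow A\otimes_F F^e\otimes_F I^n\otimes_F A\longrightarrow I^n/I^{n+1}\longrightarrow 0,$$
whose exactness --- above all injectivity on the left, the noncommutative conormal sequence --- is precisely where quasi-freeness of $F$ enters; it is a theorem of Quillen, not a formal consequence of inclusions of ideals. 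Two smaller points: your claim that the even columns ``collapse to isolated entries $(I^k/I^{k+1})\otimes_{A^e}M$'' conflates the terms of the complex with their higher limits; as in the group case one still needs the analogue of the (unlabelled) lemma preceding Theorem~\ref{theorem_homology} showing that the limits of the even terms are concentrated on the diagonal. Once the correct resolution and that lemma are in place, your spectral-sequence bookkeeping does go through as in Theorem~\ref{theorem_homology}.
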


For an $F$-bimodule $M$ we set
$$M_\natural=\frac{M}{[M,F]}=H_0(A,M),$$
where $[M,F]$ is the vector space generated by elements $mf-fm.$

\begin{Corollary}
For an algebra $A$ and $n\geq 1$,
there are natural isomorphisms
$$
HH_{2n-i}(A)\simeq\lim^i\  (I^n/I^{n+1})_\natural \ \text{ for } \  0\leq i<n.
$$
\end{Corollary}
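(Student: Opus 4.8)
The plan is to deduce the Corollary as the special case $M=A$ of Theorem~\ref{theorem_hochschild__}, after recognising the resulting coefficient functor. Recall that Hochschild homology of an algebra is, by definition, Hochschild homology with coefficients in the algebra viewed as a bimodule over itself, so that $HH_{2n-i}(A)=H_{2n-i}(A,A)$. Substituting the $A$-bimodule $M=A$ into Theorem~\ref{theorem_hochschild__} therefore immediately yields, for $0\leq i<n$, natural isomorphisms
$$
HH_{2n-i}(A)=H_{2n-i}(A,A)\simeq \lim^i\ (I^n/I^{n+1})\otimes_{A^e} A .
$$
It thus remains only to identify the functor $\pi\mapsto (I^n/I^{n+1})\otimes_{A^e}A$ with $\pi\mapsto (I^n/I^{n+1})_\natural$.

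For this I would invoke the general fact, already recorded in the definition preceding the statement, that for any $A$-bimodule $N$ there is an isomorphism
$$
N\otimes_{A^e}A\ \cong\ H_0(A,N)\ =\ N_\natural ,
$$
namely the standard description of degree-zero Hochschild homology as the quotient of $N$ by the commutator subspace $[N,A]$. I would apply this with $N=I^n/I^{n+1}$. Note that $I^n/I^{n+1}$ is genuinely an $A$-bimodule: since $I\cdot I^n=I^{n+1}$ and $I^n\cdot I=I^{n+1}$, the kernel ideal $I$ acts trivially on $I^n/I^{n+1}$, so the $F$-bimodule structure descends to an $A=F/I$-bimodule structure, making $\otimes_{A^e}A$ meaningful. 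This produces, at each presentation $\pi\in{\sf Pres}(A)$, an isomorphism $(I^n/I^{n+1})\otimes_{A^e}A\cong (I^n/I^{n+1})_\natural$.

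The only point requiring care — and the sole content beyond bookkeeping — is that this identification be natural in the presentation, i.e.\ that it assemble into an isomorphism of functors ${\sf Pres}(A)\to {\sf Mod}(k)$ rather than a mere family of objectwise isomorphisms; this is exactly what is needed to apply $\lim^i$ to both sides and conclude that the higher limits agree. This naturality is, however, immediate: the construction $N\mapsto N\otimes_{A^e}A=N/[N,A]=N_\natural$ is functorial in $N$, and $\pi\mapsto I^n/I^{n+1}$ is a functor to bimodules, so the comparison map is computed by the same universal recipe at every object and is natural by construction. Combining this natural isomorphism of functors with the displayed consequence of Theorem~\ref{theorem_hochschild__} gives the asserted natural isomorphisms $HH_{2n-i}(A)\simeq\lim^i (I^n/I^{n+1})_\natural$ for $0\leq i<n$.
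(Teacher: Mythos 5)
Your proposal is correct and is essentially the paper's own (implicit) argument: the Corollary is presented as an immediate consequence of Theorem \ref{theorem_hochschild__}, obtained exactly as you do by specializing to $M=A$ and identifying $(I^n/I^{n+1})\otimes_{A^e}A\cong (I^n/I^{n+1})_\natural$, an identification the paper builds into its definition of $(-)_\natural$ as $H_0(A,-)$. Your additional care with the descent of the $F$-bimodule structure on $I^n/I^{n+1}$ to an $A$-bimodule structure, and with naturality in the presentation, supplies precisely the bookkeeping the paper leaves unstated.
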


\begin{Theorem}[Quillen {\cite{Quillen2}}]
Let $A$ be an  algebra over a field $k$ of characteristic
$0$. Then there are isomorphisms
\begin{equation}HC_{2n}(A) \cong \lim \ (F/I^{n+1})_\natural,
\end{equation}
\end{Theorem}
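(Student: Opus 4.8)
The plan is to reprove Quillen's isomorphism inside the present formalism, in the same spirit as Theorems \ref{theorem_homology} and \ref{theorem_hochschild__}. Since ${\sf Pres}(A)$ has coproducts (free products of free algebras), it is contractible and strongly connected, so $\lim=\lim^0$ coincides with the functor of invariant elements; thus it suffices to single out the ``presentation independent'' elements of $(F/I^{n+1})_\natural$ and to identify them with $HC_{2n}(A)$. The essential subtlety is that, for a fixed free presentation, $(F/I^{n+1})_\natural$ is not $HC_{2n}(A)$ itself but only a finite $I$-adic approximation to it, and it is the limit over ${\sf Pres}(A)$ that plays the role of the $I$-adic completion in Quillen's computation.

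First I would recall Quillen's description (valid in characteristic $0$) of the cyclic homology of $A=F/I$ through the $X$-complex of the free algebra $F$,
$$X(F):\quad \Omega^1F_\natural \xrightarrow{\ b\ } F, \qquad b(a\,db)=[a,b],$$
equipped with its $I$-adic (Hodge) filtration. For each presentation the degree-zero homology of the truncation $X(F)/\mathrm{fil}^{\,n+1}$ is exactly $\mathrm{coker}\big(b\colon (\Omega^1F_\natural)^{(n)}\to F/I^{n+1}\big)=(F/I^{n+1})_\natural$, so $(F/I^{n+1})_\natural$ is a functor on ${\sf Pres}(A)$ recording this truncated complex. Quillen's theorem says that the completed complex computes $HC_*(A)$; the task is therefore to show that applying $\lim$ over ${\sf Pres}(A)$ to the degree-zero term has the same effect as completing, and produces precisely $HC_{2n}(A)$.

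To carry this out I would filter $(F/I^{n+1})_\natural$ by the images of the powers $I^k$ and feed the filtration into the spectral sequence of higher limits (Corollary \ref{corollary_spectral}). The associated graded pieces are, up to the cyclic-word identification, built from $(I^k/I^{k+1})_\natural$ and from the free $F$-bimodule of noncommutative one-forms $\Omega^1F$, which plays exactly the role that $\f=I(F)$ plays in the group case. Two inputs then control the spectral sequence: on the one hand the algebra analogue of Proposition \ref{Proposition_I(F)tesor}, combined with Proposition \ref{Proposition_infty_mono_tensor} and Corollary \ref{Corollary_infty_monoadditive}, shows that the differential-form functors are $\infty$-monoadditive and hence have vanishing higher limits in all degrees; on the other hand Theorem \ref{theorem_hochschild__} supplies $\lim^i (I^k/I^{k+1})_\natural = HH_{2k-i}(A)$, in particular $\lim (I^k/I^{k+1})_\natural = HH_{2k}(A)$. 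Running the induction on $n$ through the short exact sequences of functors coming from $F/I^{n+1}\epi F/I^{n}$, I would identify the transition maps and connecting homomorphisms with Connes' operators $S$ and $B$, so that the resulting long exact sequences become Connes' $SBI$-sequence and assemble the Hochschild pieces into $HC_{2n}(A)$.

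The main obstacle is twofold. First, one must verify that the limit over ${\sf Pres}(A)$ genuinely reproduces the $I$-adic completion rather than merely the truncated value, i.e. that the correction higher limits $\lim^{>0}$ of the graded pieces do not contribute in degree zero; this is exactly where the vanishing of \emph{all} higher limits of the $\Omega^1F$-tensor functors, and not only of their invariants, is needed for a lacunary collapse. Second, and most delicately, the identification of the filtration differentials with Connes' $B$-operator, together with the splitting off of the odd (cycle) contributions, relies on the characteristic-zero $\lambda$-decomposition; characteristic $0$ is therefore indispensable both here and in Quillen's underlying $X$-complex computation, and without it the clean statement $HC_{2n}(A)=\lim (F/I^{n+1})_\natural$ fails.
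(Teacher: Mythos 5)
The first thing to note is that the paper contains no proof of this statement to compare against: Section~5 opens by saying its results are quoted from \cite{Ivanov-Mikhailov} and \cite{Quillen2}, so the theorem is cited, not proved. Your proposal therefore stands or falls on its own, and it has a genuine conceptual gap at its center. Your guiding mechanism --- that $\lim$ over ${\sf Pres}(A)$ ``plays the role of the $I$-adic completion'' in Quillen's computation --- is wrong, because completion does not compute $HC_{2n}(A)$ in the first place: the $I$-adically completed $X$-complex of a free algebra computes the \emph{periodic} theory $HP_*(A)$ (Cuntz--Quillen), which differs from $HC_{2n}(A)$ in general (e.g.\ $HP_0(k[t]/(t^2))=k$ while $HC_2(k[t]/(t^2))=k^2$). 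What the theorem actually rests on is a finite-stage statement, and for that the structure you suppressed is essential: $X(F)$ is a $\Z/2$-graded supercomplex with \emph{two} differentials, $b\colon\Omega^1F_\natural\to F$ and $\natural d\colon F\to\Omega^1F_\natural$, whereas you wrote only $b$. With $\natural d$ present, $H_{\rm even}$ of the Hodge-truncated supercomplex is not $(F/I^{n+1})_\natural$ but the proper subgroup $\ker\bigl(\natural d\colon (F/I^{n+1})_\natural\to \Omega^1F_\natural/\mathrm{(Hodge\ part)}\bigr)$, and it is this kernel that is isomorphic to $HC_{2n}(A)$. A sanity check: for $A=k$, $F=k\langle x\rangle$, one has $(F/I^{n+1})_\natural=k[x]/(x^{n+1})$ of dimension $n+1$, while $HC_{2n}(k)=k$; it is invariance under presentation morphisms (compare $x\mapsto x$ with $x\mapsto 0$), equivalently membership in $\ker(\natural d)$ --- not any completion --- that cuts this down to $k$. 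The bridge the proof actually needs, and which your sketch never builds, is: (i) by naturality of the finite-stage isomorphism, $\ker(\natural d)$ is the constant subfunctor $HC_{2n}(A)$ of $(F/I^{n+1})_\natural$; (ii) the image of $\natural d$, being a subfunctor of a functor built from $\Omega^1F$, has vanishing $\lim^0$ --- here your monoadditivity input, the algebra analogue of Proposition~\ref{Proposition_I(F)tesor}, is indeed the right tool; (iii) left exactness of $\lim$ then identifies $\lim\,(F/I^{n+1})_\natural$ with $\lim\ker(\natural d)=HC_{2n}(A)$.

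The spectral-sequence half of your plan also does not go through as stated. Corollary~\ref{corollary_spectral} takes as input a \emph{complex} of representations whose cohomologies are $\lim$-acyclic; a filtration of the single representation $(F/I^{n+1})_\natural$ is not such a datum, and manufacturing the right complex is precisely where the work lies (it is what the truncated Gruenberg resolution accomplishes in the proof of Theorem~\ref{theorem_homology}). Moreover the graded pieces of the $I$-adic filtration on $(F/I^{n+1})_\natural$ are $I^k/\bigl(I^{k+1}+I^k\cap([F,F]+I^{n+1})\bigr)$, which is in general a proper quotient of $(I^k/I^{k+1})_\natural=I^k/(I^{k+1}+[F,I^k])$, so Theorem~\ref{theorem_hochschild__} does not apply to them directly; and even where it would apply, it controls $\lim^i$ only in the range $0\le i<k$ (in the group case the boundary term $\lim^n$ genuinely contains more than $H_n$ --- see the short exact sequence in the last theorem of Section~4), so the claimed lacunary collapse in the remaining range is unsupported. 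Finally, the identification of the connecting maps with Connes' $S$ and $B$ is asserted rather than argued, and that identification is the real content of Quillen's theorem, not a routine verification.
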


\begin{Theorem}[\cite{Ivanov-Mikhailov}]
Let $A$ be an augmented algebra over a field $k$ of characteristic
$0$. Then there are isomorphisms
\begin{equation}HC_{2n-1}(A) \cong \lim^1 \ (F/I^{n+1})_\natural.
\end{equation}
\end{Theorem}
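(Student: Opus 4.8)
The plan is to realise $HC_{2n-1}(A)$ as the next anti-diagonal, just after Quillen's even-degree term, in a single spectral sequence of higher limits, in exact parallel with the way the two formulas of Theorem \ref{theorem_homology} for group homology are read off one spectral sequence. Both $HC_{2n}(A)$ and $HC_{2n-1}(A)$ should appear as $\lim^0$ and $\lim^1$ of the one functor $\Phi_n:=(F/I^{n+1})_\natural$ on ${\sf Pres}(A)$, the even term being exactly the preceding theorem of Quillen and the odd term the quantity I must pin down.

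First I would produce, for each presentation $\pi:F\epi A$ with $R=\ker\pi$, Quillen's complex of $k$-vector spaces built from the $R$-adic tower of the free algebra $F$ whose homology computes the cyclic homology of $A$ in the relevant range. This is where characteristic zero is indispensable: the identification of the relevant piece of the $X$-complex (noncommutative de Rham forms $\Omega_\natural$ played against $(F/R^{n+1})_\natural$ via $b$ and Connes' operator $B=\natural d$) with cyclic homology rests on the Hodge decomposition. The essential point is that this construction is natural in $\pi$, so it assembles into a complex $\mathcal E_\bullet$ of representations of ${\sf Pres}(A)$, one of whose terms is $\Phi_n$ and whose remaining terms are ``forms'' functors of the type $\mathcal M\otimes_{\O}\f\otimes_{\O}\mathcal N$, with $\O$ the enveloping-algebra functor $\pi\mapsto F^{e}$.

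Because $\Omega^1_F\cong F\otimes V\otimes F$ is a free, hence flat, $F^{e}$-bimodule, these forms functors are split monoadditive and therefore $\infty$-monoadditive, so by (the algebra analogue of) Proposition \ref{Proposition_I(F)tesor} and Corollary \ref{Corollary_infty_monoadditive} all their higher limits vanish. The objectwise homology of $\mathcal E_\bullet$ is $HC_*(A)$, which does not depend on the presentation and is hence a constant, and so a $\lim$-acyclic, functor. I may thus feed $\mathcal E_\bullet$ into the homological analogue of the spectral sequence of Corollary \ref{corollary_spectral}, namely $E^1_{i,j}=\lim^i\mathcal E_j\Rightarrow HC_{j-i}(A)$. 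Every column except the one carrying $\Phi_n$ is concentrated in the row $i=0$, so by the same lacunary reasoning as in Theorem \ref{theorem_homology} the spectral sequence degenerates along the $\Phi_n$-column; reading off the two entries $\lim^0\Phi_n$ and $\lim^1\Phi_n$ on the anti-diagonals $2n$ and $2n-1$ yields $HC_{2n}(A)\cong\lim^0\Phi_n$ (recovering Quillen) and the desired $HC_{2n-1}(A)\cong\lim^1\Phi_n$, together with $\lim^{i}\Phi_n=0$ for $i\ge 2$.

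The main obstacle is the very first step: writing down the natural, presentation-indexed complex $\mathcal E_\bullet$ computing $HC_*(A)$ with all auxiliary terms of the monoadditive forms type, and checking that the lacunary pattern places $\Phi_n$ so that only $\lim^0$ and $\lim^1$ survive (equivalently, that $\lim^{i}\Phi_n=0$ for $i\ge2$). This is genuinely Quillen's computation imported into the functorial setting, and it is the content one must borrow from \cite{Quillen2}; the homological algebra of higher limits that follows is then formal. The one remaining subtlety is to match the spectral-sequence edge maps with Connes' operators $B$ and $S$, so that the isomorphism $HC_{2n-1}(A)\cong\lim^1\Phi_n$ is natural in $A$ rather than merely an abstract identification of groups.
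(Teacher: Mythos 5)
The first thing to note is that the paper contains no proof of this statement: Section 5 opens by saying that its results are quoted from \cite{Ivanov-Mikhailov} and \cite{Quillen2}, and this theorem is stated without argument. So the comparison can only be made against the proof in the cited reference, and measured against that your plan is structurally the right one: borrow from \cite{Quillen2} a complex of representations of ${\sf Pres}(A)$, natural in $\pi\colon F\epi A$, having $(F/I^{n+1})_\natural$ as one term and a quotient of the noncommutative $1$-forms $\Omega^1(F)_\natural$ as the auxiliary term, with homology $HC_{2n}(A)$ and $HC_{2n-1}(A)$ in characteristic $0$; kill the auxiliary term in all limit degrees using freeness of $\Omega^1(F)\cong F\otimes V\otimes F$ over $F^e$ via the algebra analogue of Proposition \ref{Proposition_I(F)tesor} and Corollary \ref{Corollary_infty_monoadditive}; then read off $\lim^0=HC_{2n}(A)$, $\lim^1=HC_{2n-1}(A)$, $\lim^{\geq 2}=0$ from Corollary \ref{corollary_spectral} or, more simply, from the long exact sequences of higher limits applied to the two short exact sequences into which Quillen's four-term sequence splits. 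One mechanical correction: the auxiliary term vanishes in \emph{all} limit degrees, it is not merely ``concentrated in the row $i=0$'' as you write at one point; your weaker phrasing would leave possible $d_1$-differentials in row zero and force a genuine lacunary analysis as in Theorem \ref{theorem_homology}, whereas the relevant Quillen complex has only two terms and the degeneration is immediate.

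There are, however, two real gaps. First, everything of mathematical substance sits in your black box: the existence of a presentation-natural complex with exactly these terms, exactly this homology, and a $\lim$-acyclic auxiliary term \emph{is} the input one must extract from \cite{Quillen2} (a truncation of the $X$-complex $F_\natural\rightleftarrows\Omega^1(F)_\natural$ by the $I$-adic filtration), so as written the proposal is a correct reduction of the theorem to an unverified statement rather than a proof. Second, and more concretely: your argument never uses the hypothesis that $A$ is \emph{augmented}, yet Quillen's even-degree theorem is stated for arbitrary algebras while the odd-degree one is asserted only for augmented ones, so the hypothesis must be consumed somewhere in any correct proof. The likely culprit is functoriality of the reduced objects: a morphism in ${\sf Pres}(A)$ is just a unital algebra map over $A$ and need not respect the canonical tensor-algebra augmentations of the free algebras, so augmentation ideals and reduced forms are natural functors on ${\sf Pres}(A)$ only when $A$ itself carries an augmentation (which then augments every $F$ compatibly via $F\epi A\to k$). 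Note, for instance, that $F_\natural$ contains the constant functor $k\cdot 1$, so unreduced objects are \emph{not} $\lim$-acyclic, and splitting such constant summands off naturally — which the $\lim^1$ computation, unlike the invariants computation of the even case, genuinely requires — is exactly where augmentation enters. A complete write-up must locate this step; a plan in which the hypothesis is never invoked cannot yet be one.
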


\section{fr-codes of functors ${\sf Gr}\to {\sf Ab}$}

Denote by ${\sf Pres}$ the category whose objects are presentations of a group $F\overset{\pi}\epi G,$ and whose morphisms are commutative squares
$$\xymatrix{
F_1\ar[rr]\ar[d]^{\pi_1} & & F_2\ar[d]^{\pi_2}\\
G_1\ar[rr] & & G_2.
}$$
Consider the forgetful functor
$${\sf Pres}\longrightarrow {\sf Gr},$$
that sends the presentation $F\epi G$ to $G.$ The fibre of this
functor over a group $G$ is the category ${\sf Pres}(G).$ The
following ideals of $\Z [F]$
$$\f=I(F)=(F-1)\Z [F], \hspace{1cm} \r=(R-1)\Z [F]$$
depend on $F$ and
can be considered as functors $\f,\r:{\sf Pres}\to {\sf Ab}$. Using the operations of sum, product and intersection of ideals we can get a lot of different functors. For example
$$ \f\r^n+\r^n\f, \r^{n+1}+\f\r^n\f : {\sf Pres}\longrightarrow {\sf Ab}.$$
Denote by ${\sf ML}(\f,\r)$ the set of all functorial ideals of
$\Z [F]$ that can be received form $\r,\f$ using the operations of
sum, product and intersection, including $\Z[F]$ itself. Then for
any ${\bf x}\in {\sf ML}(\f,\r)$ and $i\geq 0$ we get the functor
$${}^i[{\bf x}] : {\sf Gr}\longrightarrow {\sf Ab}$$
given by
$${}^i[{\bf x}](G)= \lim^i_{{\sf Pres}(G)}\ {\bf x}.$$
Moreover, if ${\bf x}\subseteq {\bf y}$ then we get a morphism
${}^i[{\bf x}] \longrightarrow {}^i[{\bf y}].$ We will prove that ${}^0[{\bf x}]=0$ for any ${\bf x},$ and the most important case is $i=1.$ So we set
${}^1[{\bf x}]=[{\bf x}].$
\begin{Definition}
An {\bf fr-code} of a functor $\F: {\sf Gr}\to {\sf Ab}$ is an
isomorphism
$$\F\cong {}^i[{\bf x}].$$
  A {\bf polynomial ${\bf fr}$-code} is an ${\bf fr}$-code $\F\cong {}^i[{\bf x}],$ such that  ${\bf x}={\sum}_j\ \a_{j,1}\a_{j,2}{}\dots {}\a_{j,m_j},$
where $\a_{j,k} \in \{\f,\r\}.$
\end{Definition}

\begin{Example}
Consider the functor $G\mapsto I(G).$ There is a short exact
sequence $\r\mono \f \epi I(G).$ Using the long exact sequence of
higher limits and the fact that $\lim^i \f=0$, we get the equality
$\lim^i\: \r=\lim^{i-1}\: I(G).$ Thus we obtain a polynomial ${\bf
fr}$-code for $I(G):$
$$I(G)\cong [\r].$$
\end{Example}

The main point of this section is the fact that a lot of useful
functors ${\sf Gr}\to {\sf Ab}$ and transformations between them
have polynomial ${\bf fr}$-codes. In particular, we will prove the
following polynomial ${\bf fr}$-codes:

\begin{align*}
& I(G)/I(G)^n=[\r+\f^n],\\
& G_{ab}=[\r+\f^2],\\
& I(G)^{\otimes_{\Z [F]} n}=[\sum_{i=1}^n \f^{i-1}\r\f^{n-i}],\\
& G_{ab}^{\otimes n}=[\sum_{i=1}^n \f^{i-1}\r\f^{n-i}+\f^{n+1}],\\
& H_{2n}(G)=[\f\r^n+\r^n\f],\\
& H_{2n-1}(G)=[\r^n+\f\r^{n-1}\f],\\
& L_{n-i} \bigotimes^n G_{ab}={}^i[\r^n+\f^{n+1}],\\
& I(G)^{\otimes n}={}^n[\r^n].
\end{align*}
There are also non-polynomial codes for some functors:
$$I(G)^n=[\r\cap \f^n].$$

\subsection{fr-codes for homology of groups}
\begin{Proposition}
There are polynomial $\bf {fr}$-codes for functors of homology
$H_*:{\sf Gr}\to {\sf Ab}$ except
$H_2$:$$H_{2n+2}(G)\cong[\f\r^{n+1}+\r^{n+1}\f], \hspace{1cm}
H_{2n-1}(G)\cong[\r^{n}+\f\r^{n-1}\f]$$ for  $n\geq 1.$
\end{Proposition}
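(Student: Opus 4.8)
The plan is to derive both families of codes from Theorem \ref{theorem_homology} by a single degree-shift device. In each case the quotient representation whose $\lim^0$ computes the homology group sits at the right-hand end of a short exact sequence of functorial ideals $0\to\mathbf{x}\to\mathbf{y}\to\mathbf{y}/\mathbf{x}\to0$ whose \emph{ambient} ideal $\mathbf{y}$ is chosen so that $\lim^0\mathbf{y}=\lim^1\mathbf{y}=0$. The long exact sequence of higher limits then collapses, and its connecting homomorphism identifies $\lim^0(\mathbf{y}/\mathbf{x})$ with $\lim^1\mathbf{x}=[\mathbf{x}]$. Thus the whole proof reduces to producing the right ambient ideals and quoting the known vanishing results.

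For the even codes I would set $m=n+1\geq 2$ and take $A=\Z$, $i=0$ in the first formula of Theorem \ref{theorem_homology}, giving $H_{2m}(G)=\lim^0\,\r^m/(\f\r^m+\r^m\f)$. Since $\r^m$ is a two-sided ideal we have $\f\r^m+\r^m\f\subseteq\r^m$, so I consider
\[
0\longrightarrow \f\r^m+\r^m\f\longrightarrow \r^m\longrightarrow \r^m/(\f\r^m+\r^m\f)\longrightarrow 0.
\]
The key input is the computation established just before Theorem \ref{theorem_homology}, namely that $\lim^i\,\r^m$ equals $I(G)^{\otimes m}$ for $i=m$ and vanishes otherwise. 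For $m\geq 2$ this forces $\lim^0\,\r^m=\lim^1\,\r^m=0$, whence the connecting map gives $\lim^0\,\r^m/(\f\r^m+\r^m\f)\cong\lim^1(\f\r^m+\r^m\f)$, i.e.\ $H_{2n+2}(G)\cong[\f\r^{n+1}+\r^{n+1}\f]$.

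For the odd codes I would argue in the dual way. Replacing $n$ by $n-1$ and setting $A=\Z$, $i=0$ in the second formula of Theorem \ref{theorem_homology} yields $H_{2n-1}(G)=\lim^0\,\f\r^{n-1}/(\r^n+\f\r^{n-1}\f)$. Because $\r\subseteq\f$ one has $\r^n\subseteq\f\r^{n-1}$ and $\f\r^{n-1}\f\subseteq\f\r^{n-1}$, so $\r^n+\f\r^{n-1}\f\subseteq\f\r^{n-1}$ and there is a short exact sequence
\[
0\longrightarrow \r^n+\f\r^{n-1}\f\longrightarrow \f\r^{n-1}\longrightarrow \f\r^{n-1}/(\r^n+\f\r^{n-1}\f)\longrightarrow 0.
\]
Here the ambient ideal satisfies $\lim^i\,\f\r^{n-1}=0$ for \emph{all} $i$: writing it as $\Z[F]\,\f\,\r^{n-1}$ this is Lemma \ref{L7} (with $\Z[F]/\Z[F]=0$ torsion free), or, equivalently, $\f\r^{n-1}\cong\f\otimes_{\Z[F]}\r^{n-1}$ by Lemma \ref{L5} together with Proposition \ref{Proposition_I(F)tesor}. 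The long exact sequence then gives $H_{2n-1}(G)\cong\lim^1(\r^n+\f\r^{n-1}\f)=[\r^n+\f\r^{n-1}\f]$ for every $n\geq 1$.

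The only genuine obstacle is the bookkeeping of the vanishing ranges, and it is precisely this that accounts for the excluded case. In the even argument the step $\lim^1\,\r^m=0$ fails for $m=1$, where instead $\lim^1\,\r=I(G)\neq 0$; the long exact sequence then degenerates only to $0\to H_2(G)\to[\f\r+\r\f]\to I(G)\to\cdots$, so $[\f\r+\r\f]$ is a nontrivial extension and $H_2$ is not captured by this recipe. Everywhere else the two ambient ideals have the required vanishing, the ideals involved are manifestly built from $\f$ and $\r$ by sums and products, and the resulting ${\bf fr}$-codes are therefore polynomial.
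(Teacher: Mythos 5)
Your proposal is correct and is essentially the paper's own proof: both apply Theorem \ref{theorem_homology} with $A=\Z$, $i=0$, and then run the long exact sequence of higher limits on the inclusions of $\f\r^{n+1}+\r^{n+1}\f$ into $\r^{n+1}$ and of $\r^{n}+\f\r^{n-1}\f$ into $\f\r^{n-1}$, using exactly the same vanishing inputs ($\lim^{0,1}\r^{n+1}=0$ for $n\geq 1$ and $\lim^{*}\f\r^{n-1}=0$). Your closing remark pinpointing where the argument breaks for $H_2$ (namely $\lim^1\r=I(G)\neq 0$) is a nice addition that the paper leaves implicit.
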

\begin{proof}
Theorem \ref{theorem_homology} implies $H_{2n+2}(G)\cong \lim^0\:
\r^{n+1}/(\f\r^{n+1}+\r^{n+1}\f)$ and $H_{2n+1}(G)=\lim^0\:
\f\r^n/(\r^{n+1}+\f\r^n\f).$ Applying the long exact sequence of
limits and using $\lim^{0,1}\: \r^{n+1}=0$ and $\lim^*\: \f\r^n=0$
we get the claimed isomorphisms.
\end{proof}

\subsection{fr-codes for $L_i\bigotimes^n G_{ab}$}

Recall that if $\F:{\sf Ab}\to {\sf Ab}$ and $A$ is abelian
simplicial group, then  Dold-Puppe derived functors $L_i\:\F:{\sf
Ab}\to {\sf Ab}$ can be defined using a free abelian simplicial
resolution $X_\bullet$ of $A$ as follows
$$L_i\F(A)=\pi_i(\F(X_\bullet)).$$
Here we consider the derived functors of the functor the functor of tensor power $\bigotimes^n:{\sf Ab}\to {\sf Ab}$ that sends $A$ to $A^{\otimes n}.$

Let $A_1,\dots,A_n$ are abelian groups. Consider a free resolution $P_i=(0\to P_{i,1}\to P_{i,0}\to 0 )$ of $A_i.$ Then ${\sf Tor}_i(A_1,\dots,A_n)$ is defined as follows
$${\sf Tor}_i(A_1,\dots,A_n)=H_i(P_1\otimes \dots \otimes P_n).$$ Eilenberg-Zilber Lemma implies that
$$L_i\:{\bigotimes}^n A={\sf Tor}_i(A,\dots,A).$$

\begin{Proposition} The functors $L_{i} \bigotimes^{\otimes n} G_{ab} $ has the following ${\bf fr}$-codes
$$L_{n-i}{\bigotimes}^nG_{ab}\cong {}^i[\r^n+\f^{n+1}]. $$
\end{Proposition}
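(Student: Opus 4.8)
The plan is to compute the higher limits of the functorial ideal $\r^n+\f^{n+1}$ by first identifying them with the higher limits of a single functor, using a functorial free resolution of $G_{ab}$ together with the spectral sequence of higher limits, and only afterwards to match the answer with the Dold--Puppe derived functors.

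First I would produce a functorial free resolution of the constant functor $G_{ab}$ over ${\sf Pres}(G)$. Since $F$ is free, $F_{ab}\cong \f/\f^2$ and $G_{ab}\cong I(G)/I(G)^2\cong \f/(\r+\f^2)$; setting $\bar R:=(\r+\f^2)/\f^2\subseteq \f/\f^2=F_{ab}$ gives a short exact sequence of functors $0\to \bar R\to F_{ab}\to G_{ab}\to 0$ in which $\bar R$ and $F_{ab}$ are free abelian groups for every $\pi$ (a subgroup of a free abelian group is free abelian). Hence $P:=[\bar R\to F_{ab}]$ is a functorial free resolution of $G_{ab}$ and, by the Eilenberg--Zilber description recalled above, $L_j\bigotimes^n G_{ab}=H_j(P^{\otimes n})$. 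The complex $\mathcal C_\bullet:=P^{\otimes n}$ is concentrated in degrees $0\le p\le n$ with $\mathcal C_n=\bar R^{\otimes n}$, while every summand of $\mathcal C_p$ for $p<n$ contains at least one tensor factor $F_{ab}$.

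Next I would apply the spectral sequence of higher limits. Reindex $\mathcal C_\bullet$ as a cochain complex $\mathcal D^\bullet$ with $\mathcal D^p=\mathcal C_{n-p}$, so that $H^q(\mathcal D^\bullet)=L_{n-q}\bigotimes^n G_{ab}$. These cohomologies depend only on $G_{ab}$, so they are constant functors on ${\sf Pres}(G)$; as ${\sf Pres}(G)$ is contractible, constant functors are $\lim$-acyclic, and Corollary \ref{corollary_spectral} yields $E_1^{pq}=\lim^q\mathcal D^p\Rightarrow \lim H^{p+q}(\mathcal D^\bullet)=L_{n-(p+q)}\bigotimes^n G_{ab}$. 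For $p>0$ the representation $\mathcal D^p=\mathcal C_{n-p}$ carries a factor $F_{ab}$, hence $\lim^q\mathcal D^p=0$ by Corollary \ref{Lemma_F_ab}; only the column $p=0$ with $E_1^{0,q}=\lim^q\bar R^{\otimes n}$ survives. The sequence degenerates and delivers $\lim^i\bar R^{\otimes n}\cong L_{n-i}\bigotimes^n G_{ab}$.

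Finally I would pass from $\bar R^{\otimes n}$ to $\r^n+\f^{n+1}$. Since for a free group ${\sf gr}_\f\,\Z[F]$ is the tensor algebra on $F_{ab}$, multiplication gives $\f^n/\f^{n+1}\cong F_{ab}^{\otimes n}$, under which the image of the inclusion $\bar R^{\otimes n}\hookrightarrow F_{ab}^{\otimes n}$ is exactly $(\r^n+\f^{n+1})/\f^{n+1}$; this inclusion is a monomorphism because tensoring the inclusion of free abelian groups $\bar R\hookrightarrow F_{ab}$ with flat modules preserves monomorphisms, so $\bar R^{\otimes n}\cong (\r^n+\f^{n+1})/\f^{n+1}$ naturally in $\pi$. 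As $\lim^i\f^{n+1}=0$ (Lemma \ref{L7} with ${\bf x}=\f^n$, or Proposition \ref{Proposition_I(F)tesor}), the short exact sequence $0\to \f^{n+1}\to \r^n+\f^{n+1}\to (\r^n+\f^{n+1})/\f^{n+1}\to 0$ gives $\lim^i(\r^n+\f^{n+1})\cong \lim^i\bar R^{\otimes n}\cong L_{n-i}\bigotimes^n G_{ab}$, as claimed. The main obstacle is precisely this last paragraph: one must resolve $G_{ab}$ by $\bar R=\mathrm{im}(R_{ab}\to F_{ab})=(\r+\f^2)/\f^2$ rather than by $R_{ab}$ itself, and it is this replacement that spreads the higher limits over the whole range $0\le i\le n$ (whereas $\lim^i R_{ab}^{\otimes n}$ is concentrated in degree $n$); checking that the identification $\bar R^{\otimes n}\cong(\r^n+\f^{n+1})/\f^{n+1}$ and the resolution are natural in $\pi$ is the part that needs careful bookkeeping.
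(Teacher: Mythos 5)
Your proof is correct and is essentially the paper's own argument: the same functorial resolution $\bar R\to F_{ab}$ with $\bar R=(\r+\f^2)/\f^2$, the same spectral sequence of higher limits (Corollary \ref{corollary_spectral}, with $\lim$-acyclicity of the constant cohomologies), the same vanishing of all terms containing a factor $F_{ab}$ via Corollary \ref{Lemma_F_ab}, and the same identification $\bar R^{\otimes n}\cong(\r^n+\f^{n+1})/\f^{n+1}$. The only difference is organizational: the paper first embeds $(\bar R\to F_{ab})^{\otimes n}\mono(F_{ab}\to F_{ab})^{\otimes n}$ and runs the spectral sequence on the cokernel $C_\bullet$, whose top term is the quotient $C_n=\f^n/(\r^n+\f^{n+1})$, so it concludes with a degree shift via ${}^i[\f^n]=0$, whereas you run the spectral sequence on $P^{\otimes n}$ itself and finish with the subobject $(\r^n+\f^{n+1})/\f^{n+1}$ and $\lim^i\f^{n+1}=0$ --- a slightly more economical bookkeeping of exactly the same ingredients.
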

\begin{proof}
Consider the resolution $\bar R \to F_{ab}$ of $G_{ab},$ where
$\bar R=R[F,F]/[F,F].$  Then \hbox{$H_i((\bar R\to
F_{ab})^{\otimes n})=L_i\bigotimes^n G_{ab}.$} Consider the
acyclic complex $(F_{ab}\to F_{ab})^{\otimes n}$ and cokernel of
the embedding $$(\bar R\to F_{ab})^{\otimes n} \mono (F_{ab} \to
F_{ab})^{\otimes n}\epi C_\bullet.$$ Then
$H_{i}(C_\bullet)=L_{i-1}\bigotimes^n G_{ab}.$ Consider the
spectral sequence of higher limits (Corollary
\ref{corollary_spectral}) $\lim^i C_k\Rightarrow
L_{k-i-1}\bigotimes^n G_{ab}.$ If $0\leq k<n,$ then $C_k$ is a
direct sum of quotients of $F_{ab}^{\otimes n}$ by an abelian
group of the form $A\otimes F_{ab}\otimes B.$  Using Lemma
\ref{Lemma_F_ab} and long exact sequences of higher limits, we get
$\lim^i\: C_k=0$ for all $i$ and $k\ne n.$ It follows that $\lim^i
C_n=L_{n-i-1}\bigotimes^nG_{ab}.$ Note that $F_{ab}=\f/\f^2,$
$\bar R=(\r+\f^2)/\f^2,$ and hence, $$F_{ab}^{\otimes n}\cong
\f^n/\f^{n+1},\ \bar R^{\otimes n}\cong
(\r^n+\f^{n+1})/\f^{n+1}.$$ Then $C_n=F_{ab}^{\otimes n}/\bar
R^{\otimes n}=\f^n/(\r^n+\f^{n+1}).$ Since ${}^i[\f^n]=0,$ we
obtain $\lim^i C_n={}^{i+1}[\r^n+\f^{n+1}].$
\end{proof}

\subsection{Other computations}

\begin{Proposition} For any $n\geq 1$ there is a polynomial ${\bf fr}$-code for $I^{\otimes_{\Z G} n}$:
$$I^{\otimes_{\Z G}n}\cong \left[{\sum}_{i=1}^n \f^{i-1}\r\f^{n-i}\right].$$
Moreover, ${}^i[\sum \f^{i-1}\r\f^{n-i}]=0$ for $i\ne 1.$
\end{Proposition}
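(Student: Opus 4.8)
The plan is to compute the higher limits of the functorial ideal ${\bf x}:=\sum_{i=1}^n \f^{i-1}\r\f^{n-i}$ by comparing it with the free ideal $\f^n$, whose higher limits all vanish, and by identifying the quotient $\f^n/{\bf x}$ with a \emph{constant} functor on ${\sf Pres}(G)$. Everything is driven by the long exact sequence of higher limits attached to the short exact sequence of functorial ideals
$$
0 \longrightarrow {\bf x} \longrightarrow \f^n \longrightarrow \f^n/{\bf x} \longrightarrow 0,
$$
so the whole problem reduces to understanding the two outer terms.

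First I would show $\lim^i \f^n = 0$ for all $i\ge 0$. Since $\Z[F]/\f\cong \Z$ is free abelian, Lemma \ref{L5} gives $\f^n=\f^{n-1}\f\cong \f^{n-1}\otimes_{\Z[F]}\f$; writing this as $\f^{n-1}\otimes_{\Z[F]}\f\otimes_{\Z[F]}\Z[F]$ with the flat (indeed free) factor $\f^{n-1}$, Proposition \ref{Proposition_I(F)tesor} shows $\f^n$ is $\infty$-monoadditive, whence Corollary \ref{Corollary_infty_monoadditive} yields $\lim^i \f^n=0$ for every $i$, including $i=0$. Thus the long exact sequence collapses to natural isomorphisms $\lim^{i+1}{\bf x}\cong \lim^i(\f^n/{\bf x})$ for all $i$, together with $\lim^0{\bf x}=0$.

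The heart of the argument is the identification $\f^n/{\bf x}\cong I^{\otimes_{\Z G}n}$ of functors, where the right-hand side is the \emph{constant} functor (it depends only on $G$, not on the chosen $\pi:F\epi G$). I would prove this by induction on $n$. The base case is $\f/\r\cong I$, the isomorphism induced by $\Z[F]\epi \Z G$. For the inductive step apply Lemma \ref{lemma49} with $\a=\f^{n-1}$, $\a'={\bf x}_{n-1}:=\sum_{i=1}^{n-1}\f^{i-1}\r\f^{n-1-i}$, $\b=\f$, $\b'=\r$; its hypothesis ${\sf Tor}(\Z[F]/\f^{n-1},\Z[F]/\f)=0$ holds because $\Z[F]/\f\cong\Z$ is free abelian. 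Since ${\bf x}_{n-1}\f+\f^{n-1}\r={\bf x}$, this gives
$$
\f^n/{\bf x}\cong (\f^{n-1}/{\bf x}_{n-1})\otimes_{\Z[F]}(\f/\r)\cong (\f/\r)^{\otimes_{\Z[F]}n}.
$$
Finally, each factor $\f/\r\cong I$ is annihilated by $\r$, so the tensor product over $\Z[F]$ collapses to the tensor product over $\Z G=\Z[F]/\r$, giving $(\f/\r)^{\otimes_{\Z[F]}n}\cong I^{\otimes_{\Z G}n}$; all the maps involved are natural in $\pi$, so the quotient functor is genuinely constant.

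To conclude, since ${\sf Pres}(G)$ is contractible, the higher limits of the constant functor $I^{\otimes_{\Z G}n}$ vanish, so $\lim^0(\f^n/{\bf x})=I^{\otimes_{\Z G}n}$ and $\lim^i(\f^n/{\bf x})=0$ for $i>0$. Feeding this into the isomorphisms $\lim^{i+1}{\bf x}\cong\lim^i(\f^n/{\bf x})$ produces $\lim^1{\bf x}\cong I^{\otimes_{\Z G}n}$, i.e. the code $I^{\otimes_{\Z G}n}\cong[{\bf x}]$, together with $\lim^i{\bf x}=0$ for $i\ne 1$. The only real obstacle is the inductive identification of the quotient: one must check that the ${\sf Tor}$-vanishing hypothesis of Lemma \ref{lemma49} holds at each stage and that the chain of isomorphisms is natural enough to exhibit $\f^n/{\bf x}$ as a constant functor; the vanishing statements for $\f^n$ are then routine consequences of the monoadditivity machinery.
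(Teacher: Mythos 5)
Your proof is correct and takes essentially the same route as the paper: identify $\f^n/\sum_{i=1}^n\f^{i-1}\r\f^{n-i}$ with the constant functor $I^{\otimes_{\Z G}n}$ (the paper does this tersely via $I\cong\f/\r$, you make it precise by iterating Lemma \ref{lemma49}), then feed the short exact sequence $\sum\f^{i-1}\r\f^{n-i}\mono\f^n\epi I^{\otimes n}$ into the long exact sequence of higher limits, using $\lim^i\f^n=0$ for all $i$. The paper's two-sentence proof leaves exactly these verifications (the ${\sf Tor}$-vanishing hypotheses, the flatness/monoadditivity argument for $\lim^i\f^n=0$, and the behaviour of limits of constant functors over the contractible category ${\sf Pres}(G)$) implicit, and your write-up supplies them correctly.
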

\begin{proof}
Since $I\cong \f/\r,$  we get $I^{\otimes_{\Z [F]} n}\cong
\f^n/(\sum \f^{i-1}\r\f^{n-i}).$ Then using the short exact
sequence $\sum \f^{i-1}\r\f^{n-i}\mono \f^n \epi I^{\otimes n}$ we
obtain the required isomorphism.
\end{proof}
\begin{Proposition}
There is a polynomial ${\bf fr}$-code for the functor $I/I^n:$
$$I/I^n\cong [\r+\f^n].$$ Moreover, ${}^i[\r+\f^n]=0$ for $i\ne 1,$ and the projection $I\epi I/I^n$ is induced by the embedding $\r \subset \r+\f^n.$
 \end{Proposition}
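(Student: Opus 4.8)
The plan is to follow the same strategy as the preceding computations: express $I/I^n$ as a quotient of a functorial ideal whose higher limits vanish, and then read off the $\lim^\ast$ from the long exact sequence.

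First I would identify the quotient. Since $\Z G=\Z[F]/\r$ and the surjection $\Z[F]\epi \Z G$ carries $\f$ onto $I$ with kernel $\r$, we have $I\cong \f/\r$ and $I^n\cong(\f^n+\r)/\r$, whence $I/I^n\cong \f/(\r+\f^n)$; this produces the short exact sequence of representations
$$(\r+\f^n)\ \mono\ \f\ \epi\ I/I^n.$$
The crucial observation is that, because the isomorphism $\f/(\r+\f^n)\cong I(G)/I(G)^n$ is compatible with every morphism of presentations, the functor $I/I^n$ is naturally isomorphic to the \emph{constant} functor with value $I(G)/I(G)^n$ on ${\sf Pres}(G)$.

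Next I would run the long exact sequence of higher limits attached to the displayed sequence. By Corollary \ref{Lemma_F_ab} (applied with the constant coefficient functor $\Z$, so that $\f\otimes\Z=\f$) we have $\lim^i\f=0$ for every $i\ge 0$, so the connecting maps yield isomorphisms $\lim^i(I/I^n)\cong \lim^{i+1}(\r+\f^n)$ for all $i\ge 0$ and force $\lim^0(\r+\f^n)=0$. On the other hand, ${\sf Pres}(G)$ is contractible, so for the constant functor $I/I^n$ one has $\lim^0(I/I^n)=I/I^n$ and $\lim^i(I/I^n)=0$ for $i>0$. Combining the two gives ${}^1[\r+\f^n]=\lim^1(\r+\f^n)\cong I/I^n$ together with $\lim^i(\r+\f^n)=0$ for $i\ne 1$, which is the asserted code and its vanishing.

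For the final assertion I would compare this with the code $I\cong[\r]$ from the Example, which comes from $\r\mono \f\epi I$. The inclusion $\r\subset \r+\f^n$, the identity of $\f$, and the projection $I\epi I/I^n$ form a morphism from the sequence $\r\mono\f\epi I$ to the sequence $(\r+\f^n)\mono\f\epi I/I^n$; applying $\lim^\ast$ and invoking naturality of the connecting homomorphism gives a commutative square whose two horizontal arrows are exactly the connecting isomorphisms realising the codes $I\cong[\r]$ and $I/I^n\cong[\r+\f^n]$. Consequently the map ${}^1[\r]\to{}^1[\r+\f^n]$ induced by $\r\subset \r+\f^n$ is identified with $\lim^0$ of the projection $I\epi I/I^n$, and since both functors are constant this $\lim^0$ is the projection itself. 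The only genuinely non-formal steps are the identity $I^n=(\f^n+\r)/\r$ and the verification that $\f/(\r+\f^n)$ is a constant functor; once these are secured, everything else is a diagram chase through the long exact sequence.
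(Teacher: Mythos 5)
Your proposal is correct and is essentially identical to the paper's proof: the paper applies the long exact sequence of higher limits to the two rows of the commutative diagram formed by $\r\mono\f\epi I(G)$ and $(\r+\f^n)\mono\f\epi I(G)/I(G)^n$ (with the identity on $\f$), using $\lim^i\f=0$ and the contractibility of ${\sf Pres}(G)$, exactly as you do. Your write-up merely makes explicit the identification $I/I^n\cong\f/(\r+\f^n)$ and the naturality of the connecting homomorphism, which the paper leaves implicit in the diagram.
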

\begin{proof}
This follows from the long exact sequence of higher limits applied to rows of the diagram
$$
\xymatrix{
0\ar[r] & \r\ar[r]\ar[d] & \f\ar[r]\ar@{=}[d] & I(G)\ar[r]\ar[d] & 0\\
0\ar[r] & \r+\f^n\ar[r] & \f \ar[r] & I(G)/I(G)^n\ar[r] & 0.
}$$
\end{proof}

\begin{Proposition} There is a polynomial ${\bf fr}$-code for the functor $G_{ab}^{\otimes n}:$
$$G_{ab}^{\otimes n}=[{\sum}_{i=1}^n \f^{i-1}\r\f^{n-i}+\f^{n+1}].$$
Moreover, ${}^i[{\sum}_{i=1}^n \f^{i-1}\r\f^{n-i}+\f^{n+1}]=0$ for $i\ne 1,$ and the map $I(G)^{\otimes n} \to G_{ab}^{\otimes n}$ is induced by the embedding ${\sum}_{i=1}^n \f^{i-1}\r\f^{n-i}\subset {\sum}_{i=1}^n \f^{i-1}\r\f^{n-i}+\f^{n+1}.$
\end{Proposition}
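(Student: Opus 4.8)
The plan is to reduce everything to a single objectwise identification of functorial ideals, after which the passage to higher limits is formal. Write $J=\sum_{i=1}^n\f^{i-1}\r\f^{n-i}$ and $K=J+\f^{n+1}$, so that the statement asserts $G_{ab}^{\otimes n}\cong[K]$, together with ${}^i[K]=0$ for $i\neq 1$ and a description of the map out of $I(G)^{\otimes n}$. The core of the argument is the claim that there is a natural isomorphism of representations $G_{ab}^{\otimes n}\cong\f^n/K$, compatible with the projection from $I(G)^{\otimes n}$; granting this, the rest runs exactly as in the preceding propositions of this subsection.

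To establish the objectwise isomorphism I would build on the $\mathbf{fr}$-code for $I(G)^{\otimes n}$ proved just above, which supplies a natural isomorphism $I(G)^{\otimes n}\cong\f^n/J$ realised by lifting tensor factors into $\f$ and multiplying. Since $G_{ab}=I(G)/I(G)^2$ carries the trivial $G$-action, its $n$-fold tensor power over $\Z$ agrees with the tensor power over $\Z[F]$, and the surjection $I(G)\epi I(G)/I(G)^2=G_{ab}$ induces a surjection $I(G)^{\otimes n}\epi G_{ab}^{\otimes n}$. By right exactness its kernel is $\sum_{i=1}^n I(G)^{\otimes(i-1)}\otimes I(G)^2\otimes I(G)^{\otimes(n-i)}$. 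Because $I(G)^2$ is the image of $\f^2$ in $I(G)=\f/\r$, each summand lifts, under $I(G)^{\otimes n}\cong\f^n/J$, into $\f^{i-1}(\f^2+\r)\f^{n-i}=\f^{n+1}+\f^{i-1}\r\f^{n-i}$, which modulo $J$ equals $\f^{n+1}\bmod J$. Hence the kernel is exactly $(\f^{n+1}+J)/J$ and $G_{ab}^{\otimes n}\cong\f^n/(J+\f^{n+1})=\f^n/K$. Alternatively one can reach the same isomorphism directly by iterating Lemma \ref{lemma49} starting from $G_{ab}=\f/(\r+\f^2)$, using at each step that $\Z[F]/\f=\Z$ is torsion free so the relevant ${\sf Tor}$ groups vanish.

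It then remains to compute $\lim^\ast K$. The short exact sequence $K\mono\f^n\epi G_{ab}^{\otimes n}$ together with $\lim^i\f^n=0$ for all $i$ (Lemma \ref{L7}, with $\f^n=\f^{n-1}\f\,\Z[F]$ and $\Z[F]/\Z[F]=0$ torsion free, as already used for $I(G)^{\otimes n}$) yields, via the long exact sequence of higher limits, natural isomorphisms $\lim^i G_{ab}^{\otimes n}\cong\lim^{i+1}K$ and the vanishing $\lim^0 K=0$. Since $G_{ab}^{\otimes n}$ depends only on $G$ it is a constant representation on the contractible category ${\sf Pres}(G)$, so $\lim^0 G_{ab}^{\otimes n}=G_{ab}^{\otimes n}$ and $\lim^{>0}G_{ab}^{\otimes n}=0$; feeding this back gives $[K]=G_{ab}^{\otimes n}$ and ${}^i[K]=0$ for $i\neq1$. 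Finally, the asserted description of the map $I(G)^{\otimes n}\to G_{ab}^{\otimes n}$ follows from naturality of the long exact sequence applied to the morphism of short exact sequences $(J\mono\f^n\epi I(G)^{\otimes n})\to(K\mono\f^n\epi G_{ab}^{\otimes n})$ with identity in the middle term, which identifies it with the map $[J]\to[K]$ induced by $J\subset K$. The only genuinely delicate point is the kernel computation in the objectwise step: one must track which contributions already lie in $J$, so that the kernel collapses precisely to $\f^{n+1}$ modulo $J$ and not to something larger. Once that bookkeeping is in place the homological part is routine.
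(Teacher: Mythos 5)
Your proof is correct, and its homological half is exactly the paper's argument: the paper also uses the morphism of short exact sequences $(J\mono\f^n\epi I(G)^{\otimes n})\to(K\mono\f^n\epi G_{ab}^{\otimes n})$ (identity on $\f^n$), the vanishing $\lim^i\f^n=0$, and the constancy of the quotient representations to read off both the isomorphism $[K]\cong G_{ab}^{\otimes n}$, the vanishing of ${}^i[K]$ for $i\neq 1$, and the description of $I(G)^{\otimes n}\to G_{ab}^{\otimes n}$ via $J\subset K$ (here $J=\sum_{i=1}^n\f^{i-1}\r\f^{n-i}$, $K=J+\f^{n+1}$). The only real divergence is the objectwise identification $G_{ab}^{\otimes n}\cong\f^n/K$: the paper obtains it in one line from $G_{ab}=\f/(\r+\f^2)$ by iterating Lemma \ref{lemma49} -- precisely the alternative you mention at the end of your second paragraph -- whereas your primary route bootstraps from the previous proposition's isomorphism $I(G)^{\otimes n}\cong\f^n/J$ and computes $\ker\bigl(I(G)^{\otimes n}\epi G_{ab}^{\otimes n}\bigr)$ by right exactness, lifting each contribution $\f^{i-1}(\f^2+\r)\f^{n-i}$ and observing it collapses to $\f^{n+1}$ modulo $J$. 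Both are sound; the paper's version is shorter and self-contained, while yours reuses the preceding proposition and makes the compatibility of the surjection with the inclusion of ideals transparent before any limits are taken. Two minor points of care: the kernel under right exactness is the sum of the \emph{images} of the groups $I(G)^{\otimes(i-1)}\otimes I(G)^2\otimes I(G)^{\otimes(n-i)}$ in $I(G)^{\otimes n}$ (the natural maps need not be injective), which is in fact how you use it in the lifting step; and your identification of $\Z$-tensor powers with $\Z[F]$-tensor powers for trivial modules is the needed glue between the two propositions and is correctly justified.
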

\begin{proof}
Since $G_{ab}=\f/(\r+\f^2),$ we get $G_{ab}^{\otimes n}=\f^n/({\sum}_{i=1}^n \f^{i-1}\r\f^{n-i}+\f^{n+1}).$ Then the statement follows from the diagram
$$\xymatrix{
0\ar[r] & {\sum}_{i=1}^n \f^{i-1}\r\f^{n-i} \ar[r]\ar[d] & \f^n\ar[r]\ar@{=}[d] & I(G)^{\otimes_{\Z [F]}n}\ar[r]\ar[d] & 0\\
0\ar[r] & {\sum}_{i=1}^n \f^{i-1}\r\f^{n-i}+\f^{n+1}\ar[r] & \f^n\ar[r] & G_{ab}^{\otimes n}\ar[r] & 0.
}$$
\end{proof}

Now lets consider more non-standard example, namely that mentioned
in the introduction. There is a ${\bf fr}$-code
\begin{equation}\label{theiso}
{\sf Tor}(G_{ab}\otimes G_{ab}, G_{ab})=[{\bf rfr+frr+ffff}].
\end{equation}
To see (\ref{theiso}), observe first that these is a natural
isomorphism
$$
G_{ab}^{\otimes 2}=\frac{{\bf f}^2}{{\bf rf+fr+f}^3}
$$
Hence, there is a short exact sequence with free abelian two left
terms:
$$
0\to \frac{{\bf rf+fr+f}^3}{{\bf f}^3}\to \frac{{\bf f}^2}{{\bf
f}^3}\to G_{ab}^{\otimes 2}\to 0.
$$
Tensoring with $G_{ab}=\frac{\bf f}{{\bf r+f}^2},$ we get the
exact sequence
$$
0\to {\sf Tor}(G_{ab}^{\otimes 2}, G_{ab})\to \frac{{\bf
rf+fr+f}^3}{{\bf f}^3}\otimes\frac{\bf f}{{\bf r+f}^2}\to
\frac{{\bf f}^2}{{\bf f}^3}\otimes G_{ab}\to G_{ab}^{\otimes 3}\to
0
$$
By Lemma \ref{lemma49},
$$
\frac{{\bf rf+fr+f}^3}{{\bf f}^3}\otimes\frac{\bf f}{{\bf
r+f}^2}=\frac{{\bf rff+frf+f}^4}{{\bf rfr+frr+f}^4}
$$
Now observe that $\lim^i \frac{{\bf f}^2}{{\bf f}^3}\otimes
G_{ab}=\lim^i F_{ab}^{\otimes 2}\otimes G_{ab}=0,\ i\geq 0$. Hence
$$
{\sf Tor}(G_{ab}^{\otimes 2}, G_{ab})=\lim \frac{{\bf
rff+frf+f}^4}{{\bf rfr+frr+f}^4}.
$$
Now consider the exact sequence
$$
0\to \frac{{\bf rff+frf+f}^4}{{\bf rfr+frr+f}^4}\to \frac{{\bf
f}^3}{{\bf rfr+frr+f}^4}\to \frac{{\bf f}^3}{{\bf rff+frf+f}^4}\to
0
$$
By Lemma \ref{lemma49},
$$
 \frac{{\bf f}^3}{{\bf rff+frf+f}^4}=\frac{{\bf f}^2}{{\bf
 rf+fr+f}^3}\otimes \frac{\bf f}{{\bf f}^2}=G_{ab}^{\otimes
 2}\otimes F_{ab},
$$
hence
$$
\lim^i\frac{{\bf f}^3}{{\bf rff+frf+f}^4}=0,\ i\geq 0.
$$
Therefore,
$$
{\sf Tor}(G_{ab}^{\otimes 2}, G_{ab})=\lim\frac{{\bf
rff+frf+f}^4}{{\bf rfr+frr+f}^4}=\lim^1({\bf rff+frf+f}^4)
$$
and the needed isomorphism (\ref{theiso}) is proved.

Some other examples of description of limits can be found in
\cite{Ivanov-Mikhailov} and \cite{MP}. Here we give some examples
(see \cite{Ivanov-Mikhailov}, \cite{MP})\footnote{We assume that
the commutator brackets are left-normalized. For example,
$[R,R,F]:=[[R,R],F]$.}:
\begin{align*}
& \lim \frac{[F,F]}{[R,R][F,F,F]}=L_1S^2(G_{ab}),\\
& \lim \frac{[F,F,F]}{[R,R,R][F,F,F,F]}=L_2{\mathfrak
L}_s^3(G_{ab}),\\
& \lim \frac{[F,F,F]}{[R,R,F][F,F,F,F]}=L_1S^3(G_{ab}),\\
& \lim \frac{[F,F,F]}{[F,F,R][F,F,F,F]}=0,\\
& \lim \frac{[R,R]}{[R,R,F]}=H_2(G, S^2(I(G))),
\end{align*}
where $S^2,S^3, {\mathfrak L}_s^3$ are functors of symmetric
square, cube and super-Lie-cube respectively. If $G$ is 2-torsion
free then, as it is shown in \cite{Ivanov-Mikhailov},
\begin{align*} &
\lim \frac{[R,R]}{[R,R,F]}=H_4(G;\mathbb Z/2),\\
& \lim^1 \frac{[R,R]}{[R,R,F]}=H_3(G;\mathbb Z/2).
\end{align*}
\section{Intersection games}
Let ${\bf a}_1,\dots, {\bf a}_n,\ n\geq 2$ be ideals in $\mathbb
Z[F]$ which can be presented as monomials in ${\bf f}, {\bf r}$.
There is a natural problem: define a minimal set ${\bf b}_1,\dots,
{\bf
b}_m,\ m\geq 1$ of monomials in ${\bf f}, {\bf r}$ such that:\\ \\
1) All of them lie in intersection ${\bf a}_1\cap \dots \cap {\bf
a}_n$;\\ \\2) Any monomial ${\bf b}$ in ${\bf f}, {\bf r}$ which
lies in ${\bf a}_1\cap \dots \cap {\bf
a}_n$ also lies in some ${\bf b}_i$ as an ideal.\\

After constructing the set of monomials ${\bf b}_1,\dots, {\bf
b}_m,$ if $m\geq 2$, one can repeat this process. In this way we
obtain what we will call {\it generations} of collections of
ideals.

Here is our main example. Take two ideals ${\bf r}, {\bf f}^2$.
One can easily check that any monomial written in terms of ${\bf
f}$ and ${\bf r}$ which lies in ${\bf r}\cap {\bf f}^2$, lies
either in ${\bf rf}$ or in ${\bf fr}$ (or in both). Now we take
monomials from the intersection ${\bf rf}\cap {\bf fr}$ an see
that the minimal set of monomials which covers all of them is
${\bf r}^2, {\bf frf}$. Repeating this process, we obtain the
following {\it generations}: \begin{equation}\label{examplefr}
({\bf r},{\bf f}^2)\rightarrow ({\bf rf}, {\bf fr})\rightarrow
({\bf r}^2, {\bf frf})\rightarrow ({\bf r}^2{\bf f}, {\bf f}{\bf
r}^2)\rightarrow({\bf r}^3,{\bf fr}^2{\bf f})\rightarrow\dots
\end{equation}
In this example, we obtain the following isomorphism which follows
from Gruenberg resolution:
$$
\frac{\text{intersection of ideals in}\ i\text{th
generation}}{\text{sum of ideals in}\ (i+1)\text{st
generation}}=H_{i+1}(G).
$$

We collect some examples of origins and generators in the
following table:

\vspace{.5cm}
\begin{tabular}{|r||c|c|c|c|c|}
\hline {\sf origin} & {\bf r}, {\bf ff} & {\bf rr}, {\bf fff} & {\bf r}, {\bf fff}\\
\hline {\sf 2 generation} & {\bf rf}, {\bf fr} & {\bf rrf}, {\bf rfr}, {\bf frr} & {\bf rff}, {\bf frf}, {\bf ffr}\\
\hline {\sf 3 generation} & {\bf rr}, {\bf frf} & {\bf rrr}, {\bf frfrf} & {\bf rrr}, {\bf frrf}, {\bf rfrf}, {\bf frfr}, {\bf ffrff}\\
\hline {\sf 4 generation} & {\bf rrf}, {\bf frr} & {\bf frrrf}, {\bf frfrr}, {\bf rrfrf} & {\bf frrrf}, {\bf frfrfr}, {\bf rfrfrf}\\
\hline {\sf 5 generation} & {\bf rrr}, {\bf frrf} & {\bf rrrrr}, {\bf frfrfrf} & {\bf rrrrrr}, {\bf frfrfrf}, {\bf rfrrrfr}, {\bf frrrrfr}, {\bf rfrrrrf}\\
\hline
\end{tabular}
\vspace{.5cm}

In the above example (\ref{examplefr}), we see that
\begin{equation}\label{shift} \lim^{j+1}\ (\text{sum of ideals in the}\
(i+1)\text{st generation})=\lim^j\ (\text{sum of ideals in the}\
i\text{th generation})
\end{equation}
for $j\geq 1$. The reason for that connection is simple. All
generations consists of two ideals, say $({\bf a}_i, {\bf b}_i),\
i=1,2,\dots$ and there are natural short exact sequences:
$$
0\to \frac{{\bf a}_i\cap {\bf b}_i}{{\bf a}_{i+1}+{\bf
b}_{i+1}}\to \frac{{\bf f}}{{\bf a}_{i+1}+{\bf b}_{i+1}}\to
\frac{\bf f}{{\bf a}_i}\oplus \frac{\bf f}{{\bf b}_i}\to \frac{\bf
f}{{\bf a}_i+{\bf b}_i}\to 0
$$
Now (\ref{shift}) follows from the following properties:
$$
\lim^j\frac{{\bf a}_i\cap {\bf b}_i}{{\bf a}_{i+1}+{\bf
b}_{i+1}}=0,\ j>0,\ \lim^j\left(\frac{\bf f}{{\bf a}_i}\oplus
\frac{\bf f}{{\bf b}_i}\right)=0,\ j\geq 0.
$$
It would be interesting to explore general connections between
derived limits and above intersection game.

\newpage
\section{Table}
In the next table we summarize the computations of derived limits
for certain ${\bf fr}$-sentences. In this table we will denote the
augmentation ideal of $G$ by ${\sf g}$. \vspace{.5cm}

\begin{tabular}{|r||c|c|c|c|c|c|c|c}
\hline  {\sf fr-code} & ${\sf lim}^1$&${\sf lim}^2$&${\sf
lim^3}$&${\sf lim^4}$
\\ \hline {\sf f} & 0 & 0 & 0 & 0
\\ \hline {\sf r} & {\sf g} & 0 & 0 & 0
\\ \hline {\sf rr} & 0 & ${\sf g}\otimes {\sf g}$ & 0 & 0
\\ \hline {\sf rrr} & 0 & 0 & ${\sf g}\otimes {\sf g}\otimes {\sf g}$ & 0
\\ \hline {\sf rrrr} & 0 & 0 & 0 & ${\sf g}\otimes {\sf g}\otimes
{\sf g}\otimes {\sf g}$
\\ \hline {\sf fr+rf} & ${\sf g}\otimes_{\mathbb Z[G]}{\sf g}$ & 0 & 0 & 0
\\ \hline {\sf ffr+frf+rff} & ${\sf g}\otimes_{\mathbb Z[G]}{\sf g}\otimes_{\mathbb Z[G]}{\sf g}$ & 0 & 0 & 0
\\ \hline {\sf r+ff} & $G_{ab}$ & 0 & 0 & 0
\\ \hline {\sf r+fff} & ${\sf g}/{\sf g}^3$ & 0 & 0 & 0
\\ \hline {\sf rf+ffr} & ${\sf g}^2\otimes_{\mathbb Z[G]}{\sf g}$ & 0 & 0 & 0
\\ \hline {\sf rf+fffr} & ${\sf g}^3\otimes_{\mathbb Z[G]}{\sf g}$ & 0 & 0 & 0
\\ \hline {\sf rfr+frr+ffff} & ${\sf Tor}(G_{ab}\otimes G_{ab},G_{ab})$ & 0 & 0 & 0
\\ \hline {\sf fr+rf+fff} & $G_{ab}\otimes G_{ab}$ & 0 & 0 & 0
\\ \hline {\sf rff+frf+rff+ffff} & $G_{ab}\otimes G_{ab}\otimes G_{ab}$ & 0 & 0 & 0
\\ \hline {\sf rr+fff} & ${\sf Tor}(G_{ab},G_{ab})$ & $G_{ab}\otimes G_{ab}$ & 0 & 0
\\ \hline {\sf rrr+ffff} & $L_2\otimes^3(G_{ab})$ & $L_1\otimes^3(G_{ab})$ & $G_{ab}\otimes G_{ab}\otimes G_{ab}$ & 0
\\ \hline {\sf rrrr+fffff} & $L_3\otimes^4(G_{ab})$ & $L_2\otimes^4(G_{ab})$ & $L_1\otimes^4(G_{ab})$ &
$G_{ab}\otimes G_{ab}\otimes G_{ab}\otimes G_{ab}$
\\ \hline {\sf rr+frf} & $H_3(G)$ & ${\sf g}\otimes_{\mathbb Z[G]}{\sf g}$ & 0 & 0
\\ \hline {\sf rrf+frr} & $H_4(G)$ & $H_3(G)$ & ${\sf g}\otimes_{\mathbb Z[G]}{\sf g}$ &
0
\\ \hline {\sf rrr+frrf} & $H_5(G)$ & $H_4(G)$ & $H_3(G)$ &
${\sf g}\otimes_{\mathbb Z[G]}{\sf g}$
\\ \hline {\sf rrrf+frrr} & $H_6(G)$ & $H_5(G)$ & $H_4(G)$ &
$H_3(G)$
\\ \hline {\sf rf+ffr+ffff} & ${\sf g}^2/{\sf g}^3\otimes G_{ab}$ & 0 & 0 & 0
\\ \hline {\sf rfff+rfr+rrf} & 0 & ${\sf g}\otimes G_{ab}\otimes G_{ab}$ & 0 & 0
\\ \hline {\sf rrfff+rrfr+rrrf} & 0 & 0 & ${\sf g}\otimes {\sf g}\otimes G_{ab}\otimes G_{ab}$ & 0
\\ \hline
\end{tabular}

\end{document}